\newtheorem{theorem}			     {Theorem} [section]
\newtheorem{proposition}[theorem]	 {Proposition}	
\newtheorem{lemma}	      [theorem]  {Lemma}		
\theoremstyle{definition}
\newtheorem{remark} {Remark}
\newtheorem{conjecture}[theorem]{Conjecture}
\newcommand{\C}{\mathbb{C}}
\newcommand{\R}{\mathbb{R}}
\newcommand{\Or}{\mathcal{O}}
\newcommand{\re}{\text{\upshape Re\,}}
\newcommand{\im}{\text{\upshape Im\,}}
\newcommand{\Ai}{{\rm Ai}}
\numberwithin{equation}{section}
\def\ds{\displaystyle}
\def\bigO{{\cal O}}
\tikzset{->-/.style={decoration={
				markings,
				mark=at position #1 with {\arrow[thick]{>}}},postaction={decorate}}}
	\tikzset{-<-/.style={decoration={
				markings,
				mark=at position #1 with {\arrow[thick]{<}}},postaction={decorate}}}
\begin{document}
\title{Gap probabilities in the bulk of the Airy process}
\author{Elliot Blackstone, Christophe Charlier, Jonatan Lenells \footnote{Department of Mathematics, KTH Royal Institute of Technology, Lindstedtsv\"{a}gen 25, SE-114 28 Stockholm, Sweden. The work of all three authors is supported by the European Research Council, Grant Agreement No. 682537. Lenells also acknowledges support from the G\"oran Gustafsson Foundation, the Swedish Research Council, Grant No. 2015-05430, and the Ruth and Nils-Erik Stenb\"ack Foundation.
E-mail: elliotb@kth.se, cchar@kth.se, jlenells@kth.se.}}
\maketitle

\begin{abstract}
We consider the probability that no points lie on $g$ large intervals in the bulk of the Airy point process. We make a conjecture for all the terms in the asymptotics up to and including the oscillations of order $1$, and we prove this conjecture for $g=1$.
\end{abstract}
 
%\vspace{0.2cm}\hspace{-0.5cm}AMS subject classiciation: 41A60, 35Q15, 

%\tableofcontents

\section{Introduction and statement of results}
The Airy point process is a determinantal point process on $\mathbb{R}$ which plays an important role in many seemingly unrelated topics: for example, it models the largest eigenvalues of certain large random matrices \cite{BEY, DeiftGioev,TracyWidom}, the fluctuations at the solid-liquid boundary of certain tiling models \cite{Johansson}, the largest parts of Young diagrams with respect to the Plancherel measure \cite{BaikDeiftRains, BOO}, the fluctuations of the length of the longest increasing subsequence of a random permutation of $\{1,2,\ldots,N\}$, $N \gg 1$ \cite{BaikDeiftJohansson}, and the Brownian particles near the edge-curve in non-intersecting Brownian paths \cite{ADM08}. It has also recently been shown to appear at the liquid-gas boundary of the two-periodic Aztec diamond \cite{BCJ18}. The correlation kernel of the Airy point process is given by
\begin{equation}\label{Airy kernel}
K^\Ai(u, v) = \frac{ \Ai(u) \Ai'(v) - \Ai'(u) \Ai(v) }{ u - v }, \qquad u,v \in \mathbb{R},
\end{equation}
where $\Ai$ denotes the Airy function. There are infinitely many random points which are almost surely all distinct, and there is almost surely a largest point. The Airy process is rigid, in the sense that given a bounded Borel set $A$, the restriction of any point configuration to $\mathbb{R}\setminus A$ almost surely determines the number of points in $A$, see \cite{Bufetov}. We also mention that the maximum fluctuations of the points have been studied in \cite{CorwinGhosal,Zhong} and in \cite[Theorem 1.4]{CharlierClaeys}. 

\medskip

A \textit{gap probability} refers to the probability that a given region is free from points. It is well-known from the general theory of determinantal point processes (see e.g. \cite{Soshnikov}) that gap probabilities are Fredholm determinants. In particular, if $\mathcal{I} \subset \R$ is a finite union of intervals, then the probability of finding no points in $\mathcal{I}$ for the Airy process is given by
\begin{align}\label{generating}%\label{general gap fredholm}
F(\mathcal{I}):=\mathbb{P}(\mbox{no points lie in }\mathcal{I}) = \det(I-\mathcal{K}^{\Ai}\chi_{\mathcal{I}}),
\end{align}
where $\mathcal{K}^{\Ai}$ is the integral operator whose kernel is $K^{\Ai}$, and $\chi_{\mathcal{I}}$ is the projection operator. The properties of $F(\mathcal{I})$ depend on the structure of $\mathcal{I}$: given $g \in \mathbb{N}=\{0,1,2,...\}$ and $\tilde{x}_{0},x_{1},\ldots,x_{2g} \in \mathbb{R}$ such that $x_{2g}<...<x_{1}<\tilde{x}_{0}$, we distinguish the cases
\begin{align*}
& \mathcal{I}_{g}^{(e)} = (x_{2g},x_{2g-1})\cup ... \cup (x_{2},x_{1}) \cup (\tilde{x}_{0},+\infty), \\
& \mathcal{I}_{g}^{(b)} = (x_{2g},x_{2g-1})\cup ... \cup (x_{2},x_{1}).
\end{align*}
If $x_{1}<0$, then $F(\mathcal{I}_{g}^{(b)})$ is the probability of finding no points on a union of $g$ intervals in the bulk, while $F(\mathcal{I}_{g}^{(e)})$ is the probability of finding no points on these $g$ intervals and a gap at the edge. Note that the case $\mathcal{I}_{0}^{(b)} = \emptyset$ leads trivially to $F(\mathcal{I}_{0}^{(b)}) = 1$. The distribution $F(\mathcal{I}_{0}^{(e)}) = F((\tilde{x}_{0},+\infty))$ is the well-known Tracy--Widom distribution of the largest point which is naturally expressed in terms of the solution to a Painlev\'{e} II equation \cite{TracyWidom}. More generally, $F(\mathcal{I}_{g}^{(e)})$ (resp. $F(\mathcal{I}_{g}^{(b)})$) can be expressed in terms of a solution to a system of $2g+1$ (resp. $2g$) coupled Painlev\'{e} II equations \cite{ClaeysDoeraene}, see also \cite{XuDai}. 

\medskip

Gap probabilities are transcendental functions, but in certain cases they possess convenient approximations. For example, for small values of $r>0$, $F(r \mathcal{I})$ represents the probability of a small gap, and can be estimated from its Fredholm series representation. A classical and much harder problem is to find \textit{large gap asymptotics}, which are asymptotics for $F(r \mathcal{I})$ as $r \to + \infty$. Large $r$ asymptotics for $F(r\mathcal{I}_{0}^{(e)})$ and $F(r\mathcal{I}_{1}^{(e)})$ have been analysed in \cite{BBD, DIK, TracyWidom} and \cite{BCL19,KraMarou}, respectively. In this work, we focus on $F(r\mathcal{I}_{g}^{(b)})$ with $g \geq 1$ (the case $g=0$ is trivial, since $F(r\mathcal{I}_{0}^{(b)}) = 1$ for all $r$).

\subsection*{Large gap asymptotics on $g$ intervals in the bulk}
In Conjecture \ref{conj: genus g conjecture} below, we formulate a conjecture for the large $r$ asymptotics of $F(r\mathcal{I}_{g}^{(b)})$ for any finite number of intervals $g \geq 1$, up to and including the oscillations of order $1$. This conjecture has been verified numerically for $g=1,2,3$, and we prove it rigorously for $g=1$.

\medskip

To formulate the conjecture, let $g \geq 1$ be an integer and let $\vec{x} = (x_{1},\ldots,x_{2g}) \in \mathbb{R}^{2g}$ be such that $x_{2g}<...<x_{2}<x_{1}$. Define the square root $\sqrt{\mathcal{R}(z)}$ by
\begin{align*}
& \sqrt{\mathcal{R}(z)}=\prod_{j=0}^{2g}\sqrt{z-x_j},
\end{align*}
where the principal branch is taken for each of the square roots, and $x_{0}=x_{0}(\vec{x})$ is defined below. Let $q_{g+1}=-1$ and define $\{m_{ij}\}_{i,j=1}^g$ and $\{\tilde{m}_i\}_{i=1}^g$ by
\begin{align*}
& m_{ij}=\int_{x_{2i}}^{x_{2i-1}}\frac{s^{j-1}}{\sqrt{\mathcal{R}(s)}}ds, \qquad \tilde{m}_i=-\int_{x_{2i}}^{x_{2i-1}}\frac{q_{g+1}s^{g+1} + q_{g}s^{g}}{\sqrt{\mathcal{R}(s)}}ds, \qquad \text{where} \qquad 
q_{g}=\frac{1}{2}\sum_{j=0}^{2g}x_j.
\end{align*}
The conjecture involves the solution $(x_{0},q_{0},...,q_{g-1}) = (x_{0}(\vec{x}),q_{0}(\vec{x}),...,q_{g-1}(\vec{x})) \in \mathbb{R}^{g+1}$ of the following system of equations:
\begin{align}\label{system for q}
\begin{cases}
\begin{pmatrix} m_{11} & m_{12} & \cdots & m_{1g} \\ m_{21} & m_{22} & \cdots & m_{2g} \\
    \vdots & \vdots & \ddots & \vdots \\ m_{g1} & m_{g2} & \cdots & m_{gg} \end{pmatrix}\begin{pmatrix} q_0 \\ q_1 \\ \vdots \\ q_{g-1} \end{pmatrix}=\begin{pmatrix} \tilde{m}_1 \\ \tilde{m}_2 \\ \vdots \\ \tilde{m}_g \end{pmatrix}, \\[1cm]
\sum_{j=0}^{g+1}q_jx_0^j = 0, \\
x_{0} > \max\{x_{1},0\}.
\end{cases}
\end{align}
Let $\mathcal{A} = \{\vec{x}\in \mathbb{R}^{2g}: x_{2g}<...<x_{2}<x_{1} \mbox{ and there exists } x_{0} \mbox{ satisfying } \eqref{system for q} \}$. For all $g \geq 1$, we expect that $\{\vec{x}\in \mathbb{R}^{2g}: x_{2g}<...<x_{2}<x_{1}<0\}\subset \mathcal{A}$, and we completely characterize $\mathcal{A}$ for $g=1$ in Theorem \ref{thm: main result} below (see also Proposition \ref{prop: x0}). The conjecture also involves the two-sheeted Riemann surface $X$ of genus $g$ associated to $\sqrt{\mathcal{R}(z)}$ where $\sqrt{\mathcal{R}(z)}>0$ for $z\in(x_0,+\infty)$ on the first sheet. We define $A$-cycles and $B$-cycles on $X$ as in Figure \ref{fig:homology} and consider the matrix $\mathbb{A} \in \mathbb{R}^{g\times g}$ given by
\begin{align*}
\mathbb{A}_{jk}:=\oint_{A_j}\frac{s^{k-1}}{\sqrt{\mathcal{R}(s)}}ds = 2 \sum_{l=j}^{g} \int_{x_{2l-1}}^{x_{2l}} \frac{s^{k-1}}{\sqrt{\mathcal{R}(s)}}ds \in \mathbb{R}, \qquad 1 \leq j,k \leq g.
\end{align*}
It follows from the general theory of Riemann surfaces \cite{FarkasKra} that $\mathbb{A}$ is invertible. Consider
\begin{align}\label{def of omegaj}
\begin{pmatrix}
\omega_{1} & \omega_{2} & \cdots & \omega_{g}
\end{pmatrix} = \frac{dz}{\sqrt{\mathcal{R}(z)}} \begin{pmatrix}
1 & z & \cdots & z^{g-1}
\end{pmatrix} \mathbb{A}^{-1}.
\end{align}
By definition, the holomorphic differentials $\omega_k$, $k=1,\dots,g$, satisfy
\begin{align*}
\oint_{A_j}\omega_k =\begin{cases}
1, &j=k, \\
0, &j\neq k,
\end{cases} \qquad 1 \leq j,k \leq g.
\end{align*}
The period matrix $\tau$ defined by
\begin{equation}\label{def of tau genus g}
\tau :=\left[\oint_{B_j}\omega_k\right]_{j,k=1,\dots,g} \in i \, \mathbb{R}^{g \times g}, %= \left[-2\int_{x_{2j-1}}^{x_{2j-2}} \omega_{k,+}(z)dz\right]_{j,k=1,\dots,g} 
\end{equation}
is symmetric with positive definite imaginary part \cite[page 63]{FarkasKra}. The Riemann theta function $\theta:\mathbb{C}^g\to\mathbb{C}$ associated to $\tau$ is defined by
\begin{equation}\label{def of theta genus g}
\theta(\vec{z};\tau)=\theta(\vec{z}):=\sum_{\vec{n}\in\mathbb{Z}^g}e^{i\pi(\vec{n}^t\tau \vec{n}+2\vec{n}^t\vec{z})}, \qquad \vec{z} \in \mathbb{C}^{g},
\end{equation}
where $^{t}$ denotes the transpose operation. This is an entire function in each component of $\vec{z}$ which satisfies
\begin{align}\label{prop of theta genus g}
& \theta(-\vec{z}) = \theta(\vec{z}) \qquad \mbox{ and } \qquad \theta(\vec{z}+\vec{\lambda}'+\tau\vec{\lambda})=e^{-i\pi(2\vec{\lambda}^t\vec{z}+\vec{\lambda}^t\tau\vec{\lambda})}\theta(\vec{z})
\end{align}
for all $\vec{z}\in \mathbb{C}^{g}$ and $\vec{\lambda}, \vec{\lambda}'\in\mathbb{Z}^g$.
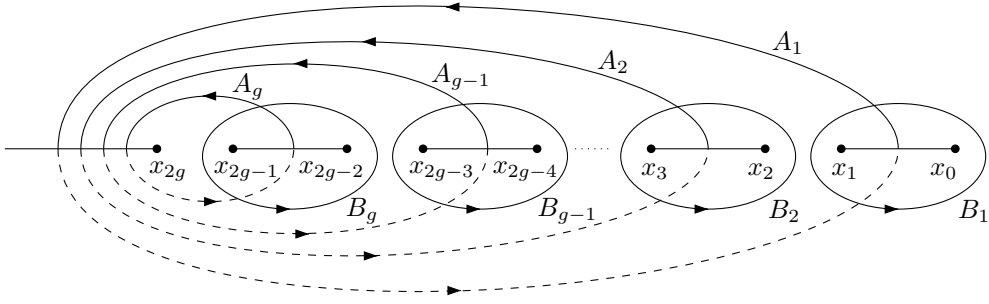
\begin{figure}
\vspace{-2cm}
\centering
\begin{tikzpicture}

%create intervals -\infty to x_{2g}, x_{2g-1} to x_{2g-2}, x_{2g-3} to x_{2g-4}, ... , x_3 to x_2, x_1 to x_0
\draw (-8,0) -- (-6,0);
\draw (-5,0) -- (-3.5,0);
\draw (-2.5,0) -- (-1,0);
\draw[dotted] (-0.5,0) -- (0,0);
\draw (0.5,0) -- (2,0);
\draw (3,0) -- (4.5,0);

%dots at x_j's from left to right 
\draw[fill] (-6,0) circle (0.05cm);
\draw[fill] (-5,0) circle (0.05cm);
\draw[fill] (-3.5,0) circle (0.05cm);
\draw[fill] (-2.5,0) circle (0.05cm);
\draw[fill] (-1,0) circle (0.05cm);
\draw[fill] (0.5,0) circle (0.05cm);
\draw[fill] (2,0) circle (0.05cm);
\draw[fill] (3,0) circle (0.05cm);
\draw[fill] (4.5,0) circle (0.05cm);

%label x_j'2 from left to right
\node at (-5.85,-0.3) {$x_{2g}$};
\node at (-4.82,-0.3) {$x_{2g-1}$};
\node at (-3.70,-0.3) {$x_{2g-2}$};
\node at (-2.25,-0.3) {$x_{2g-3}$};
\node at (-1.15,-0.3) {$x_{2g-4}$};
\node at (0.55,-0.3) {$x_{3}$};
\node at (1.95,-0.3) {$x_{2}$};
\node at (3.05,-0.3) {$x_{1}$};
\node at (4.35,-0.3) {$x_{0}$};

%B cycles from left to right
\draw[black] (-4.25,-0.10) ellipse (1.15cm and 0.70cm);
\draw[black] (-1.75,-0.10) ellipse (1.15cm and 0.70cm);
\draw[black] (1.25,-0.10) ellipse (1.15cm and 0.70cm);
\draw[black] (3.75,-0.10) ellipse (1.15cm and 0.70cm);

%label B cycles from left to right
\node[black] at (-3.3,-0.82) {$B_g$};
\node[black] at (-0.6,-0.82) {$B_{g-1}$};
\node[black] at (2.25,-0.82) {$B_2$};
\node[black] at (4.75,-0.82) {$B_1$};

%arrows for B cycles from left to right
\draw[black,arrows={-Triangle[length=0.18cm,width=0.12cm]}]
(-4.25,-0.8) --  ++(0:0.001);
\draw[black,arrows={-Triangle[length=0.18cm,width=0.12cm]}]
(-1.75,-0.8) --  ++(0:0.001);
\draw[black,arrows={-Triangle[length=0.18cm,width=0.12cm]}]
(1.25,-0.8) --  ++(0:0.001);
\draw[black,arrows={-Triangle[length=0.18cm,width=0.12cm]}]
(3.75,-0.8) --  ++(0:0.001);

%A cycles from left to right
\draw[black] (-4.2,0) arc(0:180:1.1cm and 0.7cm);
\draw[black,dashed] (-6.4,0) arc(180:360:1.1cm and 0.7cm);
\draw[black] (-6.7,0) .. controls (-6.7,1.5) and (-1.65,1.5) .. (-1.65,0);
\draw[black,dashed] (-6.7,0) .. controls (-6.7,-1.5) and (-1.65,-1.5) .. (-1.65,0);
\draw[black] (-7,0) .. controls (-7,2.25) and (1.25,1.5) .. (1.25,0);
\draw[black,dashed] (-7,0) .. controls (-7,-2.25) and (1.25,-1.5) .. (1.25,0);
\draw[black] (-7.3,0) .. controls (-7.3,3.00) and (3.75,2.0) .. (3.75,0);
\draw[black,dashed] (-7.3,0) .. controls (-7.3,-3.00) and (3.75,-2.0) .. (3.75,0);

%label A cycles from left to right
\node[black] at (-4.8,0.82) {$A_g$};
\node[black] at (-2,0.95) {$A_{g-1}$};
\node[black] at (0,1.15) {$A_2$};
\node[black] at (2.3,1.4) {$A_1$};

%arrows for A cycles from left to right
\draw[black,arrows={-Triangle[length=0.18cm,width=0.12cm]}]
(-5.4,0.7) --  ++(0:-0.001);
\draw[black,arrows={-Triangle[length=0.18cm,width=0.12cm]}]
(-5.2,-0.7) --  ++(0:0.001);
\draw[black,arrows={-Triangle[length=0.18cm,width=0.12cm]}]
(-4.2,1.13) --  ++(0:-0.001);
\draw[black,arrows={-Triangle[length=0.18cm,width=0.12cm]}]
(-4,-1.13) --  ++(0:0.001);
\draw[black,arrows={-Triangle[length=0.18cm,width=0.12cm]}]
(-3.3,1.42) --  ++(0:-0.001);
\draw[black,arrows={-Triangle[length=0.18cm,width=0.12cm]}]
(-3.1,-1.42) --  ++(0:0.001);
\draw[black,arrows={-Triangle[length=0.18cm,width=0.12cm]}]
(-2.2,1.88) --  ++(0:-0.001);
\draw[black,arrows={-Triangle[length=0.18cm,width=0.12cm]}]
(-2,-1.88) --  ++(0:0.001);

\end{tikzpicture}
\vspace{-1cm}
\caption{The canonical homology basis on the genus $g$ Riemann surface $X$. The solid parts lie on the first sheet, and the dashed parts lie on the second sheet.}
\label{fig:homology}
\end{figure}

\begin{conjecture}\label{conj: genus g conjecture}
Let $\vec{x} = (x_{1},\ldots,x_{2g}) \in \mathcal{A}$. For $r>0$, define $\vec\nu = \vec{\nu}(r,\vec{x}) = (\nu_{0},\ldots,\nu_{g-1})$ by
\begin{align}
\nu_j=-\frac{\Omega_j}{2\pi}r^\frac{3}{2}, \qquad \mbox{with } \qquad \Omega_j=2i\int_{x_{2j+1}}^{x_{2j}}\frac{\sum_{k=0}^{g+1}q_k\xi^k}{\sqrt{\mathcal{R}(\xi)}_{+}}d\xi>0, \qquad j=0,\dots,g-1. \nonumber
\end{align}
Assume that there exist $\delta_{1},\delta_{2}>0$ such that 
\begin{align}\label{good diophantine prop in thm}
\bigg|\sum_{j=1}^{g}m_{j}\Omega_{j-1}\bigg| \geq \delta_{1}\bigg( \sum_{j=1}^{g}m_{j}^{2}\bigg)^{-\delta_{2}}, \qquad \mbox{for all } m_{1},\ldots,m_{g} \in \mathbb{Z} \mbox{ such that } \sum_{j=1}^{g}m_{j}\Omega_{j-1} \neq 0.
\end{align}
Also suppose that $\Omega_{0},\Omega_{1},\ldots,\Omega_{g-1}$ are rationally independent, that is, if $(n_1, n_2, …, n_g) \in \mathbb{Z}^g$ satisfies
\begin{align}\label{ergo}
\Omega_{0} n_{1}+\Omega_{1} n_{2} + \ldots + \Omega_{g-1} n_{g} = 0,
\end{align}
then $n_{1}=n_{2}= \ldots=n_{g}=0$. As $r \to + \infty$,
\begin{align}
& F(r\vec{x}) = \exp \Big( c\, r^{3} - \frac{3g}{8} \log r + \log \theta(\vec\nu) + C + \bigO ( r^{-\frac{3}{2}} ) \Big), \label{F asymp genus g} 
\end{align}
where $C=C(\vec{x})$ is independent of $r$ and $c=c(\vec{x})$ is given by
\begin{align}
& c = \frac{1}{12}\left[\sum_{j=0}^{2g}x_j^{3}-\sum_{0\leq j<k\leq2g}(x_j^2x_k+x_jx_k^2)-2\sum_{0\leq j<k<l\leq2g}x_jx_kx_l\right]-\frac{2}{3}q_{g-2}-\frac{q_{g-1}}{3}\sum_{j=0}^{2g}x_j. \label{def of C1 genus g}
\end{align}
Furthermore, $\{\vec{x}\in \mathbb{R}^{2g}: x_{2g}<...<x_{2}<x_{1}<0\}\subset \mathcal{A}$.
\end{conjecture}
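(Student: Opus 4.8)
\section*{Proof proposal for Conjecture~\ref{conj: genus g conjecture}}

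The plan is to establish both assertions by a nonlinear steepest descent analysis of the Fredholm determinant $F(r\vec x) = \det(I - \mathcal{K}^{\Ai}\chi_{r\mathcal{I}_g^{(b)}})$, exactly along the lines we carry out for $g=1$ in the proof of Theorem~\ref{thm: main result}. Since $K^{\Ai}$ is an integrable kernel, $F(r\vec x)$ is encoded in a $2\times 2$ Riemann--Hilbert problem (RHP) $Y = Y(z;r,\vec x)$ whose jumps are supported on (the boundary of) $r\mathcal{I}_g^{(b)}$, and a Jimbo--Miwa--Ueno--type differential identity expresses $\partial_r\log F(r\vec x)$ as an explicit functional of $Y$ near the endpoints $rx_1,\dots,rx_{2g}$. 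One then analyses $Y$ as $r\to+\infty$, substitutes the result into this identity, and integrates in $r$; the additive constant $C$ is absorbed into the constant of integration and is not evaluated (as in the statement), while the subleading structure of $Y$ forces the precise shape of \eqref{F asymp genus g}.

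The heart of the analysis is the construction of a $g$-function adapted to the genus-$g$ surface $X$, and the role of the system \eqref{system for q} is precisely to make this possible. Put $G(z)=\int_{x_0}^z \tfrac{\sum_{k=0}^{g+1}q_k\xi^k}{\sqrt{\mathcal{R}(\xi)}}\,d\xi$. The normalization $q_{g+1}=-1$, $q_g=\tfrac12\sum_{j=0}^{2g}x_j$ is what makes $G$ match the Airy growth $\sim \tfrac23 z^{3/2}$ at infinity, and reading off the expansions of $G$ at $\infty$ and at $x_0$ yields exactly formula \eqref{def of C1 genus g} for $c$; the equation $\sum_{j=0}^{g+1}q_jx_0^j=0$ makes $x_0$ a branch point, so that $\sqrt{\mathcal{R}}$ and hence $X$ are well defined; and the linear system with matrix $(m_{ij})$ is the statement that the $A_j$-periods of $dG$ vanish, which guarantees that after $Y\mapsto Y e^{rG\sigma_3}$ (suitably modified near the cuts) the transformed problem has purely oscillatory constant jumps on the bands $(x_{2j},x_{2j-1})$ and exponentially small jumps elsewhere. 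One then opens lenses, builds a global parametrix $P^{(\infty)}$ from the Riemann theta function $\theta$ of $X$ together with a Szeg\H{o}-type scalar factor --- the translation of the theta argument by $\vec\nu(r) = -\tfrac{r^{3/2}}{2\pi}(\Omega_0,\dots,\Omega_{g-1})$, where $\Omega_j$ is the $B_j$-period of $2i\,dG$, is what produces the term $\log\theta(\vec\nu)$ --- installs an Airy parametrix at the soft edge $x_0$ and a confluent hypergeometric parametrix (the sine-kernel endpoint parametrix) at each of the $2g$ endpoints $x_1,\dots,x_{2g}$, whose $2g$ contributions assemble to $-\tfrac{3g}{8}\log r$. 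A small-norm argument then shows that the ratio of $Y$ to the combined parametrix solves an RHP with jump $I + \bigO(r^{-3/2})$ uniformly; feeding this into the differential identity and integrating in $r$ gives \eqref{F asymp genus g}.

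For the inclusion $\{\vec x\in\R^{2g}:x_{2g}<\dots<x_1<0\}\subset\mathcal{A}$, fix a candidate $x_0>0=\max\{x_1,0\}$. On each band $(x_{2i},x_{2i-1})$ the sign of $1/\sqrt{\mathcal{R}(s)}$ is constant, so $m_{ij}=\pm\int_{x_{2i}}^{x_{2i-1}} s^{j-1}|\sqrt{\mathcal{R}(s)}|^{-1}\,ds$; if $\sum_j c_jm_{ij}=0$ for all $i$ then the polynomial $P(s)=\sum_{j=1}^g c_js^{j-1}$ of degree $\le g-1$ integrates to zero against the positive measure $|\sqrt{\mathcal{R}(s)}|^{-1}ds$ on each of the $g$ disjoint bands, hence has a zero in each of them, which is impossible unless $P\equiv 0$. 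Thus $(m_{ij})$ is invertible and $q_0,\dots,q_{g-1}$ are determined as functions of $x_0$, and it remains to solve the single scalar equation $h(x_0):=\sum_{j=0}^{g+1}q_j(x_0)x_0^j=0$ on $(0,+\infty)$. A sign analysis of $h$ near $x_0\to 0^+$ and as $x_0\to+\infty$ (where the leading term $-x_0^{g+1}$ dominates, so $h\to-\infty$), combined with the intermediate value theorem, yields a solution $x_0$; one finally checks that $\Omega_j>0$ there. For $g=1$ this reduces to the explicit complete-elliptic-integral computation of Proposition~\ref{prop: x0}.

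The main obstacle --- and the reason \eqref{F asymp genus g} remains conjectural for $g\ge 2$ --- is twofold. First, the whole steepest descent rests on knowing that \eqref{system for q} admits a solution with all of the sign and branch-cut properties above, equivalently on a workable description of $\mathcal{A}$; for $g=1$ the genus-one theta function is a Jacobi theta function and every quantity becomes an explicit elliptic integral, which is exactly what lets the argument go through, whereas for $g\ge 2$ controlling the geometry of $\mathcal{A}$ and the admissibility of the $g$-function construction is genuinely harder. Second, the Diophantine hypothesis \eqref{good diophantine prop in thm} and the rational independence \eqref{ergo} of the $\Omega_j$ are needed so that the oscillatory pieces of the integrand do not resonate as $r$ varies and so that the expansion of $\log\theta(\vec\nu(r))$ produced on integrating the identity has error uniformly $\bigO(r^{-3/2})$; for $g=1$ these conditions hold automatically (there is a single $\Omega_0>0$), but for $g\ge 2$ converting them into uniform estimates in the genus-$g$ theta setting is the remaining technical hurdle.
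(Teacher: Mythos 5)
Your proposal is an outline of the same strategy the paper uses, but it does not constitute a proof of the statement --- and, to be fair, the paper itself does not prove Conjecture \ref{conj: genus g conjecture} for general $g$ either: it proves only the $g=1$ case (Theorem \ref{thm: main result}) and checks $g=2,3$ numerically. Your sketch of the $g=1$ machinery (IIKS integrable-kernel RHP, differential identity in $r$, $\mathfrak{g}$-function determined by \eqref{system for q}, theta-function global parametrix producing $\log\theta(\vec\nu)$, Airy parametrix at $x_0$, endpoint parametrices at $x_1,\dots,x_{2g}$, small-norm argument, integration in $r$) is faithful to Sections \ref{section:diffid}--\ref{section:integration1}, and you honestly concede that the $g\geq2$ case is not closed. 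But even at the level of $g=1$ the proposal omits what the paper identifies as the hardest part: the subleading coefficients are \emph{not} obtained by "assembling" local contributions from the $2g$ endpoint parametrices into $-\tfrac{3g}{8}\log r$. In the paper the endpoint corrections are oscillatory, theta-dependent quantities, and the $\log r$ coefficient only emerges after the theta identities of Propositions \ref{prop: Abel map and theta function}--\ref{theta'/theta derivative identity}, the averaging lemma (Proposition \ref{prop: 1periodic log int}), and the explicit elliptic-integral computations giving $c_3=1$ and $c_2=-\tfrac38$; this is also exactly where the hypotheses \eqref{good diophantine prop in thm}--\eqref{ergo} would have to be used quantitatively for $g\geq2$. (A small point: the paper uses Bessel parametrices at $x_1,x_2$, not confluent hypergeometric ones.)

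The one part you attempt to prove in full generality --- the inclusion $\{x_{2g}<\dots<x_1<0\}\subset\mathcal{A}$ --- has a concrete gap. Your invertibility argument for $(m_{ij})$ is fine, but the intermediate value theorem step is not: the coefficients $q_0(x_0),\dots,q_{g-1}(x_0)$ and $q_g=\tfrac12\sum_j x_j$ all depend on $x_0$, so the assertion that "the leading term $-x_0^{g+1}$ dominates" as $x_0\to+\infty$ requires growth estimates on $q_j(x_0)$ that you do not supply; more importantly, the sign of $h$ as $x_0\to\max\{x_1,0\}^+$ is never analyzed, and this is precisely the delicate point. For $g=1$ the paper's Proposition \ref{prop: x0} must work to show $\mathcal{F}(0)<0$ (via the antisymmetry of $s\mapsto (s-x_*)/\sqrt{(x_1-s)(s-x_2)}$ about $x_*=\tfrac12(x_1+x_2)$ combined with monotonicity of $\sqrt{-s}$), and the case $x_1\geq0$ shows the sign can fail (a solution exists only when $x_2<-2x_1$), so a hand-waved "sign analysis" cannot be taken for granted. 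Uniqueness of $x_0$, the positivity of all $\Omega_j$, and the further properties of the $\mathfrak{g}$-function needed for the lens openings (the analogue of Lemma \ref{g_lemma}, item 4) are likewise asserted rather than proved. So the proposal should be read as a correct roadmap consistent with the paper, not as a proof of the conjecture.
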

\begin{remark}
If condition \eqref{good diophantine prop in thm} fails, then the error term $\bigO(r^{-\frac{3}{2}})$ in \eqref{F asymp genus g} must be replaced by $o(\log r)$ as in \cite{DIZ}. On the other hand, if condition \eqref{ergo} does not hold, from \cite{DIZ} and \cite[Theorems 1.1--1.4]{BCL20Bessel} we expect that the log-coefficient $-\frac{3g}{8}$ should be replaced by a rather complicated hyperelliptic integral. Again from \cite{DIZ,BCL20Bessel}, we expect the map $(x_{1},\ldots,x_{2g}) \to (\Omega_{0},\ldots,\Omega_{g-1})$ to contain open balls in $(0,+\infty)^{g}$. Since the set $\{(\Omega_{0},\ldots,\Omega_{g-1})\} \subset (0,+\infty)^{g}$ for which either \eqref{good diophantine prop in thm} or \eqref{ergo} does not hold has Lebesgue measure zero, the asymptotics \eqref{F asymp genus g} are expected to hold for generic $(x_{1},\ldots,x_{2g}) \in \mathcal{A}$. Note that for $g=1$, both \eqref{good diophantine prop in thm} and \eqref{ergo} are satisfied for \textit{all} $(x_{1},x_{2}) \in \mathcal{A}$.
\end{remark}
\begin{remark}
The fact that the oscillations are described in terms of the Riemann $\theta$-function related to a genus $g$ Riemann surface is not surprising; this happens also in the case of the sine process \cite{DIZ, FK20, Widom1995}. 
\end{remark}
\begin{remark}
The above conjecture has been verified numerically for $g=1,2,3$ for several choices of $\vec{x}$ by using the Bornemann Linear Algebra package \cite{Bornemann}.
\end{remark}
\begin{theorem}\label{thm: main result}
Conjecture \ref{conj: genus g conjecture} holds for $g=1$. Furthermore, if $g=1$, then
\begin{align*}
\mathcal{A} = \{(x_{1},x_{2})\in \mathbb{R}^{2}: x_{2}<x_{1}<0 \} \cup \{(x_{1},x_{2})\in \mathbb{R}^{2}: x_{1} \geq 0 \mbox{ and } x_{2} < -2x_{1} \}.
\end{align*}
\end{theorem}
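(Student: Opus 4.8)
The plan is to prove the $g=1$ case of the asymptotics \eqref{F asymp genus g} by a Deift--Zhou steepest descent analysis of a Riemann--Hilbert problem (RHP) encoding $F(r\vec x)$, and to obtain the claimed formula for $\mathcal{A}$ by a direct study of the $g=1$ system \eqref{system for q}. One first records an RHP characterization of $F(r\vec x)$, with $\mathcal{I}=(x_2,x_1)$: using the integrable structure of the Airy kernel (Its--Izergin--Korepin--Slavnov), or the RHP of \cite{ClaeysDoeraene,XuDai} for $F(\mathcal{I}_g^{(b)})$, one expresses $\log F(r\vec x)$ through the solution $Y=Y(z;r)$ of a $2\times2$ RHP whose jump contour is $\mathcal{I}$ together with the rays of the Airy Stokes structure, with piecewise-constant jumps and the large parameter $r^{3/2}$ entering the exponential behaviour at infinity (the rescaling $z\mapsto rz$ turns the Airy phase $\tfrac{2}{3}z^{3/2}$ into $\tfrac{2}{3}r^{3/2}z^{3/2}$). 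One also records a differential identity expressing $\partial_r\log F(r\vec x)$ as a contour integral of entries of $Y$ and $\partial_rY$.

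The heart of the proof is the steepest descent analysis of $Y$ as $r\to+\infty$ for $g=1$. One introduces a $g$-function adapted to the genus-$1$ surface $X$ with branch points $x_0,x_1,x_2,\infty$ and uses it in an $r$-dependent diagonal transformation $Y\mapsto T$ chosen so that the transformed jumps oscillate on the bands and are exponentially close to the identity on the gaps, one of which is $\mathcal{I}=(x_2,x_1)$ itself; the conditions making this possible are the $g=1$ instance of \eqref{system for q}, with $x_0$ the free endpoint they determine (the sign conditions on the effective potential that are also required are dealt with below), and $\Omega_0 r^{3/2}$ is, up to sign and a factor $2\pi$, the constant value of the relevant phase on $[x_2,x_1]$, which is why the shift $\vec\nu=\vec\nu(r,\vec x)$ appears. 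One then opens lenses around the bands ($T\mapsto S$), builds a global parametrix from the genus-$1$ Riemann theta function $\theta$ of \eqref{def of theta genus g} --- this is what produces the factor $\theta(\vec\nu)$ --- and builds local Airy parametrices in fixed discs around $x_0,x_1,x_2$, where the underlying equilibrium measure vanishes like a square root; the local pieces together with the $r$-dependent conformal maps produce the $-\tfrac{3g}{8}\log r=-\tfrac{3}{8}\log r$ term. A small-norm argument gives $R:=(\text{global and local parametrices})^{-1}S=I+\bigO(r^{-3/2})$ uniformly. Inserting the resulting expansion of $Y$ into the differential identity and integrating in $r$ gives \eqref{F asymp genus g}: the $r^3$-term produces the coefficient $c$ of \eqref{def of C1 genus g} once the elementary integrals are rewritten in terms of symmetric functions of $x_0,\dots,x_{2g}$ and the $q_j$, the $\log$-term gives $-\tfrac{3}{8}\log r$, and the remaining $\bigO(1)$ piece is $\log\theta(\vec\nu)+C$ with $C=C(\vec x)$ independent of $r$ (we do not evaluate $C$). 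For $g=1$ the hypotheses \eqref{good diophantine prop in thm} and \eqref{ergo} hold for all $\vec x\in\mathcal{A}$, since \eqref{ergo} reads ``$\Omega_0 n_1=0\Rightarrow n_1=0$'' (true because $\Omega_0>0$) and \eqref{good diophantine prop in thm} involves the single nonzero real number $\Omega_0$.

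For the description of $\mathcal{A}$ when $g=1$ one has $q_2=-1$ and $q_1=\tfrac{1}{2}(x_0+x_1+x_2)$, so \eqref{system for q} reduces to $m_{11}q_0=\tilde m_1$ together with $q_0+q_1x_0-x_0^2=0$ and $x_0>\max\{x_1,0\}$. Eliminating $q_0$ shows that an admissible $x_0$ must satisfy
\begin{align*}
& \int_{x_2}^{x_1}\frac{\sqrt{x_0-s}}{\sqrt{(x_1-s)(s-x_2)}}\Big(s-\frac{x_1+x_2-x_0}{2}\Big)ds=0, \\
& \text{equivalently}\qquad \mathcal{E}(x_0):=\frac{\mathcal{N}(x_0)}{\mathcal{D}(x_0)}+\frac{x_0}{2}=\frac{x_1+x_2}{2},
\end{align*}
where $\mathcal{D}(x_0)>0$ and $\mathcal{N}(x_0)/\mathcal{D}(x_0)$ is the mean of $s$ over $(x_2,x_1)$ against the positive weight $\sqrt{x_0-s}/\sqrt{(x_1-s)(s-x_2)}$. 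One checks that $x_0\mapsto\mathcal{N}/\mathcal{D}$, hence $\mathcal{E}$, is strictly increasing on $(\max\{x_1,0\},+\infty)$ --- its $x_0$-derivative equals the covariance of $s$ and $\tfrac{1}{2(x_0-s)}$ under this weight, both being increasing in $s$ --- and $\mathcal{E}(x_0)\to+\infty$ as $x_0\to+\infty$; hence $\mathcal{E}(x_0)=(x_1+x_2)/2$ has a unique admissible root if and only if $\mathcal{E}_*<(x_1+x_2)/2$, with $\mathcal{E}_*:=\lim_{x_0\downarrow\max\{x_1,0\}}\mathcal{E}(x_0)$. Evaluating the limiting weight gives $\mathcal{E}_*-(x_1+x_2)/2<0$ precisely when $x_2<x_1<0$, or $x_1\ge0$ and $x_2<-2x_1$: for $x_1\ge0$ the limiting weight is $(s-x_2)^{-1/2}$, so $\mathcal{E}_*=\tfrac{x_1+2x_2}{3}+\tfrac{x_1}{2}$ and the threshold is $x_2=-2x_1$; for $x_1<0$ the weight $\sqrt{-s}$ is strictly decreasing, which pulls $\mathcal{N}/\mathcal{D}$ below $(x_1+x_2)/2$. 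Hence $\mathcal{A}$ is the set in the statement, and in particular $\{x_2<x_1<0\}\subset\mathcal{A}$ (this is Proposition \ref{prop: x0}). One finally verifies that on this set the solution of \eqref{system for q} yields a $g$-function whose effective potential has the correct sign on both gaps, so that the steepest descent is valid throughout $\mathcal{A}$.

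The main obstacle is the interplay of these two parts: showing that for $g=1$ the solution of \eqref{system for q} not only exists exactly on the stated region but also produces a $g$-function with the correct sign of the effective potential on the two gaps --- especially in the regime $x_1\ge0$, where the interval is not literally in the bulk --- and then propagating all constants through the parametrix construction so that the $r^3$-coefficient matches \eqref{def of C1 genus g} exactly and the $\bigO(1)$-term is genuinely $\log\theta(\vec\nu)+C$. Verifying these sign (``$\lambda$-'') inequalities on the gaps and the global-parametrix bookkeeping that yields the theta factor is where most of the technical effort lies.
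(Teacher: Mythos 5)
Your overall plan coincides with the paper's: differential identity via IIKS/Claeys--Doeraene, Deift--Zhou analysis with a $\mathfrak{g}$-function built from the genus-$1$ surface with branch points $x_0,x_1,x_2,\infty$, global parametrix via theta functions, local parametrices, small-norm estimate, and integration of the differential identity. Your characterization of $\mathcal{A}$ via the covariance argument for the monotonicity of $\mathcal{E}(x_0)$ is a genuinely different (and arguably cleaner) route than the paper's Proposition~\ref{prop: x0}, which instead computes $\mathcal{F}'(x)$ explicitly and inspects the sign of the numerator $3x-x_1-x_2$; both establish the same dichotomy and the same threshold $x_2=-2x_1$ when $x_1\geq0$.

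There is, however, one concrete error in the parametrix choice, and one substantive gap in the hard part of the argument. First, the local parametrices at $x_1$ and $x_2$ cannot be built from the Airy model: after factoring out the piecewise-constant jump $e^{-i\Omega r^{3/2}\sigma_3}$ on $(x_2,x_1)$, the remaining local jump structure at $x_1$ and at $x_2$ consists of a quarter-rotation on one side, the lens jumps, and \emph{no} jump on the other side of the real line --- this is the three-ray Bessel structure of Appendix~\ref{subsec:Besselparametrix}, not the four-ray Airy structure of Appendix~\ref{subsec:Airyparametrix}; only $x_0$ has the Airy structure (quarter-rotation on one side of the real line, upper-triangular jump on the other, plus lenses), consistent with the local behavior $S(z)=\bigO(\log(z-x_j))$. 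Using Airy parametrices there would mismatch the jump at order one and the analysis would not close. Second, the claim that the local parametrices ``produce the $-\tfrac{3}{8}\log r$ term'' and that the $\bigO(1)$ piece is ``genuinely $\log\theta(\vec\nu)+C$'' is not self-proving: in the paper this is precisely the most delicate step (Section~\ref{section:integration1}). One must compute the contribution $I_2(r)$ from the subleading term $R^{(1)}$, reduce a web of theta values at the eight points $0,\tfrac12,\tfrac{\tau}{2},\tfrac{1+\tau}{2},\nu,\nu+\tfrac12,\nu+\tfrac{\tau}{2},\nu+\tfrac{1+\tau}{2}$ to theta data at $0$ and $\nu$ alone via Proposition~\ref{prop: Abel map and theta function}, use the identity \eqref{rationalFunc} to recognize a total $r$-derivative, and then invoke the averaging lemma (Proposition~\ref{prop: 1periodic log int}) to show the residual oscillations of order one vanish while an elliptic-integral evaluation (Proposition~\ref{prop: c2 is -3/8}) pins the log-coefficient to $-\tfrac{3}{8}$; separately, an identity between $\Omega/c_0$ and complete elliptic integrals is needed to get the exact constant $c_3=1$ so that $I_3(r)=\partial_r\log\theta(\nu)$. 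None of this is automatic from the parametrix construction, and your sketch leaves it as an acknowledged ``obstacle'' rather than supplying the mechanism.
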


%In particular, $q$ has exactly one simple zero on each of the intervals $(x_{2j},x_{2j-1})$, $j=1,...,g$, and $q'(x_{0})<0$.\JL{Is this phrase needed for the statement of the conjecture?} 

\paragraph{Outline of the paper.} In Section \ref{section:diffid}, we use the method developed by Its, Izergin, Korepin and Slavnov \cite{IIKS}, combined with a result of Claeys and Doeraene \cite{ClaeysDoeraene}, to express $\partial_{r} \log F(r \mathcal{I}_{1}^{\smash {(b)}}) = \partial_{r} \log F(r (x_{2},x_{1}))$ in terms of the solution, denoted $\Psi$, of a $2\times2$ matrix Riemann-Hilbert (RH) problem. In Sections \ref{section:RH1}-\ref{section:smallnorm}, we obtain the asymptotics of $\Psi$ as $r\to+\infty$ via the Deift-Zhou steepest descent method \cite{DKMVZ1,DeiftZhou}. As a consequence of our RH analysis, we obtain large $r$ asymptotics for $\partial_{r} \log F(r (x_{2},x_{1}))$. The simplification of these asymptotics is the most challenging part of this work; this is done in Section \ref{section:integration1}. Finally, we prove Theorem \ref{thm: main result} by integrating the asymptotics of $\partial_{r} \log F(r (x_{2},x_{1}))$ with respect to $r$. %The content of Section \ref{Section: theta identities} is a summary of some $\theta$-identities that will be needed in Section \ref{section:integration1}.

\section{Differential identity for $F$}\label{section:diffid}
The method of Its, Izergin, Korepin and Slavnov \cite{IIKS} applies to kernels of the \textit{integrable form} $K(x,y)=\frac{\vec{f}^{\hspace{0.6mm}t}(x)\vec{h}(y)}{x-y}$, where $\vec{f}(x)$ and $\vec{h}(y)$ are column vectors satisfying $\vec{f}^{\hspace{0.6mm}t}(x)\vec{h}(x)=0$. Given a subset $A \subset \mathbb{R}$, we write $\chi_{A}$ for the characteristic function of $A$. It is easy to see that
\begin{align*}
K^{\Ai}\big|_{r(x_{2},x_{1})}(x,y) = K^{\Ai}(x,y)\chi_{r(x_{2},x_{1})}(y), \qquad x,y \in \mathbb{R},
\end{align*}
is integrable with $\vec f$ and $\vec h$ given by
\begin{align*}
\vec{f}(x)=\begin{pmatrix}\Ai(x)\\
\Ai'(x)\end{pmatrix}, \qquad \vec{h}(y)=\begin{pmatrix}
\Ai'(y)\chi_{r(x_2, x_1)}(y) \\
-\Ai(y)\chi_{r(x_2, x_1)}(y)
\end{pmatrix}.
\end{align*}
The associated integral operator $\mathcal K_{r}$, acting on $L^{2}(rx_{2},+\infty)$, is given by
\begin{equation}\label{eq:defoperator}
\mathcal{K}_{r}\phi(x)=\int_{rx_{2}}^{+\infty}K^\Ai(x,y)\chi_{r(x_2, x_1)}(y)\phi(y)dy, \qquad \phi \in L^{2}(rx_{2},+\infty).
\end{equation}
By definition, $F(r(x_{2},x_{1})) =\det\left(I-\mathcal{K}_{r}\right) = \mathbb{P}(\mbox{no points lie in }(rx_{2},rx_{1}))>0 $. Hence, using standard properties of trace class operators, we obtain
\begin{align}
\partial_r\log\det\left(I-\mathcal{K}_{r}\right)&=-\text{Tr}\left[\left(I-\mathcal{K}_{r}\right)^{-1}\partial_r\mathcal{K}_{r}\right]=\sum_{j=1}^{2} (-1)^jx_j\text{Tr}\left[\left(I-\mathcal{K}_{r}\right)^{-1}\mathcal{K}_{r} \delta_{rx_j}\right] \nonumber \\
    &=\sum_{j=1}^2(-1)^{j}x_j\text{Tr}\left[\mathcal{R}_{r} \delta_{rx_j}\right] =\sum_{j=1}^2(-1)^{j}x_j\lim_{u\to rx_j}R_{r}(u,u), \label{F_r identity}
\end{align}
where the limits $u\to rx_j$, $j = 1,2$, are taken from the interior of $(rx_2,rx_1)$, $\delta_{rx_j}$ is the Dirac delta operator, the integral operator $\mathcal{R}_{r}$ is given by
\begin{align*}
\mathcal{R}_{r} := \left(I-\mathcal{K}_{r}\right)^{-1}\mathcal{K}_{r} = \left(I-\mathcal{K}_{r}\right)^{-1} - I,
\end{align*}
and $R_{r}$ is the kernel of $\mathcal{R}_{r}$. 
\begin{figure}
\centering
\begin{tikzpicture}
\draw[fill] (0,0) circle (0.05);
\draw (3,0) -- (8,0);
\draw (0,0) -- (120:3);
\draw (0,0) -- (-120:3);
\draw (0,0) -- (-3,0);
\draw[fill] (3,0) circle (0.05);

\node at (0.2,-0.3) {$x_2$};
\node at (3,-0.3) {$x_1$};
\node at (8,-0.3) {$+\infty$};

\node at (98:2) {$\begin{pmatrix} 1 & 0 \\ 1 & 1 \end{pmatrix}$};
\node at (160:2) {$\begin{pmatrix} 0 & 1 \\ -1 & 0 \end{pmatrix}$};
\node at (-98:2) {$\begin{pmatrix} 1 & 0 \\ 1 & 1 \end{pmatrix}$};

\node at (5.5,0.6) {$\begin{pmatrix} 1 & 1 \\ 0 & 1 \end{pmatrix}$};

\draw[black,arrows={-Triangle[length=0.18cm,width=0.12cm]}]
(-120:1.5) --  ++(60:0.001);
\draw[black,arrows={-Triangle[length=0.18cm,width=0.12cm]}]
(120:1.3) --  ++(-60:0.001);
\draw[black,arrows={-Triangle[length=0.18cm,width=0.12cm]}]
(180:1.5) --  ++(0:0.001);

\draw[black,arrows={-Triangle[length=0.18cm,width=0.12cm]}]
(0:5.5) --  ++(0:0.001);

\end{tikzpicture}
\caption{The jump contour $\Gamma$ for the RH problem for $\Psi$.}
\label{fig:modelRHcontours}
\end{figure}
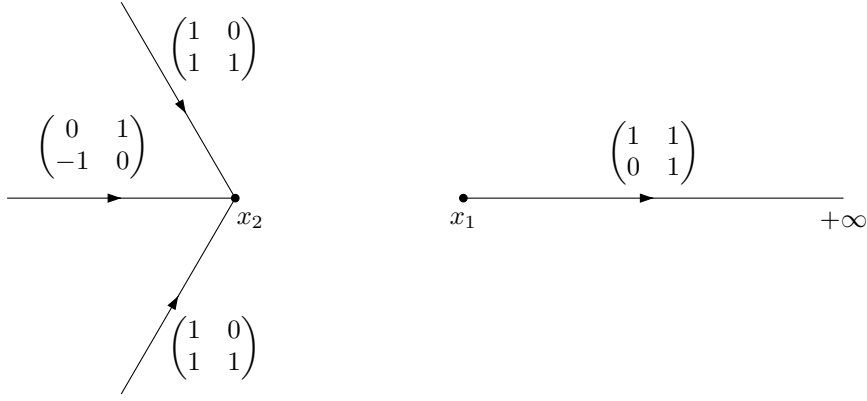
Now, we invoke a result of Claeys and Doeraene \cite{ClaeysDoeraene}: for $u \in (rx_2,rx_1)$, we have
\begin{equation}\label{link between resolvant and RHP}
R_{r}(u,u)=\frac{1}{2\pi i}\left(\Psi_+^{-1}\Psi_+'\right)_{21}(u; rx_{1},rx_{2}),
\end{equation}
where $\Psi$ is the solution to the following RH problem. 
\subsubsection*{RH problem for $\Psi=\Psi(\cdot;x_{1},x_{2})$}
\begin{enumerate}[label={(\alph*)}]
\item[(a)] $\Psi : \C \setminus \Gamma \rightarrow \C^{2\times 2}$ is analytic, where
\begin{equation}\label{eq:defGamma}
\Gamma=e^{\pm\frac{2\pi i}{3}} (x_2,+\infty)\cup (-\infty,x_2] \cup [x_1,+\infty)
\end{equation}
is oriented as in Figure \ref{fig:modelRHcontours}.
\item[(b)] $\Psi(z)$ has continuous boundary values as $\Gamma\backslash \{x_{1},x_{2}\}$ is approached from the left ($+$ side) and from the right ($-$ side) and they are related by
\begin{equation*}
\begin{array}{ll}
\Psi_+(z) = \Psi_-(z) \begin{pmatrix} 1 & 0 \\ 1 & 1 \end{pmatrix} & \textrm{for } z \in e^{\pm\frac{2\pi i}{3}} (x_2,+\infty), \\
\Psi_+(z) = \Psi_-(z) \begin{pmatrix} 0 & 1 \\ -1 & 0 \end{pmatrix} & \textrm{for } z \in (-\infty,x_2), \\
\Psi_+(z) = \Psi_-(z) \begin{pmatrix} 1 & 1 \\ 0 & 1 \end{pmatrix} & \textrm{for } z \in (x_1,+\infty).
\end{array}
\end{equation*}
\item[(c)] As $z \rightarrow \infty$, we have
\begin{equation}
\label{eq:psiasympinf}
\Psi(z) = \left( I + \bigO( z^{-1})\right) z^{\frac{1}{4} \sigma_3} M^{-1} e^{-\frac{2}{3}z^\frac{3}{2}\sigma_3},
\end{equation}
where principal branches of $z^\frac{3}{2}$ and $z^{\frac{1}{4}}$ are taken, and
\begin{align*}
& M = \frac{1}{\sqrt{2}}\begin{pmatrix}
1 & i \\
i & 1
\end{pmatrix}, \qquad \sigma_3 = \begin{pmatrix} 1 & 0 \\ 0 & -1 \end{pmatrix}.
\end{align*}
\item[(d)] $\Psi(z) = \Or( \log(z-x_j) )$ as $z \rightarrow x_j$, $j = 1,2$.
\end{enumerate}
By combining \eqref{F_r identity} with \eqref{link between resolvant and RHP}, we arrive at
\begin{align}\label{final formula diff identity}
\partial_{r} \log F(r(x_{2},x_{1}))&=\sum_{j=1}^2\frac{(-1)^{j}x_j}{2\pi i}\lim_{z \to x_j}\left(\Psi_+^{-1}\Psi_+'\right)_{21}(rz;rx_{1},rx_{2}),
\end{align}
where the limits as $z\to x_j$, $j=1,2$ are taken with $z \in (x_2,x_1)$. %The differential identity \eqref{final formula diff identity} expresses $\partial_{r} \log F(r\vec{x})$ in terms of $\Psi$, which is the solution to a RH problem. In Sections \ref{section:RH1}-\ref{section:smallnorm}, we employ the Deift/Zhou steepest descent method \cite{DeiftZhou} to obtain the asymptotics of $\Psi(z), \Psi'(z)$ as $r\to+\infty$ for $z$ in small neighborhoods of $x_1,x_2$.

\section{Asymptotic analysis of $\Psi$: first steps}\label{section:RH1}
The goal of this section is to implement the first steps of the steepest descent method for the RH problem for $\Psi$. In Subsection \ref{subsection: g-function}, we construct the $\mathfrak{g}$-function and list its important properties.  In Subsection \ref{Subsection: T to S}, we use $\mathfrak{g}$ to normalize the RH problem at $\infty$, and then proceed with the so-called opening of the lenses. We henceforth assume that $g=1$.

\subsection{$\mathfrak{g}$-function}\label{subsection: g-function}
Our first goal is to prove that the system of equations (\ref{system for q}) has a unique solution $(x_{0},q_{0}) \in \R^+ \times \mathbb{R}$, where $\R^+ = (0,+\infty)$. According to the last equation in (\ref{system for q}), the degree two polynomial $q(z) := \sum_{j=0}^{g+1}q_j z^j = q_2 z^2 + q_1 z + q_0$ satisfies $q(x_{0})=0$, and is therefore of the form
\begin{align}\label{qPolynomial}
q(z)=-z^{2}+\frac{z}{2}(x_0+x_1+x_2)+q_0, \qquad q_0=\frac{x_0}{2}(x_0-x_1-x_2),
\end{align}
for a certain $x_{0}$ that will be determined so that the first equation in \eqref{system for q} holds, i.e., so that $m_{11}q_0 = \tilde{m}_1$. Defining the function $\mathcal{F}:[x_{1},\infty)\to\mathbb{R}$ by
\begin{equation}\label{Fdef}
\mathcal{F}(x):=\int_{x_2}^{x_1}\frac{\sqrt{x-s}\left(s+\frac{1}{2}(x-x_1-x_2)\right)}{\sqrt{(x_1-s)(s-x_2)}}ds,
\end{equation}
the equation $m_{11}q_0 = \tilde{m}_1$ can be rewritten as $\mathcal{F}(x_0) = 0$. Hence the following proposition implies that (\ref{system for q}) has a unique solution $(x_{0},q_{0}) \in \R^+ \times \mathbb{R}$.

\begin{proposition}\label{prop: x0}
\, 
\begin{itemize}
\item[(a)] Given $x_2<x_1<0$, there is a unique $x_0 > 0$ such that $\mathcal{F}(x_0)=0$. Moreover, $x_0\in(0,x_1-x_2)$.
\item[(b)] Given $x_{1}\geq 0$, the equation $\mathcal{F}(x_0)=0$ admits a solution $x_0\in (x_{1},+\infty)$ if and only if $x_{2}<-2x_{1}$. Moreover, the solution $x_{0}$ is unique and satisfies $x_{0} \in (x_{1},x_{1}-x_{2})$.
\end{itemize}
\end{proposition}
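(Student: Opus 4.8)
The plan is to analyze the function $\mathcal{F}(x)$ defined in \eqref{Fdef} directly, treating $x_0$ as a root of $\mathcal{F}$ on $[x_1,\infty)$, and to establish existence, uniqueness, and the claimed localization of the root in each of the two parameter regimes by studying the sign and monotonicity of $\mathcal{F}$.

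First I would examine the endpoint behavior. At $x = x_1$, the integrand becomes $\sqrt{x_1 - s}\,(s + \tfrac12(x_1 - x_1 - x_2))/\sqrt{(x_1-s)(s-x_2)} = (s - \tfrac{x_2}{2})/\sqrt{s - x_2}$, which integrates over $(x_2, x_1)$ to an explicit, manifestly positive quantity (the integrand is positive on $(x_2,x_1)$ since $s > x_2 > 2 x_2 /2$ wait — actually $s - x_2/2 > x_2 - x_2/2 = x_2/2$; when $x_2 < 0$ this could be negative near $s = x_2$, so one must compute the integral). I would compute $\mathcal{F}(x_1) = \int_{x_2}^{x_1} \frac{s - x_2/2}{\sqrt{s-x_2}}\,ds$ in closed form via the substitution $u = s - x_2$, getting something like $\tfrac23(x_1 - x_2)^{3/2} + \tfrac{x_2}{2}(x_1-x_2)^{1/2} = (x_1-x_2)^{1/2}\big(\tfrac23(x_1-x_2) + \tfrac{x_2}{2}\big) = (x_1-x_2)^{1/2}\cdot \tfrac16(4x_1 - x_2)$. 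Its sign is that of $4 x_1 - x_2$: negative when $x_1 < 0$ and $x_2 > 4 x_1$... hmm, this needs care. Rather than guess, the point is that $\mathcal{F}(x_1)$ is explicitly computable and its sign is a simple linear condition in $x_1, x_2$. Similarly I would find the leading asymptotics of $\mathcal{F}(x)$ as $x \to +\infty$: since $\sqrt{x - s}\big(s + \tfrac12(x - x_1 - x_2)\big) \sim \tfrac12 x^{3/2}$ for fixed $s$, one gets $\mathcal{F}(x) \sim \tfrac12 x^{3/2}\int_{x_2}^{x_1} \frac{ds}{\sqrt{(x_1-s)(s-x_2)}} = \tfrac{\pi}{2} x^{3/2} \to +\infty$. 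Hence $\mathcal{F}$ is eventually positive and large.

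Next, the key structural claim is that $\mathcal{F}$ is \emph{strictly increasing} on $(x_1, \infty)$ — or at least that $\mathcal{F}'$ has a controlled sign — so that a root, when it exists, is unique. I would differentiate under the integral sign: $\mathcal{F}'(x) = \int_{x_2}^{x_1} \partial_x\Big[\frac{\sqrt{x-s}\,(s + \tfrac12(x - x_1 - x_2))}{\sqrt{(x_1-s)(s-x_2)}}\Big]ds$, where $\partial_x\big[\sqrt{x-s}(s + \tfrac12(x-x_1-x_2))\big] = \frac{s + \tfrac12(x - x_1 - x_2)}{2\sqrt{x-s}} + \tfrac12\sqrt{x-s} = \frac{2(x - s) + s + \tfrac12(x-x_1-x_2)}{2\sqrt{x-s}}$, and I would check that the numerator $\tfrac52 x - s - \tfrac12(x_1 + x_2)$ ... is positive for $s \in (x_2, x_1)$ and $x \ge x_1$ — indeed $\tfrac52 x - s > \tfrac52 x_1 - x_1 = \tfrac32 x_1$ and one subtracts $\tfrac12(x_1+x_2)$, giving $\ge \tfrac32 x_1 - \tfrac12 x_1 - \tfrac12 x_2 = x_1 - \tfrac12 x_2 > 0$ when $x_1 > x_2$ (e.g. if $x_2<0$; if $x_1 \ge 0$ then $x_1 - x_2/2 > 0$ as well since $x_2 < x_1$). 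So $\mathcal{F}' > 0$ throughout, giving strict monotonicity and hence uniqueness in all cases.

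With monotonicity in hand, the rest is bookkeeping via the intermediate value theorem. For part (a), $x_2 < x_1 < 0$: here $4x_1 - x_2$ could be either sign, so I cannot conclude $\mathcal{F}(x_1) < 0$ from the crude computation above — I should instead evaluate $\mathcal{F}$ at $x = x_1 - x_2 > 0$ and show $\mathcal{F}(x_1 - x_2) > 0$ while $\mathcal{F}(x_1) < 0$, which together with monotonicity pins the root in $(0, x_1 - x_2)$ — but I also need $\mathcal{F}(x_1) < 0$, so more likely the correct evaluation points are $x = 0$ (showing $\mathcal{F}(0) < 0$, using $x_1, x_2 < 0$, perhaps again by an explicit integral or a sign argument on the integrand since then $s + \tfrac12(0 - x_1 - x_2)$ with $s \in (x_2, x_1)$ negative) and $x = x_1 - x_2$ (showing $\mathcal{F}(x_1 - x_2) > 0$). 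For part (b), $x_1 \ge 0$: I would show $\mathcal{F}(x_1) = \tfrac16(x_1 - x_2)^{1/2}(4 x_1 - x_2)$ has sign opposite to... wait, we want a root in $(x_1, \infty)$, and $\mathcal{F}(+\infty) = +\infty$, so a root exists iff $\mathcal{F}(x_1) < 0$, i.e. iff $4x_1 - x_2 < 0$, i.e. $x_2 < 4 x_1$ — but the claimed condition is $x_2 < -2x_1$. This discrepancy tells me my closed-form evaluation of $\mathcal{F}(x_1)$ is wrong and I must redo it carefully; the honest approach is to recompute $\mathcal{F}(x_1) = \int_{x_2}^{x_1}(s - \tfrac{x_2}{2})(s - x_2)^{-1/2}\,ds$ correctly (with $u = s - x_2$, $s - x_2/2 = u + x_2/2$, integrand $= u^{1/2} + \tfrac{x_2}{2}u^{-1/2}$, integral over $u \in (0, x_1 - x_2)$ equal to $\tfrac23(x_1-x_2)^{3/2} + x_2(x_1-x_2)^{1/2} = (x_1-x_2)^{1/2}\big(\tfrac23(x_1-x_2) + x_2\big) = (x_1-x_2)^{1/2}\cdot\tfrac13(2x_1 + x_2)$), whose sign is that of $2x_1 + x_2$ — and $2x_1 + x_2 < 0 \iff x_2 < -2x_1$, matching the statement exactly. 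Then $\mathcal{F}(x_1) < 0 \iff x_2 < -2x_1$, and combined with $\mathcal{F}(+\infty)=+\infty$ and strict monotonicity this gives existence and uniqueness of $x_0 \in (x_1, \infty)$ precisely when $x_2 < -2x_1$; for the upper bound $x_0 < x_1 - x_2$ I would evaluate $\mathcal{F}(x_1 - x_2)$ and show it is positive (plausibly again a clean closed form or a positivity-of-integrand argument since at $x = x_1 - x_2$ one has $x - s > x - x_1 = -x_2 \ge 0$ for... no, need $x_2 < 0$ which holds since $x_2 < -2x_1 \le 0$).

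The main obstacle I anticipate is exactly this: getting the explicit evaluations of $\mathcal{F}$ at the probe points $x = x_1$, $x = x_1 - x_2$, and (in case (a)) $x=0$ correct — these involve elliptic-type integrals in general but collapse to elementary expressions at these special values, and the precise constants are what produce the sharp threshold $x_2 < -2x_1$ and the sharp localization intervals. The monotonicity step, while also requiring a careful sign check of $\mathcal{F}'$, is more robust. Once both are verified, parts (a) and (b) follow by the intermediate value theorem together with strict monotonicity, and this in turn (as noted in the text preceding the proposition) shows that the system \eqref{system for q} has a unique solution $(x_0, q_0) \in \mathbb{R}^+ \times \mathbb{R}$ in the regime $x_2 < x_1 < 0$, and characterizes exactly when such a solution exists for $x_1 \ge 0$.
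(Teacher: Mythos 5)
Your overall strategy matches the paper's: compute $\mathcal{F}'$, show it is positive on the relevant ray, evaluate $\mathcal{F}$ at the probe points $0$, $x_1$, and $x_1 - x_2$, and invoke the intermediate value theorem. Your final closed form $\mathcal{F}(x_1) = \tfrac13\sqrt{x_1-x_2}\,(2x_1+x_2)$ and the threshold $x_2 < -2x_1$ are correct and agree with the paper. However, there are two concrete gaps.

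First, your differentiation of $\mathcal{F}$ contains a slip. You wrote
\[
\frac{s + \tfrac12(x - x_1 - x_2)}{2\sqrt{x-s}} + \tfrac12\sqrt{x-s} = \frac{2(x - s) + s + \tfrac12(x-x_1-x_2)}{2\sqrt{x-s}},
\]
but $\tfrac12\sqrt{x-s} = \frac{x-s}{2\sqrt{x-s}}$, not $\frac{2(x-s)}{2\sqrt{x-s}}$. The correct numerator is therefore $(x-s) + s + \tfrac12(x-x_1-x_2) = \tfrac32 x - \tfrac12(x_1+x_2)$, i.e.\
\[
\mathcal{F}'(x) = \int_{x_2}^{x_1} \frac{3x - x_1 - x_2}{4\sqrt{(x-s)(x_1-s)(s-x_2)}}\,ds.
\]
The crucial point you lose by the slip is that the correct numerator is \emph{independent of $s$}, so the sign of $\mathcal{F}'(x)$ is determined by the single linear factor $3x - x_1 - x_2$: positive for $x > 0$ in case (a) (since $x_1+x_2<0$) and positive for $x \geq x_1$ in case (b) (since $\tfrac{x_1+x_2}{3} < x_1$). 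Your $s$-dependent (wrong) numerator forces you into a pointwise estimate $\tfrac52 x - s - \tfrac12(x_1+x_2) > x_1 - \tfrac12 x_2 > 0$, and this lower bound is simply false in part of the regime of case (a): take $x_1 = -10$, $x_2 = -11$, then $x_1 - \tfrac12 x_2 = -4.5 < 0$. So even as a heuristic the estimate does not close.

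Second, your sketch for $\mathcal{F}(0) < 0$ in case (a) — "a sign argument on the integrand since then $s + \tfrac12(-x_1-x_2)$ with $s\in(x_2,x_1)$ negative" — does not work as stated. Writing $x_* = \tfrac12(x_1+x_2)$, the factor $s - x_*$ vanishes at the midpoint and changes sign on $(x_2,x_1)$, so the integrand is not one-signed. The paper instead uses the antisymmetry of $f(s) = \frac{s - x_*}{\sqrt{(x_1-s)(s-x_2)}}$ about $x_*$ together with the fact that $\sqrt{-s}$ is positive and strictly decreasing on $(x_2,x_1)$: splitting the integral at $x_*$ and pairing $x_*\pm s$, the positive contribution (on $(x_*,x_1)$) is weighted by a smaller $\sqrt{-s}$ than the negative contribution (on $(x_2,x_*)$), so the sum is strictly negative. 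You would need to supply such a symmetry argument to complete part (a).

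With those two repairs (the corrected $\mathcal{F}'$ making monotonicity immediate, and the symmetry argument for $\mathcal{F}(0)<0$), your proof becomes essentially identical to the paper's.
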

\begin{proof}
(a) It is clear that $\mathcal{F}(x)$ is continuous and differentiable on its domain. We will show that $\mathcal{F}(0)<0$, $\mathcal{F}(x_1-x_2)>0$, and that $\mathcal{F}'(x)>0$ for all $x\in(0,\infty)$ and $\mathcal{F}'(x) \to + \infty$ as $x \to + \infty$, which implies the stated result. The inequality $\mathcal{F}(x_1-x_2)>0$ is easy to establish from a direct inspection of \eqref{Fdef}. Also, we note that
\begin{align}\label{F' lol}
\mathcal{F}'(x)=\int_{x_2}^{x_1}\frac{3x-x_1-x_2}{4\sqrt{(x-s)(x_1-s)(s-x_2)}}~ds,
\end{align}
which is clearly positive for $x>0$ because $x_2<x_1<0$, and $\mathcal{F}'(x) \sim \frac{3\pi}{4}\sqrt{x} \to + \infty$ as $x \to + \infty$. To show that $\mathcal{F}(0)<0$, let $x_*=\frac{1}{2}(x_1+x_2)$, and note that $\mathcal{F}(0)$ can be written as
\begin{align*}
\mathcal{F}(0)=\left(\int_{x_2}^{x_*}+\int_{x_*}^{x_1}\right)\frac{\sqrt{-s}\left(s-x_*\right)}{\sqrt{(x_1-s)(s-x_2)}}ds<0.
\end{align*}
Since $f(s)=\frac{s-x_*}{\sqrt{(x_1-s)(s-x_2)}}$ satisfies $f(x_{*}+s)=-f(x_{*}-s)$ for all $s \in (-\frac{x_{1}-x_{2}}{2},\frac{x_{1}-x_{2}}{2})$ and since $\sqrt{-s}$ is positive and decreases as $s\in(x_2,x_{1})$ increases, this implies that $\mathcal{F}(0)<0$. \\
(b) The numerator in \eqref{F' lol} is positive if and only if $x>\frac{x_{1}+x_{2}}{3}$. Since $x_{1} \geq 0$, we have $\frac{x_{1}+x_{2}}{3} < x_{1}$ and therefore $\mathcal{F}'(x) > 0$ for all $x \in (x_{1},+\infty)$. By a direct computation we get
\begin{align*}
\mathcal{F}(x_{1}) = \int_{x_{2}}^{x_{1}} \frac{s-\frac{x_{2}}{2}}{\sqrt{s-x_{2}}}ds = \frac{1}{3}\sqrt{x_{1}-x_{2}}(2x_{1}+x_{2}).
\end{align*}
This shows that $\mathcal{F}(x_{1})<0$, and therefore that there exists $x_{0} \in (x_{1},+\infty)$ such that $\mathcal{F}(x_{0})=0$, if and only if $x_{2}<-2x_{1}$. Since $x_{2}<0$, the inequality $\mathcal{F}(x_1-x_2)>0$ still holds, which implies that $x_{0} \in (x_{1},x_{1}-x_{2})$.
\end{proof}
Throughout the remainder of this paper, $x_0$ is defined as in Proposition \ref{prop: x0}. Since
\begin{equation*}
q(z)=\tilde{q}(z)(z-x_0), \qquad \tilde{q}(z):=-z-\frac{1}{2}(x_0-x_1-x_2),
\end{equation*}
and $x_2<-\frac{1}{2}(x_0-x_1-x_2)<x_1$, we have 
\begin{align}\label{qsigns}
q(x_{2})<0,  \qquad q(x_{1})>0, \qquad q(x_{0})=0, \qquad q'(x_{0})=\tilde{q}(x_{0})<0.
\end{align}

The square root $\sqrt{\mathcal{R}(z)} = \sqrt{z-x_0}\sqrt{z-x_1}\sqrt{z-x_2}$ is analytic on $\mathbb{C}\setminus \big( (-\infty,x_2] \cup [x_1,x_0] \big)$, behaves as $\sqrt{\mathcal{R}(z)} \sim z^\frac{3}{2}$ as $z \to \infty$, and satisfies the jump relation
\begin{align}\label{jump for R 2cuts}
& \sqrt{\mathcal{R}(z)}_{+} + \sqrt{\mathcal{R}(z)}_{-} = 0, & & z \in (-\infty,x_2)\cup(x_1,x_0).
\end{align}
We define the $\mathfrak{g}$-function by
\begin{align}\label{g function def 2 cut}
\mathfrak{g}(z) = \int_{x_0}^{z} \frac{q(s)}{\sqrt{\mathcal{R}(s)}}ds,
\end{align}
where $q$ is given by \eqref{qPolynomial}, and the path of integration does not cross $(-\infty,x_0]$. 
\begin{lemma}\label{g_lemma}
\begin{enumerate}
The $\mathfrak{g}$-function satisfies the following properties:
\item $\mathfrak{g}$ is analytic in $\mathbb{C}\setminus(-\infty,x_0]$ and satisfies $\mathfrak{g}(z) = \overline{\mathfrak{g}(\overline{z})}$ for $z\in\mathbb{C}\setminus(-\infty,x_0)$.
\item $\mathfrak{g}$ satisfies the jump conditions 
\begin{align}
& \mathfrak{g}_{+}(z) + \mathfrak{g}_{-}(z) = 0, & & z \in (-\infty,x_2)\cup(x_1,x_0), \label{jump1 g 2cuts} \\
& \mathfrak{g}_{+}(z) - \mathfrak{g}_{-}(z) = i \Omega, & & z \in (x_2,x_1), \label{jump3 g 2cuts}
\end{align}
where $\Omega = 2i\int_{x_1}^{x_0} \mathfrak{g}_{+}'(s)ds = -2 i \mathfrak{g}_{+}(x_1) > 0$.
\item As $z\to\infty$, we have
\begin{equation}\label{asymp g 2cuts}
\mathfrak{g}(z) = -\frac{2}{3}z^\frac{3}{2}+\frac{1}{4}\left((x_1-x_2)^2-x_0(3x_0-2x_1-2x_2)\right)z^{-\frac{1}{2}} + \bigO(z^{-\frac{3}{2}}).
\end{equation}
In particular, $\re \mathfrak{g}(z) \to +\infty$ as $z \to \infty$ along either of the two rays $\arg(z-x_2) = \pm 2\pi/3$.

\item There exists an open neighborhood $\mathcal{V} \subset \C$ of $(-\infty,x_0)$ and an $M>0$ such that 
\begin{align}\label{full sector at -inf}
\Big\{z \in \mathbb{C} : \arg(z-x_2) \in \Big(-\pi,-\frac{2\pi}{3}\Big]\cup \Big[\frac{2\pi}{3},\pi\Big) \mbox{ and } |z| \geq M \Big\} \subset \mathcal{V}
\end{align}
and such that
\begin{equation}\label{real g}
\re \mathfrak{g}(z) \geq0 \qquad \mbox{for all }z\in\mathcal{V},
\end{equation}
where equality holds in (\ref{real g}) if and only if $z\in(-\infty,x_2]\cup[x_1,x_0]$.
\end{enumerate}
\end{lemma}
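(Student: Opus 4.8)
The plan is to extract every property directly from the defining integral formula \eqref{g function def 2 cut}, $\mathfrak{g}(z) = \int_{x_0}^{z} q(s)/\sqrt{\mathcal{R}(s)}\,ds$, using the known analyticity and jump structure of $\sqrt{\mathcal{R}(z)}$ recorded in \eqref{jump for R 2cuts} together with the sign information \eqref{qsigns} on $q$. For property~1: since $q$ is a polynomial and $\sqrt{\mathcal{R}(s)}$ is analytic and nonvanishing on $\mathbb{C}\setminus((-\infty,x_2]\cup[x_1,x_0])$, the integrand is analytic there; one checks that $\mathfrak{g}$ extends analytically across $(x_2,x_1)$ because, although $\sqrt{\mathcal{R}}$ is genuinely discontinuous across that segment, $q$ does not vanish on it and the monodromy around the branch points $x_1,x_2$ is what produces the additive constant $i\Omega$ — so in fact $\mathfrak{g}$ is analytic on all of $\mathbb{C}\setminus(-\infty,x_0]$ only after one verifies the jump across $(x_2,x_1)$ is a \emph{constant}, which I do in property~2; strictly, $\mathfrak{g}$ is analytic on $\mathbb{C}\setminus(-\infty,x_0]$ in the sense that it is multivalued with constant periods, and single-valued off $(-\infty,x_0]$. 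The symmetry $\mathfrak{g}(z)=\overline{\mathfrak{g}(\bar z)}$ follows because $q$ has real coefficients, $x_0,x_1,x_2\in\mathbb{R}$, and the square root is chosen positive on $(x_0,+\infty)$, so the Schwarz reflection principle applies to the integral.

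For property~2: across $(-\infty,x_2)\cup(x_1,x_0)$ we have $\sqrt{\mathcal{R}}_+ = -\sqrt{\mathcal{R}}_-$ by \eqref{jump for R 2cuts}, hence the integrand changes sign, giving $\mathfrak{g}_+ + \mathfrak{g}_- = 2\mathfrak{g}(x_0)$ along these arcs; since the path from $x_0$ can be taken with $\mathfrak{g}(x_0)=0$ this yields \eqref{jump1 g 2cuts} — with care about which component of $(-\infty,x_0]$ one is on and a deformation argument showing the ``constant of integration'' vanishes on each. For \eqref{jump3 g 2cuts}, the difference $\mathfrak{g}_+ - \mathfrak{g}_-$ on $(x_2,x_1)$ equals $\oint q/\sqrt{\mathcal{R}}$ around a loop encircling $[x_1,x_0]$ (a closed cycle, hence a constant independent of the point in $(x_2,x_1)$), and deforming this loop to a contour hugging $[x_1,x_0]$ and using $\sqrt{\mathcal{R}}_+ = -\sqrt{\mathcal{R}}_-$ there gives $i\Omega$ with $\Omega = 2i\int_{x_1}^{x_0}\mathfrak{g}_+'(s)\,ds$. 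Positivity $\Omega>0$ is the arithmetic claim: $\mathfrak{g}_+'(s) = q(s)/\sqrt{\mathcal{R}(s)}_+$ on $(x_1,x_0)$, and one checks the sign of $q(s)/\sqrt{\mathcal{R}(s)}_+$ is constant there using \eqref{qsigns} (that $q>0$ on $(x_1,x_0)$, since $q(x_1)>0$, $q(x_0)=0$, $q'(x_0)<0$, and $q$ is a downward parabola) together with the sign of $\sqrt{\mathcal{R}}_+$ on $(x_1,x_0)$ — a direct argument count of the three factors $\sqrt{s-x_0},\sqrt{s-x_1},\sqrt{s-x_2}$ at a point just above that interval.

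For property~3: expand $q(s)/\sqrt{\mathcal{R}(s)}$ at $s=\infty$. Since $q(s) = -s^2 + \tfrac12(x_0+x_1+x_2)s + q_0$ and $\sqrt{\mathcal{R}(s)} = s^{3/2}(1 - (x_0+x_1+x_2)/s + \cdots)^{1/2}$, the ratio is $-s^{-1/2}(1 + a_1 s^{-1} + a_2 s^{-2}+\cdots)$ for explicit $a_i$ in terms of $x_0,x_1,x_2$ and $q_0=\tfrac{x_0}{2}(x_0-x_1-x_2)$; integrating term by term from a large reference point gives \eqref{asymp g 2cuts} after collecting the $z^{3/2}$ and $z^{-1/2}$ coefficients (the $z^{1/2}$ and $\log z$ terms must cancel — they do, because the residue of $q/\sqrt{\mathcal{R}}$ at infinity vanishes, which is exactly the content of the first equation in \eqref{system for q}, i.e.\ $m_{11}q_0=\tilde m_1$). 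The claim $\re\mathfrak{g}(z)\to+\infty$ along $\arg(z-x_2)=\pm2\pi/3$ then follows from the leading term $-\tfrac23 z^{3/2}$: on those rays $z^{3/2}$ has argument $\pm\pi$, so $-\tfrac23 z^{3/2}$ has large positive real part.

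Property~4 is the main obstacle. One must show $\re\mathfrak{g}\ge0$ on a full neighborhood $\mathcal{V}$ of $(-\infty,x_0)$ containing the two sectors in \eqref{full sector at -inf}, with equality exactly on $(-\infty,x_2]\cup[x_1,x_0]$. My plan: first establish $\re\mathfrak{g}=0$ on $(-\infty,x_2]\cup[x_1,x_0]$ (immediate from $\mathfrak{g}_++\mathfrak{g}_-=0$ plus the reflection symmetry, which forces $\mathfrak{g}_\pm$ purely imaginary there) and $\re\mathfrak{g}>0$ on $(x_2,x_1)$ (there $\mathfrak{g}_\pm = \pm\tfrac{i\Omega}{2} + (\text{real part})$; show the real part is strictly positive by writing it as an integral of $q/\sqrt{\mathcal{R}}$ over $(x_2,x_1)$ with a definite sign). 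Then $h:=\re\mathfrak{g}$ is harmonic off the cuts, vanishes on part of the real axis and is positive elsewhere on it; I would use the maximum principle / the structure of the zero level set of a harmonic function together with the local behavior of $\mathfrak{g}$ near the branch points $x_0,x_1,x_2$ (where $\mathfrak{g}(z)-\mathfrak{g}(x_j) \sim c_j (z-x_j)^{3/2}$ at $x_1,x_0$ and $\sim c_2(z-x_2)^{1/2}$ at $x_2$, giving the standard three-fold and two-fold level-line pattern) to conclude that the region $\{\re\mathfrak{g}>0\}$ contains a genuine neighborhood of the open interval $(-\infty,x_2)\cup(x_1,x_0)$ minus the cuts, and, via the large-$z$ asymptotics from property~3, the two unbounded sectors. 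The delicate point is ruling out that the zero level set bends back and re-enters any such neighborhood; this requires a global argument, for instance tracking the level lines emanating from each branch point and using that $\re\mathfrak{g}$ is positive and $\to+\infty$ on the rays $\arg(z-x_2)=\pm2\pi/3$ to trap them. I expect this topological control of the level set — essentially the sign chart of $\re\mathfrak{g}$ in the whole plane — to be where the real work lies; everything else is bookkeeping on the explicit formula.
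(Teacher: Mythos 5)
Your treatment of items 1--3 follows the same route as the paper (read everything off the integral \eqref{g function def 2 cut}), but you have misplaced the one non-formal input. The cancellation of the $\log z$ and $z^{1/2}$ terms in item 3 is \emph{automatic}: $q/\sqrt{\mathcal{R}}$ expands in half-integer powers only (so no $z^{-1}$ term, hence no logarithm, and the residue at infinity vanishes identically), and the $z^{-1/2}$ coefficient of $\mathfrak{g}'$ vanishes because the coefficient of $z$ in $q$ is $\tfrac12(x_0+x_1+x_2)$ --- this is built into the definition \eqref{qPolynomial}, not into the equation $m_{11}q_0=\tilde m_1$. What $m_{11}q_0=\tilde m_1$ (equivalently $\mathcal{F}(x_0)=0$, i.e.\ $\int_{x_2}^{x_1}q(s)/\sqrt{\mathcal{R}(s)}\,ds=0$) actually buys is the statement you gloss over: for $z\in(-\infty,x_2)$ the sum $\mathfrak{g}_+(z)+\mathfrak{g}_-(z)$ picks up the contribution $2\int_{x_1}^{x_2}q(s)/\sqrt{\mathcal{R}(s)}\,ds$ from the middle interval, where $\sqrt{\mathcal{R}}$ is continuous; no ``deformation argument'' makes this constant vanish --- it is zero precisely because of the defining equation for $x_0$. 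The same condition is what kills the constant term $\mathfrak{g}_0$ in the expansion at infinity (the paper deduces $\mathfrak{g}_0=0$ from the already-proved jump \eqref{jump1 g 2cuts}); your item 3 never addresses this constant at all, and without it the claimed form \eqref{asymp g 2cuts} is false for generic basepoint choices.

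The genuine gap is item 4: you explicitly leave open the step of ruling out that the zero level set of $\re\mathfrak{g}$ re-enters the neighborhood, so what you offer there is a plan rather than a proof, and two of the facts you would feed into the level-line analysis are incorrect. First, $q/\sqrt{\mathcal{R}}$ does \emph{not} have a definite sign on $(x_2,x_1)$: $q$ vanishes at its second root $-\tfrac12(x_0-x_1-x_2)\in(x_2,x_1)$, so positivity of $\re\mathfrak{g}$ there comes from the fact that $\re\mathfrak{g}$ increases and then decreases along the interval with value $0$ at both endpoints, not from a one-signed integrand. Second, the local exponent at $x_1$ is $1/2$, not $3/2$ (only $x_0$ gives a $3/2$-power, since $q(x_0)=0$ while $q(x_1)>0$), which changes the level-line pattern you invoke. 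The paper closes item 4 without any global topology of the zero set: since $q$ has a fixed sign on each of $(-\infty,x_2)$ and $(x_1,x_0)$, $\im\mathfrak{g}_+$ is monotone along the cuts, so the Cauchy--Riemann equations give $\re\mathfrak{g}>0$ immediately off the cuts; uniformity of the thickness of the collar is obtained from the local expansions at $x_0,x_1,x_2$ (with the signs $q'(x_0)<0$, $q(x_1)>0$, $q(x_2)<0$) together with the asymptotics \eqref{asymp g 2cuts} at infinity. Because $\mathcal{V}$ may be taken arbitrarily thin, no control of where the zero level set goes globally is needed. You should either carry out your global harmonic-function argument in full or switch to this local argument.
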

\begin{proof}
The analyticity of $\mathfrak{g}$ follows from (\ref{g function def 2 cut}). The symmetry $\mathfrak{g}(z) = \overline{\mathfrak{g}(\overline{z})}$ follows from the fact that $q$ has real coefficients. 
The jumps \eqref{jump for R 2cuts} combined with $\int_{x_2}^{x_1} \frac{q(s)}{\sqrt{\mathcal{R}(s)}} ds=-\mathcal{F}(x_0)=0$ imply \eqref{jump1 g 2cuts}. The jump \eqref{jump3 g 2cuts} is a consequence of \eqref{jump for R 2cuts} and \eqref{g function def 2 cut}; the fact that $\Omega > 0$ follows from (\ref{qsigns}) and (\ref{g function def 2 cut}). Expanding $\mathfrak{g}'(z)$ as $z \to \infty$, and then integrating these asymptotics gives
\begin{equation*}
\mathfrak{g}(z) = -\frac{2}{3}z^\frac{3}{2} + \mathfrak{g}_{0} +\frac{1}{4}\left((x_1-x_2)^2-x_0(3x_0-2x_1-2x_2)\right)z^{-\frac{1}{2}} + \bigO(z^{-\frac{3}{2}}) \qquad \mbox{as } z \to \infty,
\end{equation*}
for some $\mathfrak{g}_{0} \in \mathbb{C}$. Equation \eqref{jump1 g 2cuts} implies that $\mathfrak{g}_{0}=0$, which proves (\ref{asymp g 2cuts}). Equation (\ref{asymp g 2cuts}) implies that there exists an $M>0$ such that $\re \mathfrak{g}(z)\geq 0$ for all $z \in \C$ with $|z|\geq M$ and $\arg z \in (-\pi,-\frac{2\pi}{3}] \cup [\frac{2\pi}{3},\pi)$. Also, from (\ref{jump1 g 2cuts}) and the symmetry $\mathfrak{g}(z) = \overline{\mathfrak{g}(\overline{z})}$, we deduce that $\re \mathfrak{g}_+(z) = \re \mathfrak{g}_-(z)=0$ for $z \in (-\infty,x_{2}]\cup [x_{1},x_{0}]$. 
Since $q(z)<0$ for $z \in (-\infty,x_{2})$ and $q(z)>0$ for $z \in (x_{1},x_{0})$ by (\ref{qsigns}), the imaginary part of $\mathfrak{g}_{+}(x)$ is decreasing as $x \in (-\infty,x_{2})\cup(x_{1},x_{0})$ increases. Hence, by the Cauchy-Riemann equations, for each $x \in (-\infty,x_{2})\cup(x_{1},x_{0})$, there exists $\epsilon = \epsilon(x)>0$ such that $\re \mathfrak{g}(x + i u \epsilon(x))>0$ for all $u \in (0,1]$. Assertion 4 will follow if we can show that $\epsilon$ can be chosen independently of $x$. This can be achieved by a local analysis of $\mathfrak{g}$ near each of the points $x_{0}, x_{1}, x_{2}$. As $z \to x_{0}$, since $q(x_{0})=0$, we have $\mathfrak{g}(z) \sim \frac{2}{3}q'(x_{0})(z-x_{0})^{3/2}/\sqrt{(x_{0}-x_{1})(x_{0}-x_{2})} $. Since $q'(x_{0})<0$, there exists a small neighborhood $V_{0}$ of $x_{0}$ such that $\re \mathfrak{g}(z) \geq 0$ for $z \in V_{0} \cap \{\re z \leq x_{0}\}$, with equality only if $z \in [x_1,x_{0}]\cap V_0$. As $z \to x_{1}$, we have $\mathfrak{g}(z) \sim 2 q(x_{1})\sqrt{z-x_{1}}/\sqrt{(x_{1}-x_{2})(x_{0}-x_{1})}$. Recalling that $q(x_{1})>0$, $\re \mathfrak{g}(z) \geq 0$ in a full open neighborhood $V_1$ of $x_{1}$, with equality only if $z \in [x_{1},x_0]\cap V_1$. The local analysis near $x_{2}$ is similar.
\end{proof}

\begin{remark}\label{OmegaRemark}
For future reference, we note that the equation $F(x_{0})=0$ can be rewritten as
\begin{equation}\label{x0 implicit eq}
\frac{\textbf{E}(k)}{\textbf{K}(k)}=\frac{-2(x_0-x_1)}{x_0+x_1+x_2}, \qquad \mbox{where} \qquad k:=\sqrt{\frac{x_1-x_2}{x_0-x_2}},
\end{equation}
and $\textbf{K}$, $\textbf{E}$ are the complete elliptic integrals of the first and second kind (see \cite[Eqs. 8.111.2 and 8.111.3]{GRtable})
\begin{align*}
\textbf{K}(k):=\int_0^{\frac{\pi}{2}}\frac{dt}{\sqrt{1-k^2\sin^2t}}, ~~~ \textbf{E}(k):=\int_0^{\frac{\pi}{2}} \sqrt{1-k^2\sin^2t}~dt.
\end{align*}
Also, using \cite[Eqs. 3.131.6, 3.131.11, 3.132.5 and 3.141.23]{GRtable} and \eqref{x0 implicit eq}, the constant $\Omega$ can be written as 
\begin{align}\label{OmegaEllipticInt}
    \Omega&=\frac{2}{3}\sqrt{x_0-x_2}(x_0+x_1+x_2)\left[\textbf{K}(k')\left(1-\frac{\textbf{E}(k)}{\textbf{K}(k)}\right)-\textbf{E}(k')\right], 
\end{align}
where $k':=\sqrt{\frac{x_0-x_1}{x_0-x_2}}=\sqrt{1-k^2}$.
\end{remark}

\subsection{Rescaling and opening of the lenses}\label{Subsection: T to S}
The first transformation $\Psi \to T$ of the steepest descent analysis is defined by
\begin{equation}\label{def of T}
T(z) = \begin{pmatrix}
r^{-\frac{1}{4}} & \frac{i}{4}\left(2x_0(x_1+x_2)-3x_0^2+(x_1-x_2)^2\right)r^{\frac{7}{4}} \\
0 & r^{\frac{1}{4}}
\end{pmatrix} \Psi(rz;rx_{1},rx_{2})e^{-r^\frac{3}{2}\mathfrak{g}(z)\sigma_{3}}.
\end{equation}
The asymptotics of $T(z)$ as $z \to \infty$ can be computed using the asymptotics \eqref{eq:psiasympinf} of $\Psi$ and \eqref{asymp g 2cuts} of $\mathfrak{g}$. The first matrix on the right-hand side of \eqref{def of T} is chosen to compensate for the behavior \eqref{asymp g 2cuts} of $\mathfrak{g}(z)$ as $z\to\infty$. It follows that
\begin{equation}
\label{eq:Tasympinf}
T(z) = \left( I + \Or\left(z^{-1}\right) \right) z^{\frac{\sigma_3}{4}} M^{-1} \qquad \mbox{as } z \to \infty,
\end{equation}
where the complex powers are defined using the principal branch. Using \eqref{jump1 g 2cuts}, we note the following factorization of the jump $T_{-}(z)^{-1}T_{+}(z)$ for $z \in (x_1,x_0)$:
\begin{align}\label{factorization jumps}
\begin{pmatrix}
e^{-2 r^\frac{3}{2}\mathfrak{g}_{+}(z)} & 1 \\
0 & e^{-2 r^\frac{3}{2}\mathfrak{g}_{-}(z)}
\end{pmatrix} = \begin{pmatrix}
1 & 0 \\
e^{-2 r^\frac{3}{2}\mathfrak{g}_{-}(z)} & 1
\end{pmatrix} \begin{pmatrix}
0 & 1 \\
-1 & 0
\end{pmatrix} \begin{pmatrix}
1 & 0 \\
e^{-2r^\frac{3}{2}\mathfrak{g}_{+}(z)} & 1
\end{pmatrix}.
\end{align}
Let $\tilde\gamma_{+}$ and $ \tilde\gamma_{-}$ be two simple curves oriented from $x_{1}$ to $x_{0}$ lying in the upper and lower half-planes, respectively, see Figure \ref{fig:contour for S}. 
\begin{figure}
\centering
\begin{tikzpicture}
\draw[fill] (0,0) circle (0.05);
\draw (0,0) -- (5,0);
\draw (0,0) -- (120:3);
\draw (0,0) -- (-120:3);
\draw (0,0) -- (-3,0);
\draw (5,0) -- (8,0);

\draw (3,0) .. controls (3.5,1) and (4.5,1) .. (5,0);
\draw (3,0) .. controls (3.5,-1) and (4.5,-1) .. (5,0);
%\draw (0,0) .. controls (1.5,1) .. (3,0);

\draw[fill] (3,0) circle (0.05);
\draw[fill] (5,0) circle (0.05);
%\draw[fill] (8,0) circle (0.05);

\node at (0.15,-0.3) {$x_2$};
\node at (2.9,-0.3) {$x_1$};
\node at (5.2,-0.3) {$x_0$};
\node at (-3,-0.3) {$-\infty$};
\node at (8,-0.3) {$+\infty$};

\draw[black,arrows={-Triangle[length=0.18cm,width=0.12cm]}]
(-120:1.5) --  ++(60:0.001);
\draw[black,arrows={-Triangle[length=0.18cm,width=0.12cm]}]
(120:1.3) --  ++(-60:0.001);
\draw[black,arrows={-Triangle[length=0.18cm,width=0.12cm]}]
(180:1.5) --  ++(0:0.001);

\draw[black,arrows={-Triangle[length=0.18cm,width=0.12cm]}]
(0:4.08) --  ++(0:0.001);
\draw[black,arrows={-Triangle[length=0.18cm,width=0.12cm]}]
(0:1.6) --  ++(0:0.001);

\draw[black,arrows={-Triangle[length=0.18cm,width=0.12cm]}]
(4.08,0.75) --  ++(0:0.001);
\draw[black,arrows={-Triangle[length=0.18cm,width=0.12cm]}]
(4.08,-0.75) --  ++(0:0.001);
\draw[black,arrows={-Triangle[length=0.18cm,width=0.12cm]}]
(6.5,0) --  ++(0:0.001);

\node at (4,1) {$\tilde\gamma_{+}$};
\node at (4,-1) {$\tilde\gamma_{-}$};
\end{tikzpicture}
\caption{The jump contour $\Sigma_{S}$.}
\label{fig:contour for S}
\end{figure}
The second transformation $T \mapsto S$ is defined by
\begin{equation}\label{def:S}
S(z)=T(z)\left\{ \hspace{-0.1cm} \begin{array}{l l}
\begin{pmatrix}
1 & 0 \\
-e^{-2r^\frac{3}{2}\mathfrak{g}(z)} & 1
\end{pmatrix}, & z \mbox{ is below } \tilde{\gamma}_{+} \mbox{ and } \im z  > 0, \\
\begin{pmatrix}
1 & 0 \\
e^{-2r^\frac{3}{2}\mathfrak{g}(z)} & 1
\end{pmatrix}, & z \mbox{ is above } \tilde{\gamma}_{-} \mbox{ and } \im z < 0, \\
I, & \mbox{otherwise}.
\end{array} \right.
\end{equation}
$S$ satisfies the following RH problem, whose properties can be deduced from those of $\Psi$, combined with Lemma \ref{g_lemma}, the definition \eqref{def of T} of $T$ and the factorization \eqref{factorization jumps}. 
\subsubsection*{RH problem for $S$}
\begin{enumerate}[label={(\alph*)}]
\item[(a)] $S : \C \backslash \Sigma_{S} \rightarrow \C^{2\times 2}$ is analytic, where $\Sigma_{S}:=\mathbb{R}\cup\gamma_{+}\cup \gamma_{-}$ and $\gamma_{\pm} := \tilde\gamma_{\pm} \cup \big(e^{\pm \frac{2\pi i}{3}} (-\infty,x_2)\big)$. The orientation of $\Sigma_{S}$ is shown in Figure \ref{fig:contour for S}.
\item[(b)] The jumps for $S$ are given by
\begin{align}
& S_{+}(z) = S_{-}(z)\begin{pmatrix}
0 & 1 \\ -1 & 0
\end{pmatrix}, & & z \in (-\infty,x_2)\cup (x_1,x_0), \label{SInf0 jump} \\
& S_{+}(z) = S_{-}(z)\begin{pmatrix}
1 & 0 \\
e^{-2r^\frac{3}{2}\mathfrak{g}(z)} & 1
\end{pmatrix}, & & z \in \gamma_{+}\cup \gamma_{-}, \label{Slense jump} \\
& S_{+}(z) = S_{-}(z)e^{-i \Omega r^\frac{3}{2}\sigma_{3}}, & & z \in (x_2,x_1), \label{S0y2 jump} \\
& S_{+}(z) = S_{-}(z)\begin{pmatrix}
1 & e^{2r^{\frac{3}{2}}\mathfrak{g}(z)} \\
0 & 1
\end{pmatrix}, & & z \in (x_0,\infty).
\end{align}
\item[(c)] As $z \rightarrow \infty$, we have $S(z) = \left( I + \Or\left(z^{-1}\right) \right) z^{\frac{\sigma_3}{4}} M^{-1}.$
\item[(d)] As $z \to x_j$, $j=0,1,2$, we have $S(z) = \Or( \log(z-x_j))$.
\end{enumerate}
Choose $\mathcal{V}$ as in Lemma \ref{g_lemma}. Deforming the contours $\gamma_{+}$ and $\gamma_{-}$ if necessary, we may assume that they lie in $\mathcal{V}$. Since $\re \mathfrak{g}(z) \geq 0$ for all $z \in \mathcal{V}$ with equality only if $z \in (-\infty,x_{2}]\cup[x_{1},x_{0}]$,
%and since $\re \mathfrak{g}(z) \to +\infty$ as $z \to \infty$ along the rays $\arg(z-x_2) = \pm 2\pi/3$, 
the jumps for $S$ are exponentially close to $I$ as $r \to + \infty$ on $\gamma_{+}\cup \gamma_{-}$. This convergence is uniform except for $z$ in small neighborhoods of $x_j$, $j=0,1,2$.

\section{Global parametrix}\label{section: global parametrix}
Ignoring the exponentially small jumps for $S$, and ignoring small neighborhoods of $x_{0},x_{1},x_{2}$, we are led to consider the following RH problem.

\subsubsection*{RH problem for $P^{(\infty)}$}
\begin{enumerate}[label={(\alph*)}]
\item[(a)] $P^{(\infty)} : \C \backslash (-\infty,x_0] \rightarrow \C^{2\times 2}$ is analytic.
\item[(b)] The jumps for $P^{(\infty)}$ are given by
\begin{align}
& P^{(\infty)}_{+}(z) = P^{(\infty)}_{-}(z)\begin{pmatrix}
0 & 1 \\ -1 & 0
\end{pmatrix}, & & z \in (-\infty,x_2)\cup (x_1,x_0), \nonumber \\
& P^{(\infty)}_{+}(z) = P^{(\infty)}_{-}(z)e^{-i\Omega r^\frac{3}{2} \sigma_{3}}, & & z \in (x_2,x_1). \label{jumps for Pinf on (x2,x1)}
\end{align}
\item[(c)] As $z \rightarrow \infty$, we have $P^{(\infty)}(z) = \left( I + \Or\left(z^{-1}\right) \right) z^{\frac{\sigma_3}{4}} M^{-1}$.
\item[(d)] As $z \to x_j$, $j=0,1,2$, we have $P^{(\infty)}(z) = \bigO((z-x_{j})^{-\frac{1}{4}})$.
\end{enumerate}
The above RH problem was solved in \cite{BCL19} (the points $y_{1}$, $y_{2}$ and $0$ in \cite{BCL19} should be identified with $x_{0}$, $x_{1}$ and $x_{2}$ in the present paper, respectively). Consider the function $\varphi : \mathbb{C}\setminus (-\infty,x_{0}] \to \mathbb{C}$ defined by 
\begin{align}\label{def of u}
\varphi(z) = \int_{x_0}^{z} \omega, \qquad \omega = \frac{c_{0}dz}{\sqrt{\mathcal{R}(z)}}, \qquad c_{0} =\frac{\sqrt{x_0-x_2}}{4\textbf{K}(k)},
\end{align}
where the path of integration lies in $\mathbb{C}\setminus (-\infty,x_{0}]$, and $k$ has been defined in \eqref{x0 implicit eq}. Note that the holomorphic differential $\omega_{1}$ of \eqref{def of omegaj} with $g=1$ coincides with $\omega$. By \cite{BCL19}, $P^{(\infty)}$ is given by
\begin{align}
&P^{(\infty)}(z)=\begin{pmatrix}
\ds \frac{1}{\mathcal{G}(\frac{1}{2})} & \ds \frac{-ic_{\mathcal{G}}}{\mathcal{G}(0)} \\
0 & \ds \frac{1}{\mathcal{G}(0)}
\end{pmatrix}\frac{\beta(z)^{\sigma_3}}{\sqrt{2}}\begin{pmatrix}
\ds \mathcal{G}(-\varphi(z)) & \ds -i\mathcal{G}(\varphi(z)) \\
\ds -i\mathcal{G}(-\varphi(z)-\tfrac{1}{2}) & \ds \mathcal{G}(\varphi(z)-\tfrac{1}{2}) 
\end{pmatrix} \label{def of P inf hat two cuts FINAL}
\end{align}
with
\begin{align}\label{nu_r_relation}
& \mathcal{G}(z)=\frac{\theta(z + \nu)}{\theta(z)}, \quad \nu = - \frac{\Omega}{2\pi}r^\frac{3}{2}, \quad c_{\mathcal{G}} = 2c_{0} (\log\mathcal{G})'(\tfrac{1}{2}), \quad \beta(z) = \frac{(z-x_0)^{1/4}(z-x_2)^{1/4}}{(z-x_1)^{1/4}},
\end{align}
where the principal branch is taken for the roots. The function $\theta$ is defined by \eqref{def of theta genus g} with $g=1$, and is associated to the parameter $\tau \in i \mathbb{R}^{+}$ defined in \eqref{def of tau genus g}.

%In Section \ref{section:smallnorm}, the first four terms of the asymptotics of $P^{(\infty)}(z)$ as $z \to x_j$, $\im z> 0$, $j=0,1,2$, will be needed.  We skip the details as the computations are essentially unchanged from \cite{BCL19}.

\section{Local parametrices}\label{subsec:Localparametrix}
The goal of this section is to construct local approximations (called ``parametrices") of $S$ in small open disks $\mathbb{D}_{x_j}$ centered at $x_{j}$, $j=0,1,2$. The local parametrix $P^{(x_j)}$ possesses the same jumps as $S$ inside $\mathbb{D}_{x_j}$, $P^{(x_j)}=\bigO(\log(z-x_j))$ as $z\to x_j$, and satisfies $P^{(x_{j})}(z)P^{(\infty)}(z)^{-1} = I+o(1)$ as $r\to +\infty$ uniformly for $z \in \partial \mathbb{D}_{x_{j}}$. In our case, the local parametrices are standard: $P^{(x_0)}$ can be built in terms of Airy functions (as in \cite{DKMVZ1}), and $P^{(x_1)}$ and $P^{(x_2)}$ in terms of Bessel functions (as in \cite{DIZ}). 

\subsection{Parametrix at $x_0$}

The function $f_{0}(z):=\left(-\frac{3}{2}\mathfrak{g}(z)\right)^\frac{2}{3}$ is a conformal map from $\mathbb{D}_{x_{0}}$ to a neighborhood of $0$, and satisfies
\begin{align}
    f_{0}(z)&=c_{x_{0}}(z-x_{0})(1+c_{x_0}^{(2)}(z-x_0)+c_{x_0}^{(3)}(z-x_0)^2+\bigO((z-x_{0})^3)) \qquad \mbox{as } z \to x_{0}, \label{x0coordExpansion} \\
    c_{x_{0}}&=\frac{(-\tilde{q}(x_0))^\frac{2}{3}}{(x_0-x_1)^\frac{1}{3}(x_0-x_2)^\frac{1}{3}}>0, \qquad c_{x_0}^{(2)}=\frac{-2x_0+x_1+x_2}{5(x_0-x_1)(x_0-x_2)}-\frac{2}{5\tilde{q}(x_0)}, \\
    175c_{x_0}^{(3)}&=\frac{43x_0^2-43x_0x_1+17x_1^2-43x_0x_2+9x_1x_2+17x_2^2}{(x_0-x_1)^2(x_0-x_2)^2}+\frac{18(2x_0-x_1-x_2)}{(x_0-x_1)(x_0-x_2)\tilde{q}(x_0)}-\frac{7}{\tilde{q}(x_0)^2}, \nonumber
\end{align}
where we recall that $-\tilde q(x_0)>0$. The model RH problem of \cite{DKMVZ1}, which is needed for the construction of $P^{(x_{0})}$, is presented in Appendix \ref{subsec:Airyparametrix} for convenience; its solution is denoted by $\Phi_{\mathrm{Ai}}$. Deforming $\tilde\gamma_{\pm}$ if necessary, we may assume that $f_0(\tilde\gamma_{\pm}\cap\mathbb{D}_{x_{0}})\subset e^{\pm\frac{2\pi i}{3}}\mathbb{R}^+$. It can be verified that
\begin{align}
& P^{(x_{0})}(z) = E_{x_{0}}(z)\Phi_{\mathrm{Ai}}(rf_{0}(z))e^{-r^{\frac{3}{2}}\mathfrak{g}(z)\sigma_{3}}, & & E_{x_{0}}(z) = P^{(\infty)}(z)M^{-1}\Big(rf_{0}(z)\Big)^{\frac{\sigma_{3}}{4}}, \label{Px0}
\end{align}
where $E_{x_0}(z)$ is analytic for $z\in\mathbb{D}_{x_0}$. Furthermore, due to \eqref{x0coordExpansion} and \eqref{Asymptotics Airy}, 
\begin{equation}\label{Px0asymp}
P^{(x_{0})}(z)P^{(\infty)}(z)^{-1}=I+\frac{P^{(\infty)}(z)\Phi_{\mathrm{Ai},1}P^{(\infty)}(z)^{-1} }{r^\frac{3}{2}f_{0}(z)^\frac{3}{2}} + \bigO(r^{-3})
\end{equation}
as $r\to +\infty$ uniformly for $z\in\partial\mathbb{D}_{x_{0}}$.

\subsection{Parametrices at $x_1$ and $x_2$}
We define $\tilde{f}(z) = \frac{1}{4}\left(\mathfrak{g}(z)\mp\frac{i\Omega}{2}\right)^2$, where we take the $-/+$ sign when $z$ is above/below the real axis. As $z\to x_j$, $j=1,2,$ we have
\begin{align}
& \tilde{f}(z) =-c_{x_1}(z-x_1)\big(1+c_{x_{1}}^{(2)}(z-x_1)+\bigO((z-x_1)^{2})\big), & & c_{x_1}=\frac{q^2(x_1)}{(x_1-x_2)(x_0-x_1)}, & & \mbox{as } z \to x_{1}, \label{cx1} \\
& \tilde{f}(z) =c_{x_{2}}(z-x_{2})\big(1+c_{x_{2}}^{(2)}(z-x_{2})+\bigO((z-x_{2})^{2})\big), & & c_{x_{2}}=\frac{q^2(x_{2})}{(x_0-x_2)(x_1-x_2)}, & & \mbox{as } z \to x_{2}. \label{cx2}
\end{align}
Again, deforming the lenses if necessary, we may assume that $\tilde{f}(\tilde\gamma_{\pm}\cap\mathbb{D}_{x_{1}})\subset e^{\mp\frac{2\pi i}{3}}\mathbb{R}^+$ and $\tilde{f}(\gamma_{\pm}\cap\mathbb{D}_{x_{2}})\subset e^{\pm\frac{2\pi i}{3}}\mathbb{R}^+$. The local parametrices $P^{(x_1)}$ and $P^{(x_2)}$ are given by
\begin{align}
& P^{(x_1)}(z)=E_{x_1}(z)\sigma_{3}\Phi_{\mathrm{Be}}(r^{3} \tilde{f}(z))\sigma_{3}e^{-r^{\frac{3}{2}}g(z)\sigma_{3}}, & & E_{x_1}(z)=P^{(\infty)}(z)e^{\pm\frac{i \Omega}{2}r^{\frac{3}{2}}\sigma_{3}}M\Big(2\pi r^{\frac{3}{2}}\tilde{f}(z)^{\frac{1}{2}} \Big)^{\frac{\sigma_{3}}{2}}, \label{Px1} \\
&  P^{(x_{2})}(z)=E_{x_{2}}(z)\Phi_{\mathrm{Be}}(r^{3} \tilde{f}(z))e^{-r^{\frac{3}{2}}g(z)\sigma_{3}}, & & E_{x_{2}}(z) = P^{(\infty)}(z)e^{\pm \frac{i \Omega}{2}r^{\frac{3}{2}}\sigma_{3}}M^{-1}\Big(2\pi r^{\frac{3}{2}}\tilde{f}(z)^{\frac{1}{2}} \Big)^{\frac{\sigma_{3}}{2}}, \label{Px2}
\end{align}
where $E_{x_j}(z)$ is analytic in $\mathbb{D}_{x_j}$, $j=1,2$, and $\Phi_{\mathrm{Be}}$ is the solution of the Bessel model RH problem recalled in Appendix \ref{subsec:Besselparametrix}. As $r \to + \infty$, we have $P^{(x_{j})}(z)P^{(\infty)}(z)^{-1} = I + J_{R}^{(1)}(z)r^{-\frac{3}{2}} + \bigO(r^{-3})$ uniformly for $z \in \partial\mathbb{D}_{x_{j}}$, for a certain matrix $J_{R}^{(1)}(z)$ that will be computed in \eqref{explicit expression for the jumps Jrp1p}.

\section{Small norm problem}\label{section:smallnorm}
%In this section we show that for sufficiently large $r$, $P^{(\infty)}(z)$ approximates $S(z)$ for $z \in \mathbb{C}\setminus \cup_{j=0}^2\mathbb{D}_{x_j}$ and $P^{(x_j)}(z)$ approximates $S(z)$ for $z \in \mathbb{D}_{x_j}$, $j=0,1,2$. 
Let us define
\begin{equation}\label{errorMatrix}
R(z)=\begin{cases}
        S(z)P^{(x_j)}(z)^{-1}, &z\in\mathbb{D}_{x_j}, ~ j=0,1,2, \\
        S(z)P^{(\infty)}(z)^{-1}, &z\in\mathbb{C}\setminus\bigcup_{j=0}^2 \overline{\mathbb{D}}_{x_j}.
    \end{cases}
\end{equation}
From the asymptotics of $S(z)$ and $P^{(\infty)}(z)$ as $z \to \infty$, we have $R(z) = I + \bigO(z^{-1})$ as $z \to \infty$. Also, by definition of $P^{(x_j)}$, $j=0,1,2$, $R(z)$ is analytic for $z\in \cup_{j=0}^{2}\mathbb{D}_{x_j}$. Therefore, the jump contour for $R$, denoted by $\Sigma_{R}$, is given by
\begin{align*}
\Sigma_{R} = \bigg((x_{0},+\infty) \cup \gamma_{+} \cup \gamma_{-}  \cup \bigcup_{j=0}^{2} \partial \mathbb{D}_{x_{j}} \bigg)  \setminus \bigcup_{j=0}^{2}  \mathbb{D}_{x_j},
\end{align*}
where we orient (for convenience) the boundaries of the disks in the clockwise direction, see Figure \ref{fig:SigmaR}. 
\begin{figure}
\centering
\begin{tikzpicture}

%create cuts on real axis and lense at x_2
\draw[fill] (0,0) circle (0.05);
\draw[fill] (1.5,0) circle (0.05);
\draw[fill] (3,0) circle (0.05);
\draw (-0.25,0.43) -- (120:2.5);
\draw (-0.25,-0.43) -- (-120:2.5);
\draw (3.5,0) -- (6,0);

%create parametrix boundaries
\draw (0,0) circle (0.5);
\draw (1.5,0) circle (0.5);
\draw (3,0) circle (0.5);

%create lenses
\draw (1.75,0.43) .. controls (2,0.75) and (2.5,0.75) .. (2.75,0.43);
\draw (1.75,-0.43) .. controls (2,-0.75) and (2.5,-0.75) .. (2.75,-0.43);

%label points
\node at (0.15,-0.25) {$x_2$};
\node at (1.4,-0.25) {$x_1$};
\node at (3.15,-0.25) {$x_0$};

%create arrows
\draw[black,arrows={-Triangle[length=0.18cm,width=0.12cm]}]
(-120:1.5) --  ++(60:0.001);
\draw[black,arrows={-Triangle[length=0.18cm,width=0.12cm]}]
(120:1.3) --  ++(-60:0.001);
\draw[black,arrows={-Triangle[length=0.18cm,width=0.12cm]}]
(0:4.5) --  ++(0:0.001);
\draw[black,arrows={-Triangle[length=0.18cm,width=0.12cm]}]
(2.33,0.675) --  ++(0:0.001);
\draw[black,arrows={-Triangle[length=0.18cm,width=0.12cm]}]
(2.33,-0.675) --  ++(0:0.001);
\draw[black,arrows={-Triangle[length=0.18cm,width=0.12cm]}]
(-0.5,0.09) --  ++(90:0.001);
\draw[black,arrows={-Triangle[length=0.18cm,width=0.12cm]}]
(1,0.09) --  ++(90:0.001);
\draw[black,arrows={-Triangle[length=0.18cm,width=0.12cm]}]
(2.5,0.09) --  ++(90:0.001);

\end{tikzpicture}
\caption{The jump contour $\Sigma_{R}$.}
\label{fig:SigmaR}
\end{figure}
It follows from the steepest descent analysis that
\begin{equation}\label{JR expansion}
J_{R}(z) := R_{-}(z)^{-1}R_{+}(z) = \begin{cases}
I + \bigO(e^{-\tilde{c} |rz|^{\frac{3}{2}}}), & \mbox{uniformly for } z \in \Sigma_{R}  \cup_{j=0}^{2} \partial \mathbb{D}_{x_{j}}, \\
I + \frac{J_{R}^{(1)}(z)}{r^{\frac{3}{2}}} + \bigO(r^{-3}), & \mbox{uniformly for } z \in \cup_{j=0}^{2} \partial \mathbb{D}_{x_{j}},
\end{cases}
\end{equation}
where $\tilde{c}>0$ is a sufficiently small constant, and $J_{R}^{(1)}(z)$ is given by
\begin{align}
J_{R}^{(1)}(z) = \begin{cases}
\frac{P^{(\infty)}(z)\Phi_{\mathrm{Ai,1}} P^{(\infty)}(z)^{-1}}{f_{0}(z)^{\frac{3}{2}}} & \mbox{if } z  \in \partial \mathbb{D}_{x_{0}}, \\[0.2cm]
\frac{P^{(\infty)}(z)e^{\pm \frac{i \Omega}{2}r^{\frac{3}{2}}\sigma_{3}}\sigma_{3}^{j} \Phi_{\mathrm{Be},1}\sigma_{3}^{j} e^{\mp \frac{i\Omega}{2}r^{\frac{3}{2}}}P^{(\infty)}(z)^{-1} }{\tilde{f}(z)^{1/2}} & \mbox{if } z  \in \partial \mathbb{D}_{x_{j}}, \; j=1,2.
\end{cases} \label{explicit expression for the jumps Jrp1p}
\end{align}
Since $\Omega \in \mathbb{R}$, $J_{R}^{(1)}(z) = \bigO(1)$ as $r \to + \infty$ uniformly for $z \in \cup_{j=0}^{2} \partial \mathbb{D}_{x_{j}}$. The jump matrix $J_{R}$ is uniformly close to the identity, thus $R(z)$ exists for sufficiently large $r$ and \cite{DKMVZ1,DKMVZ2,DeiftZhou}
\begin{equation}\label{large r asymptotics for R}
R(z)=I+\frac{R^{(1)}(z)}{r^\frac{3}{2}}+\bigO(r^{-3}), \qquad \mbox{where } \quad R^{(1)}(z)=\frac{1}{2\pi i}\int_{\bigcup_{j=0}^2\partial\mathbb{D}_{x_j}}\frac{J_R^{(1)}(\xi)}{\xi-z}~d\xi,
\end{equation}
as $r \to + \infty$ uniformly for $z\in\mathbb{C}\setminus\Sigma_{R}$, and these asymptotics can be differentiated with respect to $z$ without changing the error term. In Section \ref{section:integration1}, we will need explicit expressions for
\begin{align}\label{sandwich j}
\Big[E_{x_j}(x_j)^{-1} R^{(1)\prime}(x_j)E_{x_j}(x_j)\Big]_{21}, \qquad j=1,2.
\end{align} 
These quantities can be computed from \eqref{explicit expression for the jumps Jrp1p} and \eqref{large r asymptotics for R} by residue calculations. Let us define $\mathcal{J}_{x_{j}}(z) := [E_{x_j}(x_j)^{-1} J_{R}^{(1)}(z)E_{x_j}(x_j)]_{21}$ for $j=1,2$. By \eqref{explicit expression for the jumps Jrp1p}, as $z \to x_{j'}$, $j'=0,1,2$, we have
\begin{align}
& \mathcal{J}_{x_{j}}(z) = \sum_{k=-2}^{1} (\mathcal{J}_{x_{j}})_{x_{j'}}^{(k)}(z-x_{j'})^{k} + \bigO((z-x_{j'})^{2}), & & (\mathcal{J}_{x_{j}})_{x_{1}}^{(-2)} = (\mathcal{J}_{x_{j}})_{x_{2}}^{(-2)} = 0, \label{Jcal coeff def}
\end{align}
for certain matrices $(\mathcal{J}_{x_{j}})_{x_{j'}}^{(k)}$ that can be computed if needed. Since the circles $\partial\mathbb{D}_{x_j}$, $j=0,1,2,$ have clockwise orientation in \eqref{large r asymptotics for R}, we obtain
\begin{align}\label{sandwich in terms of Jcal}
\Big[E_{x_j}(x_j)^{-1} R^{(1)\prime}(x_j)E_{x_j}(x_j)\Big]_{21} = -(\mathcal{J}_{x_{j}})_{x_{j}}^{(1)}+\frac{2(\mathcal{J}_{x_{j}})_{x_0}^{(-2)}}{(x_0-x_j)^3}-\sum_{\substack{j'=0 \\ j'\neq j}}^2\frac{(\mathcal{J}_{x_{j}})_{x_{j'}}^{(-1)}}{(x_{j}-x_{j'})^{2}}, \qquad j=1,2.
\end{align}

\section{Proof of Theorem \ref{thm: main result}}\label{section:integration1}
Substituting the transformations $\Psi \mapsto T \mapsto S \mapsto R$ of Sections \ref{section:RH1}-\ref{section:smallnorm} into the differential identity (\ref{final formula diff identity}) and using the expansion \eqref{large r asymptotics for R} of $R$, we obtain the following asymptotics:
%(cf. \cite[Proposition 8.1]{BCL19}):
\begin{align}\label{partialrlogdet}
\partial_r\log F(r (x_{2},x_{1})) 
    = &\; I_1(r)  +  I_2(r)  +  I_3(r) + \bigO \big(r^{-\frac{5}{2}}\big) \quad \text{as $r \to +\infty$},
\end{align}
where 
\begin{align*}
& I_1(r) := r^2 \sum_{j=1}^2 (-1)^{j} x_j c_{x_j},
\qquad I_2(r) := \frac{1}{2\pi i r^{\frac{5}{2}}}\sum_{j=1}^2 (-1)^{j}x_j\Big[E_{x_j}(x_j)^{-1} R^{(1)\prime}(x_j)E_{x_j}(x_j)\Big]_{21},
	\\
& I_3(r) := \frac{1}{2\pi i r}  \sum_{j=1}^2 (-1)^{j}x_j \Big[ E_{x_j}(x_j)^{-1} E_{x_j}'(x_j)\Big]_{21},
\end{align*}
with $c_{x_j}$, $j=1,2$, given by \eqref{cx1}-\eqref{cx2}.
A straightforward calculation shows that
\begin{align}\label{I1final}
I_1(r) = c\partial_{r} r^{3}, 
\end{align}
where
\begin{align}\label{cConst}
    c =\frac{1}{12}\left[x_0^3+x_1^3+x_2^3-(x_0+x_1)(x_0+x_2)(x_1+x_2)\right]-\frac{q_0}{3}(x_0+x_1+x_2),
\end{align}
and the constant $q_0$ is defined in \eqref{qPolynomial}. By a simple rearrangement of the terms, it is easy to see that $c$ in \eqref{cConst} coincides with the constant $c$ defined in \eqref{def of C1 genus g} for $g=1$. This establishes the leading term in (\ref{F asymp genus g}) for $g = 1$; the two subleading terms are more complicated to evaluate.

\subsection{Explicit expression for $I_3(r)$}\label{subsection: third term}
From the definition of $E_{x_{j}}$, $j=1,2$, it is possible to obtain explicit expressions for $E_{x_j}(x_j)$ and $E_{x_j}'(x_j)$. These expressions are rather long, so we only write down $E_{x_1}(x_1)$ and $E_{x_1}'(x_1)$ (the expressions for $E_{x_2}(x_2)$ and $E_{x_2}'(x_2)$ are similar):  
\begin{multline*}
E_{x_1}(x_1)=e^{i\pi\nu}e^{-\frac{i\pi}{4}\sigma_3}\Bigg[\frac{\beta_{x_1}^{(-\frac{1}{4})}}{\mathcal{G}(\frac{1}{2})}\begin{pmatrix} \mathcal{G}(\frac{\tau}{2}) & \varphi_{x_1}^{(\frac{1}{2})}\mathcal{G}'(\frac{\tau}{2}) \\ 0 & 0 \end{pmatrix} \\
     +\frac{1}{\beta_{x_1}^{(-\frac{1}{4})}\mathcal{G}(0)}\begin{pmatrix} c_\mathcal{G} & c_\mathcal{G} \\ 1 & 1 \end{pmatrix} \begin{pmatrix} \frac{\tilde{\mathcal{G}}(\frac{1+\tau}{2})}{\varphi_{x_1}^{(\frac{1}{2})}} & 0 \\ 0 & \tilde{\mathcal{G}}'(\frac{1+\tau}{2}) \end{pmatrix}\Bigg]\left(2\pi r^\frac{3}{2}c_{x_1}^\frac{1}{2}\right)^\frac{\sigma_3}{2},
\end{multline*}
\begin{align*}
    &E_{x_1}'(x_1)=e^{i\pi\nu}e^{-\frac{i\pi}{4}\sigma_3}\left[\begin{pmatrix} \mathcal{G}(\frac{\tau}{2}) & \varphi_{x_1}^{(\frac{1}{2})}\mathcal{G}'(\frac{\tau}{2}) \\ 0 & 0 \end{pmatrix}\left(\frac{\beta_{x_1}^{(\frac{3}{4})}}{\mathcal{G}(\frac{1}{2})}I+\frac{c_{x_1}^{(2)}\beta_{x_1}^{(-\frac{1}{4})}}{4\mathcal{G}(\frac{1}{2})}\sigma_3\right)+\right. \\
    &+\frac{\beta_{x_1}^{(-\frac{1}{4})}}{\mathcal{G}(\frac{1}{2})}\begin{pmatrix} \frac{1}{2}(\varphi_{x_1}^{(\frac{1}{2})})^2\mathcal{G}''(\frac{\tau}{2}) & \varphi_{x_1}^{(\frac{3}{2})}\mathcal{G}'(\frac{\tau}{2})+\frac{1}{6}(\varphi_{x_1}^{(\frac{1}{2})})^3\mathcal{G}'''(\frac{\tau}{2}) \\ 0 & 0 \end{pmatrix}+\frac{1}{\beta_{x_1}^{(-\frac{1}{4})}\mathcal{G}(0)}\begin{pmatrix} c_\mathcal{G} & c_\mathcal{G} \\ 1 & 1 \end{pmatrix}\times \\
    &\left.\times\begin{pmatrix} \bigg(\frac{c_{x_1}^{(2)}}{4}-\frac{\beta_{x_1}^{(\frac{3}{4})}}{\beta_{x_1}^{(-\frac{1}{4})}}-\frac{\varphi_{x_1}^{(\frac{3}{2})}}{\varphi_{x_1}^{(\frac{1}{2})}}\bigg)\frac{\tilde{\mathcal{G}}(\frac{1+\tau}{2})}{\varphi_{x_1}^{(\frac{1}{2})}}+\frac{\varphi_{x_1}^{(\frac{1}{2})}}{2}\tilde{\mathcal{G}}''(\frac{1+\tau}{2}) & \hspace{-0.5cm} 0 \\ 0 & \hspace{-0.5cm} \frac{(\varphi_{x_1}^{(\frac{1}{2})})^2}{6}\tilde{\mathcal{G}}'''(\frac{1+\tau}{2})-\bigg(\frac{\beta_{x_1}^{(\frac{3}{4})}}{\beta_{x_1}^{(-\frac{1}{4})}}+\frac{c_{x_1}^{(2)}}{4}\bigg)\tilde{\mathcal{G}}'(\frac{1+\tau}{2}) \end{pmatrix}\right]\left(2\pi r^\frac{3}{2}c_{x_1}^\frac{1}{2}\right)^\frac{\sigma_3}{2},
\end{align*}
where $\widetilde{\mathcal{G}}(z) = \mathcal{G}(z)(z-\frac{1+\tau}{2})$, the coefficients $\beta_{x_1}^{(-\frac{1}{4})}$, $\beta_{x_1}^{(\frac{3}{4})}$, $\varphi_{x_1}^{(\frac{1}{2})}$, $\varphi_{x_1}^{(\frac{3}{2})}$, $c_{x_{1}}^{(2)}$ are defined through the expansions
\begin{align*}
& \beta(z) = \beta_{x_{1}}^{(-\frac{1}{4})}(z-x_{1})^{-\frac{1}{4}} + \beta_{x_{1}}^{(\frac{3}{4})}(z-x_{1})^{\frac{3}{4}} + \bigO((z-x_{1})^{\frac{7}{4}}), & & \text{as }  z\to x_1, \; \im z > 0, \\
& \varphi(z)=\varphi_{x_1}^{(\frac{1}{2})}(z-x_1)^\frac{1}{2}+\varphi_{x_1}^{(\frac{3}{2})}(z-x_1)^\frac{3}{2}+\bigO ((z-x_1)^\frac{5}{2}), & & \text{as }  z\to x_1, \; \im z > 0,
\end{align*}
and via \eqref{cx1}, and they are given by
\begin{align*}
& \beta_{x_{1}}^{(-\frac{1}{4})} = e^{\frac{\pi i }{4}}(x_{0}-x_{1})^{\frac{1}{4}}(x_{0}-x_{2})^{\frac{1}{4}}, \quad \beta_{x_{1}}^{(\frac{3}{4})} = \frac{e^{\frac{\pi i}{4}}(x_{0}-2x_{1}+x_{2})}{4(x_{0}-x_{1})^{\frac{3}{4}}(x_{1}-x_{2})^{\frac{3}{4}}}, \quad \varphi_{x_1}^{(\frac{1}{2})} = \frac{-2i c_{0}}{\sqrt{x_{0}-x_{1}}\sqrt{x_{1}-x_{2}}}, \\
& \varphi_{x_1}^{(\frac{3}{2})} = \frac{ic_{0}(x_{0}-2x_{1}+x_{2})}{3(x_{0}-x_{1})^{\frac{3}{2}}(x_{1}-x_{2})^{\frac{3}{2}}}, \qquad c_{x_{1}}^{(2)} = \frac{2q'(x_{1})}{3q(x_{1})}-\frac{x_{0}-2x_{1}+x_{2}}{3(x_{0}-x_{1})(x_{1}-x_{2})}.
\end{align*}
Each of the expressions for $E_{x_1}(x_1)$, $E_{x_2}(x_2)$, $E_{x_1}'(x_1)$ and $E_{x_2}'(x_2)$ contain the $\theta$-function and its derivatives at the points $0$, $\frac{1}{2}$, $\frac{\tau}{2}$ and $\frac{1+\tau}{2}$. We use $\theta$-function identities to simplify these expressions following the strategy of \cite{BCL19}.

\medskip The function $\varphi$ can be analytically continued to the Riemann surface $X$ defined in the introduction, and its quotient $\varphi_{A}(z):= \varphi(z) \mod (\mathbb{Z}+\tau \mathbb{Z})$ is a bijection from $X$ to $\mathbb{C}/(\mathbb{Z}+\tau \mathbb{Z})$ \cite{FarkasKra}. The following proposition can be taken straight from \cite{BCL19} after replacing $y_{1}$, $y_{2}$ and $z$ with $x_{0}-x_{2}$, $x_{1}-x_{2}$ and $z-x_{2}$, respectively, and noting that $\varphi_{A}(z-x_{2})$ in \cite{BCL19} equals $\varphi_{A}(z)$ here.
\begin{proposition}[Some $\theta$-function identities]\label{prop: Abel map and theta function}
For all $\hat\nu \in X$, we have
\begin{subequations}\label{thetaD123 nu}
\begin{align}
\theta (\hat\nu +\tfrac{\tau}{2})^2
& = e^{-2i \pi \hat\nu}\frac{D_1^2}{\hat{a}-x_2}\theta (\hat\nu)^2, & & D_1 = (x_0-x_2)^\frac{1}{4}(x_1-x_2)^{\frac{1}{4}}e^{- \frac{\pi i \tau}{4}}, \label{thetaD123a nu}
	\\
\theta(\hat\nu +\tfrac{1+\tau}{2})^2
& = e^{-2i \pi \hat\nu}\frac{D_2^2(\hat{a}-x_0)}{\hat{a}-x_2}\theta (\hat\nu)^2, & & D_2 = i \frac{(x_1-x_2)^{\frac{1}{4}}e^{-\frac{\pi i \tau}{4}}}{(x_0-x_1)^{\frac{1}{4}}} , \label{thetaD123b nu}
	\\ 
\theta(\hat\nu +\tfrac{1}{2})^2
& = \frac{D_3^2(\hat{a}-x_1)}{\hat{a}-x_2}\theta (\hat\nu)^2, & & D_3 = \frac{(x_0-x_2)^{\frac{1}{4}}}{(x_0-x_1)^{\frac{1}{4}}}. \label{thetaD123c nu}
\end{align}
\end{subequations}
where $\hat{a} = \varphi_{A}^{-1}(\hat\nu)$. These expressions make it possible to express $\theta^{(j)}( \frac{1}{2} )$, $\theta^{(j)}( \frac{\tau}{2} )$, $\theta^{(j)}( \frac{1+\tau}{2} )$ for $j \geq 0$ in terms of $\theta^{(j)}(0)$, $j \geq 0$. For example, 
\begin{align*}
& \theta\Big(\frac{\tau}{2}\Big)=e^{-\frac{\pi i \tau}{4}}\frac{(x_1-x_2)^{\frac{1}{4}}}{(x_0-x_2)^{\frac{1}{4}}}\theta(0), \qquad \theta\Big(\frac{1}{2}\Big) = \frac{(x_0-x_1)^{\frac{1}{4}}}{(x_0-x_2)^{\frac{1}{4}}}\theta(0), \qquad \theta'\Big(\frac{\tau}{2}\Big) = -i \pi e^{-\frac{\pi i \tau}{4}}\frac{(x_1-x_2)^{\frac{1}{4}}}{(x_0-x_2)^{\frac{1}{4}}}\theta(0),
	\\
& \theta'\Big(\frac{1+\tau}{2}\Big) = i e^{-\frac{\pi i \tau}{4}}\frac{(x_0-x_1)^{\frac{1}{4}}(x_1-x_2)^{\frac{1}{4}}}{2c_{0}}\theta(0), \qquad
\theta''\Big(\frac{1+\tau}{2}\Big) = \pi e^{-\frac{\pi i \tau}{4}} \frac{(x_0-x_1)^{\frac{1}{4}}(x_1-x_2)^{\frac{1}{4}}}{c_{0}}\theta(0),
	\\
& \theta''\Big(\frac{\tau}{2}\Big) = e^{-\frac{\pi i \tau}{4}}\frac{(x_1-x_2)^{\frac{1}{4}}}{(x_0-x_2)^{\frac{1}{4}}}\bigg( \theta''(0)-\bigg[\pi^{2}+\frac{x_0-x_1}{4 c_{0}^{2}} \bigg]\theta(0) \bigg), 	
	\\
& \theta''\Big(\frac{1}{2}\Big) = \frac{(x_0-x_1)^{\frac{1}{4}}}{4 c_{0}^{2} (x_0-x_2)^{\frac{1}{4}}}\Big( (x_1-x_2) \theta(0) + 4 c_{0}^{2} \theta''(0) \Big).
\end{align*}
\end{proposition}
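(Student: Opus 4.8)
The plan is to recognize \eqref{thetaD123 nu} as the genus-one ``$\wp-e_j$'' identities of Jacobi, transported to the elliptic curve $X=\{w^2=\mathcal{R}(z)=(z-x_0)(z-x_1)(z-x_2)\}$, and to prove them by matching divisors on $X$. I identify $X$ with $\mathbb{C}/(\mathbb{Z}+\tau\mathbb{Z})$ via $\varphi_{A}$; since $\varphi(z)=\int_{x_0}^z\omega$, one has $\varphi_{A}(x_0)=0$, and the three remaining branch points $P_{x_1},P_{x_2},P_\infty$ of $X$ (over $x_1$, $x_2$, $\infty$) are sent to the three nonzero half-periods. The precise assignment $\varphi_A(P_{x_1})=\tfrac{\tau}{2}$, $\varphi_A(P_{x_2})=\tfrac{1+\tau}{2}$, $\varphi_A(P_\infty)=\tfrac12$ is obtained either by computing $\int_{x_0}^{x_1}\omega=\tfrac12\oint_{B_1}\omega$ and $\int_{x_0}^{x_2}\omega=\tfrac12(\oint_{B_1}\omega+\oint_{A_1}\omega)$ from the homology basis of Figure \ref{fig:homology} with $g=1$, or it is simply forced a posteriori by the divisor bookkeeping below. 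I also use that in the convention \eqref{def of theta genus g} the function $\theta(\cdot\,;\tau)$ is Jacobi's $\theta_3$, whose only zero in a fundamental cell is at $\tfrac{1+\tau}{2}$; hence $\theta(\hat\nu+c)$ vanishes (simply) exactly at $\hat\nu=\tfrac{1+\tau}{2}-c$.

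I would establish \eqref{thetaD123c nu} first. The quotient $E(\hat\nu):=\theta(\hat\nu+\tfrac12)^2/\theta(\hat\nu)^2$ is elliptic: it is invariant under $\hat\nu\mapsto\hat\nu+1$, and under $\hat\nu\mapsto\hat\nu+\tau$ the quasi-periodicity factors of \eqref{prop of theta genus g} cancel (the residual factor is a power of $e^{2\pi i}=1$). Its divisor is $2[\varphi_A^{-1}(\tfrac{\tau}{2})]-2[\varphi_A^{-1}(\tfrac{1+\tau}{2})]=2[P_{x_1}]-2[P_{x_2}]$. On the other hand $\hat{a}-x_j$, viewed as a function on $X$, has a double zero at $P_{x_j}$ and a double pole at $P_\infty$, so $(\hat{a}-x_1)/(\hat{a}-x_2)$ has the same divisor $2[P_{x_1}]-2[P_{x_2}]$. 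Two elliptic functions with equal divisor agree up to a constant, hence $E(\hat\nu)=D_3^2(\hat{a}-x_1)/(\hat{a}-x_2)$; evaluating at $\hat\nu=0$, $\hat a=x_0$ and using $\theta(\tfrac12)^2/\theta(0)^2=\theta_4(0)^2/\theta_3(0)^2=k'$ together with $k'^2=1-k^2=(x_0-x_1)/(x_0-x_2)$ — which holds exactly because $k$ was defined by \eqref{x0 implicit eq} — yields $D_3=(x_0-x_2)^{1/4}/(x_0-x_1)^{1/4}$. Identities \eqref{thetaD123a nu} and \eqref{thetaD123b nu} are proved the same way, except that $\theta(\hat\nu+\tfrac{\tau}{2})^2/\theta(\hat\nu)^2$ and $\theta(\hat\nu+\tfrac{1+\tau}{2})^2/\theta(\hat\nu)^2$ are not elliptic; the exponential prefactor present in \eqref{thetaD123a nu}--\eqref{thetaD123b nu} is precisely the factor with empty divisor that cancels the residual exponential picked up under $\hat\nu\mapsto\hat\nu+\tau$, restoring ellipticity. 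The resulting elliptic functions have divisors $2[P_\infty]-2[P_{x_2}]$ and $2[P_{x_0}]-2[P_{x_2}]$, matching those of $1/(\hat{a}-x_2)$ and $(\hat{a}-x_0)/(\hat{a}-x_2)$; the constants $D_1,D_2$ are then pinned down by the same limiting procedure at $\hat a=x_0$, now involving the classical value $\theta(\tfrac{\tau}{2})^2/\theta(0)^2$ and hence the $e^{-\pi i\tau/4}$ and $k$ factors appearing in their statements.

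For the ``moreover'' part I would differentiate the three squared identities and evaluate at $\hat\nu=0$. Near $\hat a=x_0$ one has, from $\omega=c_0\,d\hat a/\sqrt{\mathcal{R}}$, the expansion $\hat\nu=\varphi(\hat a)\sim 2c_0(\hat a-x_0)^{1/2}/\sqrt{(x_0-x_1)(x_0-x_2)}$, so $\hat a-x_0\sim\frac{(x_0-x_1)(x_0-x_2)}{4c_0^2}\hat\nu^2$; substituting this into the right-hand sides of \eqref{thetaD123 nu} and comparing Taylor coefficients in $\hat\nu$ gives $\theta(\tfrac12),\theta(\tfrac{\tau}{2})$ from the constant term, $\theta'(\tfrac{1+\tau}{2})$ from the first nonvanishing ($\hat\nu^2$) term of \eqref{thetaD123b nu} (its constant term is zero since $\theta(\tfrac{1+\tau}{2})=0$), and the second derivatives $\theta''(\tfrac12),\theta''(\tfrac{\tau}{2}),\theta''(\tfrac{1+\tau}{2})$ from the next order, where one additionally needs the subleading coefficients of $\varphi$ and of $\sqrt{\mathcal{R}}$ at $x_0$. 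This is a finite computation, and since it merely reproduces the corresponding formulas of \cite{BCL19} under the substitution $(y_1,y_2,z)\mapsto(x_0-x_2,x_1-x_2,z-x_2)$, I would cite that reference and record only the translation of notation together with a representative sample of the resulting values, as in the statement.

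\textbf{Main obstacle.} The difficulty here is not conceptual but one of bookkeeping: fixing every branch cut and sign consistently throughout (the $e^{-\pi i\tau/4}$ prefactors, the factors of $i$, and the fourth roots in $D_1,D_2,D_3$ are all sign-sensitive and depend on the chosen branch of $\sqrt{\mathcal{R}}$ and of $\varphi$), and, for the derivative formulas, tracking exactly the expansion coefficients of the Abel map and of $\sqrt{\mathcal{R}}$ near $x_0$. This is presumably why it is cleanest to transfer the computation wholesale from \cite{BCL19}.
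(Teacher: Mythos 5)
Your argument is correct, but it is a genuinely different route from the paper: the paper does not prove Proposition \ref{prop: Abel map and theta function} internally at all, it imports it verbatim from \cite{BCL19} after the substitution $(y_1,y_2,z)\mapsto(x_0-x_2,x_1-x_2,z-x_2)$ and the observation that the Abel maps agree, whereas you supply a self-contained divisor-matching proof. Your scheme is sound: the combinations $e^{2\pi i\hat\nu}\theta(\hat\nu+\tfrac{\tau}{2})^2/\theta(\hat\nu)^2$, $e^{2\pi i\hat\nu}\theta(\hat\nu+\tfrac{1+\tau}{2})^2/\theta(\hat\nu)^2$, $\theta(\hat\nu+\tfrac12)^2/\theta(\hat\nu)^2$ are elliptic and even by \eqref{prop of theta genus g}, descend through $\varphi_A$ to rational functions of $\hat a$, and their divisors match $1/(\hat a-x_2)$, $(\hat a-x_0)/(\hat a-x_2)$, $(\hat a-x_1)/(\hat a-x_2)$ given the half-period assignment $\varphi_A(x_1)=\tfrac\tau2$, $\varphi_A(x_2)=\tfrac{1+\tau}{2}$, $\varphi_A(\infty)=\tfrac12$ (consistent with the paper's statements around \eqref{def of u} and in the proof of Proposition \ref{theta'/theta derivative identity}); and the derivative formulas do follow by Taylor-expanding at $\hat\nu=0$ with $\hat a-x_0\sim\tfrac{(x_0-x_1)(x_0-x_2)}{4c_0^2}\hat\nu^2$. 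Two small caveats: the step $\theta(\tfrac12)^2/\theta(0)^2=k'$ with $k'^2=(x_0-x_1)/(x_0-x_2)$ is not a consequence of \eqref{x0 implicit eq} as you suggest — the algebraic relation $k'^2=1-k^2$ is immediate from the definition of $k$, but identifying the paper's $k$ with the Jacobi modulus of $\tau$ requires the classical fact $\tau=i\mathbf{K}(k')/\mathbf{K}(k)$ for this curve and homology basis (alternatively, pin the constants down at $\hat a=\infty$, $\hat\nu=\tfrac12$, which avoids theta-constant evaluations); and the signs of $D_1,D_2,D_3$ are immaterial in \eqref{thetaD123 nu} (only their squares enter) but resurface when you extract $\theta'(\tfrac{1+\tau}{2})$ etc.\ from square roots, so those signs need a separate fix (e.g.\ positivity of the theta constants for $\tau\in i\mathbb{R}^+$). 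What your approach buys is a proof independent of \cite{BCL19}; what the paper's route buys is brevity and freedom from exactly the branch/sign bookkeeping you flag.
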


As previously mentioned, we compute $E_{x_j}(x_j)$, $E_{x_j}'(x_j)$, $j=1,2$, via \eqref{Px1}, \eqref{Px2} and now use Proposition \ref{prop: Abel map and theta function} to obtain
\begin{align*}
\frac{-x_1}{2\pi i r} \Big[ E_{x_1}(x_1)^{-1} E_{x_1}'(x_1)\Big]_{21}
&=\frac{-8\pi c_0x_1q(x_1)}{3\Omega(x_0-x_1)(x_1-x_2)}\frac{d}{dr}\left[\log\theta(\nu)\right], \\
\frac{x_2}{2\pi i r} \Big[ E_{x_2}(x_2)^{-1} E_{x_2}'(x_2)\Big]_{21}
 & = \frac{8\pi c_0x_2q(x_2)}{3\Omega(x_0-x_2)(x_1-x_2)}\frac{d}{dr}\left[\log\theta(\nu)\right],
\end{align*}
where $\nu$ is defined in \eqref{nu_r_relation}. It follows that $I_3(r) = c_{3} \, \partial_{r} \log \theta(\nu)$,
where the constant $c_{3}$ is given by
\begin{align*}
c_{3} = \frac{-8\pi c_0}{3\Omega(x_1-x_2)}\left(\frac{x_1q(x_1)}{x_0-x_1}-\frac{x_2q(x_2)}{x_0-x_2}\right)=- \frac{4\pi c_{0}}{3\Omega} (x_0+x_1+x_2).
\end{align*}
Using \eqref{OmegaEllipticInt}, \eqref{def of u}, and the property \cite[Eq. 8.122]{GRtable} of complete elliptic integrals, we find 
\begin{align*}
\frac{\Omega}{c_0}&=\frac{8}{3}(x_0+x_1+x_2)\left[\textbf{K}(k')\left(\textbf{K}(k)-\textbf{E}(k)\right)-\textbf{E}(k')\textbf{K}(k)\right]=-\frac{4\pi}{3}(x_0+x_1+x_2),
\end{align*}
which shows that $c_{3}=1$. This proves that
\begin{align}\label{third term simple sum and c3}
I_3(r) = \partial_{r} \log \theta(\nu).
\end{align}

\subsection{Analysis of $I_2(r)$}
By \eqref{sandwich in terms of Jcal}, we have
\begin{align}\label{secontermexpression}
 I_2(r) = &\; \frac{1}{2\pi i r^{\frac{5}{2}}}\bigg\{\sum_{j=1}^2(-1)^{j+1}x_j(\mathcal{J}_{x_j})_{x_j}^{(1)}  
 + \sum_{j=1}^2\frac{2(-1)^jx_j(\mathcal{J}_{x_j})_{x_0}^{(-2)}}{(x_0-x_j)^3}
  + \sum_{j=1}^2\sum_{\substack{j'=0 \\ j'\neq j}}^2\frac{(-1)^{j+1}x_j(\mathcal{J}_{x_{j}})_{x_{j'}}^{(-1)}}{(x_{j}-x_{j'})^{2}}\bigg\}.
\end{align}
To simplify the right-hand side, we need Proposition \ref{prop: Abel map and theta function} as well as the following identities.

\begin{proposition}\label{theta'/theta derivative identity}
We have
\begin{align}
& \frac{\theta''(0)}{\theta(0)}=-\frac{(x_1-x_2)(\gamma x_0 + \delta)}{4c_0^2(\gamma x_2 + \delta)}=\frac{(x_0-x_1)(3x_0+x_1-x_2)}{4c_0^2(x_0+x_1+x_2)}, \label{theta''/theta} \\
& \frac{d}{dz}\left[\frac{\theta'(\varphi(z))}{\theta(\varphi(z))}\right]= \frac{\gamma z + \delta}{z-x_2}\varphi'(z), \qquad z \in X, \label{rationalFunc}
\end{align}
where 
$$\gamma = \frac{(x_0-x_2)(3x_0-x_1+x_2)}{4c_0^2(x_0+x_1+x_2)}, \qquad
\delta = -\frac{(x_0-x_2) (x_0 (x_1+2 x_2)+x_1 (x_1-x_2))}{4 c_0^2 (x_0+x_1+x_2)}.$$
\end{proposition}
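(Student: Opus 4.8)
\emph{Proof idea.} Both identities are statements about the composition of the logarithmic derivative $\theta'/\theta$ with the Abel map, and the plan is to establish \eqref{rationalFunc} up to the two constants $\gamma,\delta$ and then to pin those constants down, which simultaneously yields \eqref{theta''/theta}. Set $G(z):=\varphi'(z)^{-1}\frac{d}{dz}\frac{\theta'(\varphi(z))}{\theta(\varphi(z))}=(\log\theta)''(\varphi(z))$. By the quasi-periodicity relations \eqref{prop of theta genus g}, $(\log\theta)''$ is elliptic with period lattice $\mathbb{Z}+\tau\mathbb{Z}$, and it is even since $\theta$ is even; since $\varphi_A:X\to\mathbb{C}/(\mathbb{Z}+\tau\mathbb{Z})$ is a biholomorphism, $G$ is invariant under the hyperelliptic involution and hence a rational function of $z$. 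Its only pole sits where $\theta(\varphi(z))$ vanishes; the genus-one theta function has a single simple zero per period cell, at the half-period $\tfrac{1+\tau}{2}$, and the identities of Proposition \ref{prop: Abel map and theta function} (which give $\theta(\varphi(x_2))=0$, while $\varphi(x_0)=0$ and $\theta(\varphi(x_1))\ne0$) force $\varphi(x_2)\equiv\tfrac{1+\tau}{2}$. Hence $G$ has a single simple pole, at the branch point $x_2$, and is finite at $\infty$, so $G(z)=\gamma+\frac{a}{z-x_2}$ for constants $\gamma,a$; writing $\delta=\gamma x_2+a$ gives \eqref{rationalFunc}, and it only remains to evaluate $a$ and $\gamma$.

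The constant $a$ comes from a local analysis at $x_2$: using the simple zero of $\theta$ at $\tfrac{1+\tau}{2}$ and $\varphi(z)-\tfrac{1+\tau}{2}\sim \tfrac{2c_0}{\sqrt{(x_0-x_2)(x_1-x_2)}}\,(z-x_2)^{1/2}$ one gets $\frac{\theta'(\varphi(z))}{\theta(\varphi(z))}\sim\frac{\sqrt{(x_0-x_2)(x_1-x_2)}}{2c_0}(z-x_2)^{-1/2}$, and matching the $(z-x_2)^{-3/2}$ terms in $G(z)\varphi'(z)$ with $\varphi'(z)=c_0/\sqrt{\mathcal{R}(z)}$ yields $a=-\frac{(x_0-x_2)(x_1-x_2)}{4c_0^2}$. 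For $\gamma=G(\infty)$ I would evaluate $(\log\theta)''$ at $\varphi(\infty)=\tfrac12$ (the three nonzero half-periods being the images of $x_1,x_2,\infty$), using the explicit values of $\theta,\theta',\theta''$ at $\tfrac12$ and $\tfrac\tau2$ supplied by Proposition \ref{prop: Abel map and theta function}; this gives $\gamma=\frac{x_1-x_2}{4c_0^2}+\frac{\theta''(0)}{\theta(0)}$. At this point both \eqref{rationalFunc} and \eqref{theta''/theta} are reduced to a single scalar identity, the value of $\theta''(0)/\theta(0)$: indeed $G(x_0)=(\log\theta)''(0)=\theta''(0)/\theta(0)$, and substituting $\gamma x_2+\delta=a$ into $G(x_0)=\frac{\gamma x_0+\delta}{x_0-x_2}$ reproduces the first equality in \eqref{theta''/theta}.

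It remains to show $\frac{\theta''(0)}{\theta(0)}=\frac{(x_0-x_1)(3x_0+x_1-x_2)}{4c_0^2(x_0+x_1+x_2)}$ — the additive normalization of $G$ that Proposition \ref{prop: Abel map and theta function} does not detect. For this I would exploit one period relation: since $\theta(w+1)=\theta(w)$ and $\oint_A\omega=1$, the function $z\mapsto\theta'(\varphi(z))/\theta(\varphi(z))$ has trivial monodromy around the $A$-cycle, so $0=\oint_A G(z)\varphi'(z)\,dz=\gamma+a\,c_0\oint_A\frac{dz}{(z-x_2)\sqrt{\mathcal{R}(z)}}$. Evaluating this complete elliptic integral of the third kind in terms of $\textbf{K}(k)$ and $\textbf{E}(k)$ (by the same GR-table reductions used for $\Omega$ in Remark \ref{OmegaRemark}, with $k=\sqrt{(x_1-x_2)/(x_0-x_2)}$) and then simplifying with the identity $\textbf{E}(k)/\textbf{K}(k)=-2(x_0-x_1)/(x_0+x_1+x_2)$ of \eqref{x0 implicit eq} produces the claimed value, whence $\gamma$ and $\delta=\gamma x_2+a$ take the forms in the statement and the second equality in \eqref{theta''/theta} follows by algebraic rearrangement. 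An equivalent route uses the heat equation $\partial_\tau\theta(0;\tau)=\frac{1}{4\pi i}\theta''(0;\tau)$ together with $\theta(0;\tau)^2\propto\textbf{K}(k)$, again reducing everything to an elliptic-integral computation.

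The principal obstacle is this last scalar identity: the reduction of \eqref{rationalFunc} to the shape $\frac{\gamma z+\delta}{z-x_2}$ is routine, but determining $\theta''(0)/\theta(0)$ requires an explicit elliptic-integral evaluation followed by a nontrivial cancellation using the implicit equation for $x_0$ — the same kind of elliptic-function bookkeeping that makes Section \ref{section:integration1} delicate. One must also take care over which half-period corresponds to which branch point and over the branches of the roots entering the residue computation at $x_2$, so that all signs come out correctly.
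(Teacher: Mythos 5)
Your proof is correct, and it diverges from the paper's argument at the one genuinely delicate point: how the two constants are pinned down. The common core is the same — you show that $(\log\theta)''\circ\varphi$ descends to a rational function $\frac{\gamma z+\delta}{z-x_2}$ with its only (simple) pole at $x_2$, read off $\gamma=\theta''(\tfrac12)/\theta(\tfrac12)=\frac{x_1-x_2}{4c_0^2}+\frac{\theta''(0)}{\theta(0)}$ from $z\to\infty$, and get the first equality of \eqref{theta''/theta} from $z\to x_0$, exactly as in the paper. The paper obtains its second relation by multiplying \eqref{identity with unknowns} by $\varphi'$ and integrating from $x_0$ to $x$, which produces the explicit incomplete-elliptic antiderivative \eqref{thetaprimeovertheta}, and then letting $x\to\infty$ and invoking \eqref{x0 implicit eq}; you instead compute the residue at $x_2$ by a local expansion (your value $\gamma x_2+\delta=-\frac{(x_0-x_2)(x_1-x_2)}{4c_0^2}$ is correct, though note the slip ``$\delta=\gamma x_2+a$'' in your write-up — the relation you actually use later, $a=\gamma x_2+\delta$, is the right one) and then fix the normalization from $0=\oint_A d\big[\theta'(\varphi)/\theta(\varphi)\big]$. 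The step you left as an assertion does close: since the differential of $\sqrt{\mathcal{R}(z)}/(z-x_2)$ has vanishing $A$-period, one has
\begin{equation*}
\oint_A\frac{dz}{(z-x_2)\sqrt{\mathcal{R}(z)}}=\frac{1}{(x_0-x_2)(x_1-x_2)}\oint_A\frac{(z-x_2)\,dz}{\sqrt{\mathcal{R}(z)}},
\qquad\text{hence}\qquad
\gamma=\frac{1}{4c_0}\oint_A\frac{(z-x_2)\,dz}{\sqrt{\mathcal{R}(z)}}=\frac{x_0-x_2}{4c_0^2}\Big(1-\frac{\textbf{E}(k)}{\textbf{K}(k)}\Big),
\end{equation*}
which equals the stated $\gamma$ by \eqref{x0 implicit eq}; the claimed $\delta$ and $\theta''(0)/\theta(0)$ then follow as you describe. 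Two cautions: do not collapse the third-kind period naively onto the gap $(x_2,x_1)$, since the integrand is non-integrable at the endpoint $x_2$ (reduce to the second-kind period first, as above, or keep the cycle away from $x_2$), and track the sign of $\sqrt{\mathcal{R}}$ on the gap when rewriting $\oint_A$ as twice an integral over $(x_2,x_1)$. On balance your residue-plus-$A$-period normalization is somewhat cleaner than the paper's explicit antiderivative, at the price of this bookkeeping; both routes ultimately consume the same input, namely \eqref{x0 implicit eq} together with one complete elliptic-integral evaluation.
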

\begin{proof}
We will first show that there exist $\gamma, \delta \in \C$ such that
%New notation: \gamma = \tilde{c}, \delta = -\tilde{c}\alpha
\begin{align}\label{identity with unknowns}
\frac{\theta''(\varphi(z))\theta(\varphi(z))-\theta'(\varphi(z))^2}{\theta(\varphi(z))^2}=\frac{\gamma z + \delta}{z-x_2}.
\end{align}

Let $h(z)$ denote the left-hand side of (\ref{identity with unknowns}).
Using \eqref{prop of theta genus g} and the fact that $\frac{1+\tau}{2}$ is a simple zero of $\theta(u)$, we verify that $\big(\frac{\theta'(u)}{\theta(u)}\big)'=\frac{\theta''(u)\theta(u)-\theta'(u)^2}{\theta(u)^2}$ is well-defined on $\mathbb{C}/(\mathbb{Z}+\tau \mathbb{Z})$ with a double pole at $\frac{1+\tau}{2}$ and no other poles. Since the Abel map $\varphi_A:X \to \mathbb{C}/(\mathbb{Z}+\tau \mathbb{Z})$ is an isomorphism, it follows that $h$ is a meromorphic function on $X$ with a double pole at $\varphi_A^{-1}(\frac{1+\tau}{2}) = x_2$ and no other poles. In fact, since $\theta(u)$ is an even function of $u$ and $\varphi(z) = -\varphi(\iota(z))$, where $\iota:X \to X$ denotes the sheet-changing involution, $h$ descends to a meromorphic function on the Riemann sphere with a simple pole at $x_2$ and no other poles. This shows that (\ref{identity with unknowns}), and hence also (\ref{rationalFunc}), holds for some choice of $\gamma, \delta \in \C$.

Taking $z\to\infty$ in (\ref{identity with unknowns}) and using Proposition \ref{prop: Abel map and theta function}, we see that
\begin{align}\label{first expr for c tilde}
    \gamma=\frac{\theta''(\frac{1}{2})}{\theta(\frac{1}{2})}=\frac{x_1-x_2}{4c_0^2}+\frac{\theta''(0)}{\theta(0)}.
\end{align}
Multiplying \eqref{identity with unknowns} by $\varphi'(z)$, integrating from $x_0$ to $x>x_{0}$, and using \eqref{def of u} and \cite[Eqs. 3.131.7, 3.133.17]{GRtable}, we obtain 
\begin{align}\label{thetaprimeovertheta}
\frac{\theta'(\varphi(x))}{\theta(\varphi(x))}&= \frac{\gamma x_1+ \delta}{x_1-x_2}\varphi(x)+\frac{2(\gamma x_2 +\delta)(x-x_0)}{x_0-x_2}\varphi'(x)-\frac{\gamma x_2  + \delta}{2(x_1-x_2)}\frac{\textbf{E}\left(\sin^{-1}\sqrt{\frac{x-x_0}{x-x_1}},k\right)}{\textbf{K}\left(k\right)},    
\end{align}
where $k=\sqrt{\frac{x_1-x_2}{x_0-x_2}}$, and $\textbf{E}(\varphi,k):=\int_0^\varphi \sqrt{1-k^2\sin^2t}~dt$ is the elliptic integral of the second kind.  Sending $x\to\infty$ in (\ref{thetaprimeovertheta}) and using \eqref{x0 implicit eq}, we obtain $\delta$ in terms of $\gamma$. Then, taking $z\to x_0$ in \eqref{identity with unknowns}, we find \eqref{theta''/theta}. Substituting \eqref{theta''/theta} into (\ref{first expr for c tilde}), we find the asserted expression for $\gamma$ and as an immediate consequence we find the asserted expression for $\delta$.
\end{proof}

The quantity $a:=\varphi_{A}^{-1}(\nu) \in X$ appears in the large $r$ asymptotics of \eqref{secontermexpression}. From \eqref{def of u}, we note that $\varphi(x)$ is monotone for $x \in (x_0,+\infty)$, and satisfies $\varphi_{A}(x_0) = 0$ and $\varphi_{A}(\infty) = \frac{1}{2}$. Hence, as $r$ increases, $a$ oscillates between $x_{0}$ and $+\infty$ (and changes sheet). More precisely, $a$ belongs to the upper sheet and satisfies $\sqrt{\mathcal{R}(a)} > 0$ if $\nu \hspace{-.1cm}\mod 1 \in (0,\frac{1}{2})$, while $a$ belongs to the lower sheet and satisfies $\sqrt{\mathcal{R}(a)} < 0$ if $\nu \mod 1 \in (\frac{1}{2},1)$.

\begin{proposition}\label{prop: second term exact}
We have
\begin{align}
I_2(r) = \frac{d}{dr}\bigg\{& \tilde{c}_0\log(r)
+\int^r\bigg[ \frac{\tilde{c}_{-2}}{\tilde{r}(\tilde{a}-x_2)^2}+\frac{\tilde{c}_{-1}}{\tilde{r}(\tilde{a}-x_2)}+\frac{\tilde{c}_1(\tilde{a})}{\tilde{r}}\frac{\theta'(\tilde{\nu})}{\theta(\tilde{\nu})} \nonumber
	\\ \label{I2expression}
 &+\frac{\tilde{c}_2(\tilde{a})}{\tilde{r}^\frac{3}{2}}\frac{d}{d\tilde{r}}\bigg(\frac{\theta'(\tilde{\nu})}{\theta(\tilde{\nu})}\bigg) +\frac{\tilde{c}_3}{\tilde{r}^\frac{3}{2}}\frac{d}{d\tilde{r}}\bigg(\frac{\theta'(\tilde{\nu})^3}{\theta(\tilde{\nu})^3}\bigg)\bigg]d\tilde{r}\bigg\}
\end{align}
where $\tilde{\nu}=-\frac{\Omega}{2\pi}\tilde{r}^\frac{3}{2}$, $\tilde{a}=\varphi^{-1}_{A}(\tilde{\nu})$, and
\begin{align*}
    \tilde{c}_{-2}= &\; \frac{17(x_0-x_2)(x_1-x_2)\left(x_1\tilde{q}(x_1)-x_2\tilde{q}(x_2)\right)}{48\tilde{q}(x_0)(x_0-x_1)}, 
    	\\
    \tilde{c}_{-1}= &\; \frac{1}{48}\bigg\{\frac{3x_2q(x_2)}{q(x_1)}-\frac{3 x_1 \tilde{q}(x_1)}{\tilde{q}(x_2)}+\frac{x_1 \tilde{q}(x_1)(-29 x_0^2-20 x_0 x_1+21 x_0 x_2-11 x_1^2+9 x_1 x_2+30 x_2^2)}{\tilde{q}(x_0) (x_0-x_1) (x_0+x_1+x_2)}
    	\\
    & +\frac{x_2 \tilde{q}(x_2) \left(29 x_0^2+13 x_0 x_1-14 x_0 x_2+4 x_1^2-9 x_1 x_2-23 x_2^2\right)}{\tilde{q}(x_0) (x_0-x_1) (x_0+x_1+x_2)}+\frac{3 (x_0-x_2) (x_1 \tilde{q}(x_1)-x_2 \tilde{q}(x_2))}{\tilde{q}(x_0)^2}\bigg\}, 	    
\end{align*}
\begin{align*}
\tilde{c}_{0}=&\; \frac{1}{48}\bigg\{\frac{2 x_2 \tilde{q}(x_2) \left(-11 x_0^2+2 x_0 (x_1+x_2)+3 x_1^2+x_1 x_2+3 x_2^2\right)+7 x_1 \tilde{q}(x_1) (x_0-x_2) (x_0+x_1+x_2)}{\tilde{q}(x_0) (x_0-x_1) (x_0-x_2) (x_0+x_1+x_2)}  
    	\\
    &+ \frac{3x_2\tilde{q}(x_2)}{\tilde{q}(x_0)^2}+\frac{3x_1\tilde{q}(x_1)}{\tilde{q}(x_2) (x_1-x_2)}-\frac{3x_2q(x_2)}{q(x_1)(x_1-x_2)}+3 x_0 \left(\frac{6}{x_0+x_1+x_2}+\frac{1}{x_0-x_1}+\frac{1}{x_0-x_2}\right) 
    	\\
    &-\frac{3x_1q'(x_1)}{q(x_1)}-\frac{3 x_2 q'(x_2)}{q(x_2)}\bigg\}, \\
    \tilde{c}_{1}(\tilde{a})=&\; \frac{c_0\left(x_1\tilde{q}(x_1)-x_2\tilde{q}(x_2)\right)}{\tilde{q}(x_0)(x_0-x_1)}\frac{\sqrt{\mathcal{R}(\tilde{a})}}{(\tilde{a}-x_2)^2}, 
    	\\
    \tilde{c}_{2}(\tilde{a})= &\; \frac{-\pi c_0^2}{9\Omega\tilde{q}(x_0)(x_0-x_1)}\left(\frac{5(x_1\tilde{q}(x_1)-x_2\tilde{q}(x_2))}{\tilde{a}-x_2}+\frac{5x_2\tilde{q}(x_2)-9x_0\tilde{q}(x_0)}{x_0-x_2}\right), 
    	\\
    \tilde{c}_{3}=&\; \frac{-8\pi c_0^4(x_0+x_1+x_2)}{9\Omega\tilde{q}(x_0)(x_0-x_1)(x_0-x_2)}.
\end{align*}
\end{proposition}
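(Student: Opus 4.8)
The plan is to prove the proposition by making formula~\eqref{secontermexpression} fully explicit and then recognising the outcome as the total $r$-derivative in~\eqref{I2expression}. By definition the eight coefficients $(\mathcal{J}_{x_j})_{x_{j'}}^{(k)}$ in~\eqref{secontermexpression} are the Laurent/Taylor coefficients at $x_{j'}$ of $\mathcal{J}_{x_j}(z)=[E_{x_j}(x_j)^{-1}J_R^{(1)}(z)E_{x_j}(x_j)]_{21}$, so the first task is to compute them. I would insert the explicit expression~\eqref{explicit expression for the jumps Jrp1p} for $J_R^{(1)}$, the global parametrix $P^{(\infty)}$ from~\eqref{def of P inf hat two cuts FINAL}, the model-problem constants $\Phi_{\mathrm{Ai},1}$ and $\Phi_{\mathrm{Be},1}$ (recalled in the appendices), the local expansions~\eqref{x0coordExpansion},~\eqref{cx1},~\eqref{cx2} of the conformal maps $f_0,\tilde f$, and the expansions of $\beta$ and $\varphi$ near $x_0,x_1,x_2$ (together with the displayed $E_{x_1}(x_1),E_{x_1}'(x_1)$ and their $x_2$-analogues, whose $r^{\pm 3/4}$-scalings account for $[E_{x_j}(x_j)^{-1}R^{(1)\prime}(x_j)E_{x_j}(x_j)]_{21}=\bigO(r^{3/2})$ and hence $I_2(r)=\bigO(r^{-1})$). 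Expanding around each $x_{j'}$ then yields the coefficients as explicit combinations of $x_0,x_1,x_2$, $c_0$, $r$ and the values of $\theta$ and its first few derivatives at $0,\tfrac{1}{2},\tfrac{\tau}{2},\tfrac{1+\tau}{2}$ and at $\pm\varphi(x_{j'})$. The singular coefficients $(\mathcal{J}_{x_j})_{x_0}^{(-2)}$ arise because $\beta(z)^{\sigma_3}$ in $P^{(\infty)}$ produces a $(z-x_0)^{-1/2}$ in the $(2,1)$-entry which combines with the $f_0(z)^{-3/2}$ in~\eqref{explicit expression for the jumps Jrp1p}, whereas $(\mathcal{J}_{x_j})_{x_1}^{(-2)}=(\mathcal{J}_{x_j})_{x_2}^{(-2)}=0$ since $\tilde f^{-1/2}$ is regular at $x_0$.

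The second step is to simplify these $\theta$-expressions using Propositions~\ref{prop: Abel map and theta function} and~\ref{theta'/theta derivative identity} and Remark~\ref{OmegaRemark}. All values $\theta^{(m)}(\tfrac{1}{2}),\theta^{(m)}(\tfrac{\tau}{2}),\theta^{(m)}(\tfrac{1+\tau}{2})$ are reduced to $\theta^{(m)}(0)$ via~\eqref{thetaD123 nu}, and $\theta''(0)/\theta(0)$ — as well as any surviving $\theta''''(0)/\theta(0)$, obtained by differentiating~\eqref{identity with unknowns} — is expressed as a rational function of $x_0,x_1,x_2,c_0$ through~\eqref{theta''/theta}. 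The quantities $\mathcal{G}(\pm\varphi(x_{j'}))=\theta(\nu\pm\varphi(x_{j'}))/\theta(\pm\varphi(x_{j'}))$ are rewritten in terms of $\theta(\nu)$, the point $a=\varphi_A^{-1}(\nu)$, and rational functions of the branch points, using~\eqref{thetaD123 nu} and $\varphi_A(x_0)=0$, $\varphi_A(x_1)=\tfrac{\tau}{2}$, $\varphi_A(x_2)=\tfrac{1+\tau}{2}$; this is precisely how the factors $a-x_2$, $a-x_0$ and $\sqrt{\mathcal{R}(a)}=c_0/\varphi'(a)$ enter $\tilde c_{-2},\tilde c_{-1},\tilde c_1(\tilde a),\tilde c_2(\tilde a)$. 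The cubic $\theta'(\nu)^3/\theta(\nu)^3$ originates from the fact that $c_{\mathcal{G}}=2c_0(\log\mathcal{G})'(\tfrac{1}{2})$ occurs cubically in the $(2,1)$-entries, while $(\log\mathcal{G})'(\tfrac{1}{2})$ equals $\theta'(\nu)/\theta(\nu)$ modulo a rational function of $a$ by~\eqref{thetaD123c nu} and the evenness of $\theta$. Finally,~\eqref{x0 implicit eq},~\eqref{OmegaEllipticInt} and $c_0=\sqrt{x_0-x_2}/(4\textbf{K}(k))$ eliminate all complete elliptic integrals, turning each coefficient into the stated rational function of $x_0,x_1,x_2$ written through $q,\tilde q$ and $\Omega$.

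What then remains is to recognise the explicit $I_2(r)$ as the derivative in~\eqref{I2expression}. Using $\nu=-\frac{\Omega}{2\pi}r^{3/2}$, so that $\frac{d\nu}{dr}=\frac{3\nu}{2r}$ and $\frac{d}{dr}\big(\frac{\theta'(\nu)}{\theta(\nu)}\big)=\frac{3\nu}{2r}\big(\frac{\theta''(\nu)}{\theta(\nu)}-\frac{\theta'(\nu)^2}{\theta(\nu)^2}\big)$ (and similarly for $\frac{d}{dr}\big(\frac{\theta'(\nu)^3}{\theta(\nu)^3}\big)$), together with $\frac{da}{dr}=\frac{\sqrt{\mathcal{R}(a)}}{c_0}\frac{d\nu}{dr}$ and Proposition~\ref{theta'/theta derivative identity}, one checks that the terms of $I_2(r)$ carrying a factor $\theta''(\nu)/\theta(\nu)$ repackage exactly into $\frac{\tilde c_2(a)}{r^{3/2}}\frac{d}{dr}\big(\frac{\theta'(\nu)}{\theta(\nu)}\big)+\frac{\tilde c_3}{r^{3/2}}\frac{d}{dr}\big(\frac{\theta'(\nu)^3}{\theta(\nu)^3}\big)$, the spurious $(\theta'(\nu)/\theta(\nu))^2$ and $(\theta'(\nu)/\theta(\nu))^4$ contributions cancelling against the remaining terms; that the $\theta$-independent part equals $\tilde c_0/r=\frac{d}{dr}(\tilde c_0\log r)$; and that what is left is the integrand of~\eqref{I2expression} evaluated at $\tilde r=r$.

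The conceptual ingredients are entirely supplied by Propositions~\ref{prop: Abel map and theta function}--\ref{theta'/theta derivative identity} and Remark~\ref{OmegaRemark}; the main obstacle is the size and delicacy of the computation itself — assembling the eight coefficients with the correct $\pm$ choices of the signs in $\tilde f$ and $E_{x_j}$ on the two sides of the real axis, the clockwise orientation of the disk boundaries in~\eqref{large r asymptotics for R}, the sheet of $a$, and the conjugations by $P^{(\infty)}$ and $E_{x_j}$, and then forcing a large mixture of $\theta$-values and elliptic integrals into the stated rational closed forms. In practice this is best organised, and independently verified, with computer algebra.
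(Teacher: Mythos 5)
Your strategy matches the paper's: compute the Laurent/Taylor coefficients $(\mathcal{J}_{x_j})_{x_{j'}}^{(k)}$ by inserting \eqref{explicit expression for the jumps Jrp1p}, \eqref{def of P inf hat two cuts FINAL}, the model coefficients $\Phi_{\mathrm{Ai},1},\Phi_{\mathrm{Be},1}$ and the local expansions into \eqref{secontermexpression}; reduce all $\theta$-values to $\theta^{(m)}(0)$, $\theta^{(m)}(\nu)$, and rational/algebraic functions of $a$ and the branch points via Proposition~\ref{prop: Abel map and theta function}; and then absorb $\theta''(0)/\theta(0)$ and the $\theta''(\nu)$-type terms using Proposition~\ref{theta'/theta derivative identity}. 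This is exactly what the paper does, and the pieces you flag (evenness and quasi-periodicity of $\theta$, the orientation and signs near the disks, the sheet of $a$, and the role of $\sqrt{\mathcal{R}(a)}=c_0/\varphi'(a)$) are the ones that actually matter.

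Two small remarks on the final bookkeeping. First, your explanation of where the $\theta'(\nu)^3/\theta(\nu)^3$ term comes from (``$c_{\mathcal{G}}$ occurs cubically'') is not how the paper arrives at it: the paper's route is to use $\frac{\theta''(\nu)}{\theta(\nu)} = \big(\frac{\theta'(\nu)}{\theta(\nu)}\big)' + \frac{\theta'(\nu)^2}{\theta(\nu)^2}$ after the reduction, collect the $\theta'(\nu)^2/\theta(\nu)^2$ terms (which appear multiplied by the rational function $(\gamma a + \delta)/(a-x_2)$ from \eqref{rationalFunc}) together with a $\partial_r\nu$, and then recognise this block as $\frac{\tilde c_3}{r^{3/2}}\frac{d}{dr}\big[\theta'(\nu)^3/\theta(\nu)^3\big]$. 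Because the paper keeps $(\gamma a+\delta)/(a-x_2)$ as a single object rather than re-expanding it in powers of $\theta'/\theta$, it does not need to juggle a $(\theta'/\theta)^4$ cancellation as you suggest; this is a cleaner way to organise the step. Second, the reason $(\mathcal{J}_{x_j})_{x_1}^{(-2)}=(\mathcal{J}_{x_j})_{x_2}^{(-2)}=0$ is not that ``$\tilde f^{-1/2}$ is regular at $x_0$'' but that $\tilde f^{-1/2}\sim(z-x_{1,2})^{-1/2}$ combines with the $(z-x_{1,2})^{\mp 1/2}$ coming from the $\beta^{\sigma_3}$ conjugation in $P^{(\infty)}$ to give at worst a simple pole, whereas at $x_0$ the $f_0^{-3/2}$ (not $-1/2$) gives a double pole. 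These are both expositional glitches, not logical gaps, and the underlying computation you describe is the right one.
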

\begin{proof}
First, using \eqref{explicit expression for the jumps Jrp1p}, we compute the coefficients in the expansion of $J_{R}^{(1)}(z)$ as $z \to x_j$, $j = 1,2$. By combining these coefficients with the expressions for $E_{x_{j}}(x_{j})$ (already needed in Subsection \ref{subsection: third term}), this gives us expressions for the quantities $(\mathcal{J}_{x_{j}})_{x_{j}}^{(1)}$, $(\mathcal{J}_{x_{j}})_{x_0}^{(-2)}$ and $(\mathcal{J}_{x_{j}})_{x_{j'}}^{(-1)}$, for $j=1,2$ and $j'=0,1,2$, $j \neq j'$. These expressions contain the $\theta$-function and its derivatives evaluated at the eight points $0$, $\frac{1}{2}$, $\frac{\tau}{2}$, $\frac{1+\tau}{2}$, $\nu$, $\nu+\frac{1}{2}$, $\nu+\frac{\tau}{2}$ and $\nu+\frac{1+\tau}{2}$. Second, we use Proposition \ref{prop: Abel map and theta function} to express the quantities $\theta^{(j)}(\frac{1}{2})$, $\theta^{(j)}(\frac{\tau}{2})$, $\theta^{(j)}(\frac{1+\tau}{2})$, $\theta^{(j)}(\nu+\frac{1}{2})$, $\theta^{(j)}(\nu+\frac{\tau}{2})$ and $\theta^{(j)}(\nu+\frac{1+\tau}{2})$ in terms of only $a$, $\theta^{(j)}(0)$ and $\theta^{(j)}(\nu)$, $j\geq 0$. This allows us to obtain the following simplified expressions:
\begin{align*}
    (\mathcal{J}_{x_1})_{x_1}^{(1)}&=\frac{i\pi r^\frac{3}{2}}{-8}\Bigg[\frac{q'(x_1)}{q(x_1)}+\frac{4c_0^2\left(\frac{\theta'(\nu)}{\theta(\nu)}\right)^2+12c_0^2\left(\frac{\theta''(\nu)}{\theta(\nu)}-\frac{\theta''(0)}{\theta(0)}\right)+x_0-2x_1+x_2}{(x_0-x_1)(x_1-x_2)}\Bigg], \\
    (\mathcal{J}_{x_2})_{x_2}^{(1)}&=\frac{i\pi r^\frac{3}{2}}{8}\Bigg[\frac{q'(x_2)}{q(x_2)}-\frac{4c_0^2\left(\frac{\theta'(\nu)}{\theta(\nu)}\right)^2+12c_0^2\left(\frac{\theta''(\nu)}{\theta(\nu)}-\frac{\theta''(0)}{\theta(0)}\right)+x_0-2x_1+x_2}{(x_0-x_1)(x_1-x_2)}\Bigg],
\end{align*}
\begin{align*}
    &\left(\mathcal{J}_{x_1}\right)_{x_0}^{(-2)}=\frac{5\pi ir^\frac{3}{2}(x_0-x_1)(x_0-x_2)q(x_1)}{-24(a-x_2)\tilde{q}(x_0)}, & & \left(\mathcal{J}_{x_2}\right)_{x_0}^{(-2)}=\frac{5\pi ir^\frac{3}{2}(a-x_0)(x_0-x_2)q(x_2)}{24(a-x_2)\tilde{q}(x_0)},
\end{align*}
\begin{align*}
    &\left(\mathcal{J}_{x_2}\right)_{x_1}^{(-1)}=\frac{i\pi r^{\frac{3}{2}}(x_1-x_2)q(x_2)}{8q(x_1)}\left(1-\frac{x_1-x_2}{a-x_2}\right), & & \left(\mathcal{J}_{x_1}\right)_{x_2}^{(-1)}=\frac{i\pi r^\frac{3}{2}(x_1-x_2)\tilde{q}(x_1)}{8\tilde{q}(x_2)}\left(1-\frac{x_1-x_2}{a-x_2}\right).
\end{align*}
The expressions for $(\mathcal{J}_{x_j})_{x_0}^{(-1)}$, $j=1,2$, are significantly larger and also contain the quantities $\big(\frac{\theta'(\nu)}{\theta(\nu)}\big)^2$, $\frac{\theta''(\nu)}{\theta(\nu)}$, $\frac{\theta''(0)}{\theta(0)}$ and $a$. In addition, they contain terms involving $\sqrt{\mathcal{R}(a)}\frac{\theta'(\nu)}{\theta(\nu)}$. Third, we use \eqref{theta''/theta} to express $\frac{\theta''(0)}{\theta(0)}$ in terms of the $x_{j}$, and we also use the relation $\frac{\theta''(\nu)}{\theta(\nu)} = \big(\frac{\theta'(\nu)}{\theta(\nu)}\big)' + \frac{\theta'(\nu)^2}{\theta(\nu)^2}$. Collecting all of the terms involving $\frac{\theta'(\nu)^2}{\theta(\nu)^2}$, we obtain
\begin{align*}
\frac{3\tilde{c}_{3}}{r^{\frac{3}{2}}}\frac{\theta'(\nu)^2}{\theta(\nu)^2}
\frac{\gamma a + \delta}{a-x_2} \partial_{r}\nu,
\end{align*}
which can be rewritten as $\frac{\tilde{c}_3}{r^\frac{3}{2}}\frac{d}{dr}\left[\frac{\theta'(\nu)^3}{\theta(\nu)^3}\right]$ by using \eqref{rationalFunc}.
\end{proof}

The next proposition generalizes \cite[Proposition 8.6]{BCL19}.

\begin{proposition}\label{prop: 1periodic log int}
Let $M>0$ and let $\mathcal{H} \in L^1([0,1])$ be a periodic function of period $1$ satisfying $\mathcal{H}(-u) = \mathcal{H}(u)$ for all $u \in [0,1]$. As $r\to+\infty$,
\begin{align*}
\int_M^r \mathcal{H} (\tilde{\nu} )\frac{d\tilde{r}}{\tilde{r}}=\log(r)\int_{0}^{1}\mathcal{H}(s)ds+\tilde{C}+\bigO(r^{-\frac{3}{2}}),
\end{align*}
where $\tilde{\nu} = -\frac{\Omega}{2\pi}\tilde{r}^\frac{3}{2}$ and $\tilde{C}$ is a constant independent of $r$. In particular, there are no oscillations of order $1$.
\end{proposition}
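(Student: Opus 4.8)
The plan is to reduce the integral to a standard logarithmic average of a $1$-periodic function by the substitution $u=\frac{\Omega}{2\pi}\tilde{r}^{3/2}$. Since $\Omega>0$, this is an increasing bijection of $(M,r)$ onto $(u_M,u_r)$ with $u_M=\frac{\Omega}{2\pi}M^{3/2}$ and $u_r=\frac{\Omega}{2\pi}r^{3/2}$, and a direct computation gives $\frac{d\tilde{r}}{\tilde{r}}=\frac23\frac{du}{u}$. Moreover $\tilde{\nu}=-u$, so by the evenness hypothesis (together with $1$-periodicity, which we use to extend $\mathcal{H}$ to all of $\mathbb{R}$) we have $\mathcal{H}(\tilde{\nu})=\mathcal{H}(-u)=\mathcal{H}(u)$. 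Hence
\begin{align*}
\int_M^r \mathcal{H}(\tilde{\nu})\frac{d\tilde{r}}{\tilde{r}}=\frac23\int_{u_M}^{u_r}\mathcal{H}(u)\frac{du}{u},
\end{align*}
and since $u_r=\frac{\Omega}{2\pi}r^{3/2}$ it suffices to prove that $\int_{u_M}^{b}\mathcal{H}(u)\frac{du}{u}=(\log b)\int_0^1\mathcal{H}(s)\,ds+\mathrm{const}+\bigO(b^{-1})$ as $b\to+\infty$.

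Next I would split $\mathcal{H}=\bar{\mathcal{H}}+\mathcal{H}_0$ with $\bar{\mathcal{H}}=\int_0^1\mathcal{H}(s)\,ds$ and $\int_0^1\mathcal{H}_0=0$; note $\mathcal{H}_0\in L^1([0,1])$. The constant part contributes $\bar{\mathcal{H}}\int_{u_M}^{b}\frac{du}{u}=\bar{\mathcal{H}}(\log b-\log u_M)$, which produces the logarithmic term. For the zero-mean part, set $G(u)=\int_0^u\mathcal{H}_0(s)\,ds$; then $G$ is absolutely continuous and $G(u+1)=G(u)+\int_0^1\mathcal{H}_0=G(u)$, so $G$ is $1$-periodic and therefore bounded on $\mathbb{R}$. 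Integrating by parts,
\begin{align*}
\int_{u_M}^{b}\mathcal{H}_0(u)\frac{du}{u}=\frac{G(b)}{b}-\frac{G(u_M)}{u_M}+\int_{u_M}^{b}\frac{G(u)}{u^2}\,du.
\end{align*}
Because $G$ is bounded, $\frac{G(b)}{b}=\bigO(b^{-1})$ and $\int_b^{\infty}\frac{G(u)}{u^2}\,du=\bigO(b^{-1})$, so the right-hand side equals a constant independent of $b$ plus $\bigO(b^{-1})$. Combining the two parts gives $\int_{u_M}^{b}\mathcal{H}(u)\frac{du}{u}=\bar{\mathcal{H}}\log b+\tilde{C}_0+\bigO(b^{-1})$; substituting $b=u_r$, using $\log u_r=\tfrac32\log r+\log\tfrac{\Omega}{2\pi}$ and multiplying by $\tfrac23$ then yields the claim, with $\int_0^1\mathcal{H}(s)\,ds$ as the coefficient of $\log r$ and $\bigO(u_r^{-1})=\bigO(r^{-3/2})$ as the error.

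I expect no serious obstacle here: the argument is essentially routine. The one point requiring a little care — and the reason the statement gives $\bigO(r^{-3/2})$ rather than merely $o(\log r)$ as in the generic situation of \cite{DIZ} — is precisely the integration-by-parts step: it is crucial that the antiderivative $G$ of the zero-mean part $\mathcal{H}_0$ is $1$-periodic, hence bounded, so that both the boundary term and the tail of $\int G(u)/u^2\,du$ decay like $u^{-1}$. The decomposition of $\mathcal{H}$ into its mean plus a zero-mean part is exactly what makes this work. Finally, the assertion that there are no oscillations of order $1$ follows because the limiting constant $\tilde{C}$ and the coefficient $\int_0^1\mathcal{H}$ are both independent of $r$: the only remainder beyond these is the decaying $\bigO(r^{-3/2})$ term, so no bounded oscillatory term survives.
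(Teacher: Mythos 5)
Your argument is correct, and it reaches the stated conclusion with the right error rate $\bigO(r^{-3/2})$, but it is not the route the paper takes. The paper keeps the negative variable $\tilde{\nu}$, splits the integration range at consecutive integers $n_0,\dots,n_r$, writes the bulk as $\sum_j \int_0^{-1}\frac{\mathcal{H}(s)}{s+j}\,ds$, replaces $\frac{1}{s+j}$ by $\frac{1}{j}$ up to a summable error, and then invokes the asymptotics of the harmonic sum $\sum_j \frac{1}{j}$; the evenness of $\mathcal{H}$ is used only at the very end to convert $\int_0^{-1}\mathcal{H}$ into $-\int_0^1\mathcal{H}$. You instead use evenness (plus periodicity) at the outset to pass to the positive variable $u=\frac{\Omega}{2\pi}\tilde{r}^{3/2}$, then split $\mathcal{H}$ into its mean plus a zero-mean part and integrate the latter by parts against the bounded, $1$-periodic antiderivative $G$. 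Both proofs are elementary and hinge on the same substitution turning $\frac{d\tilde{r}}{\tilde{r}}$ into $\frac{2}{3}\frac{du}{u}$; yours isolates more transparently why the remainder is $\bigO(r^{-3/2})$ rather than merely $o(\log r)$ (the boundary term $G(b)/b$ and the tail $\int_b^\infty G(u)u^{-2}du$ both decay like $b^{-1}$ with $b\asymp r^{3/2}$), while the paper's interval-splitting argument is closer in spirit to a Riemann-sum comparison with harmonic numbers and generalizes directly to the way such sums are handled elsewhere in that literature. As you note, your steps are all justified for $\mathcal{H}\in L^1([0,1])$: $G$ is absolutely continuous and periodic because the zero-mean part integrates to zero over a period, so the integration by parts is legitimate.
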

\begin{proof}
Recall that $\Omega > 0$. Hence, 
\begin{align*}
\int_M^r  \mathcal{H}\left(-\frac{\Omega}{2\pi}\tilde{r}^\frac{3}{2}\right)\frac{d\tilde{r}}{\tilde{r}}=\frac{2}{3}\int_{M'}^\nu \mathcal{H}(\tilde{\nu})\frac{d\tilde{\nu}}{\tilde{\nu}}=\frac{2}{3}\left(\int_{M'}^{n_0}+\int_{n_r}^\nu\right)\mathcal{H}(\tilde{\nu})\frac{d\tilde{\nu}}{\tilde{\nu}}+\frac{2}{3}\sum_{j=n_0}^{n_r-1}\int_0^{-1}\frac{\mathcal{H}(s)}{s+j}ds,
\end{align*}
where $M'=-\frac{\Omega}{2\pi}M^\frac{3}{2}<0$, $n_0=\left\lfloor{M'}\right\rfloor$ and $n_r=\left\lceil{\nu}\right\rceil$. Because $\mathcal{H}\in L^1([0,1])$ and $j \leq n_{0}<0$, we have
\begin{align*}
& \frac{2}{3}\left(\int_{M'}^{n_0}+\int_{n_r}^\nu\right)\mathcal{H}(\tilde{\nu})\frac{d\tilde{\nu}}{\tilde{\nu}}=\tilde{C}_{1}+\bigO(r^{-\frac{3}{2}}), \\
& \sum_{j=n_0}^{n_r-1} \int_0^{-1}\frac{\mathcal{H}(s)}{s+j}ds= \sum_{j=n_0}^{n_r-1}\frac{1}{j}\int_0^{-1} \mathcal{H}(s) ds+\tilde{C}_{2}+\bigO(r^{-3}),
\end{align*}
as $r \to + \infty$, for certain constants $\tilde{C}_{1}$, $\tilde{C}_{2}$. Finally, since $\mathcal{H}(-s)=\mathcal{H}(s)$ and $n_{r}<0$, we deduce that
\begin{align*}
\int_0^{-1} \mathcal{H}(s)ds\sum_{j=n_0}^{n_r-1}\frac{1}{j}=\frac{3}{2} \log r \int_0^1 \mathcal{H}(s)ds+\tilde{C}_{3} + \bigO(r^{-\frac{3}{2}}), \qquad \mbox{as } r \to + \infty,
\end{align*}
for some constant $\tilde{C}_{3}$.
\end{proof}

We next extract the leading order behavior from Proposition \ref{prop: second term exact}.
\begin{proposition}\label{prop: c2 not simplified}
Fix $M>0$. As $r\to+\infty$,
\begin{align}\label{secontermexpression 2}
    \int_M^r I_2(\tilde{r}) d\tilde{r}=c_2\log(r)+\tilde{C}_4+\bigO ( r^{-\frac{3}{2}} ),
\end{align}
where $\tilde{C}_4$ is independent of $r$ and
\begin{align}\label{c2}
    c_2= &\; \tilde{c}_0+\frac{\tilde{c}_{-1}(3x_0-x_1+x_2)}{(x_1-x_2)(x_0+x_1+x_2)}+\tilde{c}_{-2}\frac{2x_0^2+x_0(x_1-3x_2)-(x_1-x_2)^2}{(x_0-x_2)(x_1-x_2)^2(x_0+x_1+x_2)} \nonumber \\
    & -\frac{7}{6}\int_{x_0}^\infty \tilde{c}_1(x)\frac{\theta'(\varphi(x))}{\theta(\varphi(x))}\varphi'(x)dx,
\end{align}
with the integration contour from $x_0$ to $\infty$ taken on the upper sheet of $X$.
\end{proposition}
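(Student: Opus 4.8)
The plan is to integrate the exact formula for $I_2$ furnished by Proposition \ref{prop: second term exact}. By the fundamental theorem of calculus, $\int_M^r I_2(\tilde r)\,d\tilde r$ equals $\tilde c_0\log r$ plus $\int_M^r$ of the five terms appearing inside the bracket of \eqref{I2expression}, plus a constant. Since the $\tilde c_0\log r$ term is already in the desired form, the task is to extract from each of the remaining four integrand terms its contribution to the coefficient of $\log r$, up to constants and $\bigO(r^{-\frac32})$.

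First I would dispose of the two terms carrying the prefactor $\tilde r^{-\frac32}$. In the $\tilde c_3$–term, $\tilde c_3$ is a constant, so an integration by parts throws $\tilde r^{-\frac32}$ onto the differential: the boundary contributions are $\bigO(r^{-\frac32})$ at the top endpoint (because $\theta'/\theta$ is bounded on $\mathbb{R}$, $\theta$ having no real zeros since $\tau\in i\mathbb{R}^{+}$) and a constant at $M$, while the leftover integrand is $\bigO(\tilde r^{-\frac52})$, hence absolutely integrable; no $\log r$ is produced. In the $\tilde c_2(\tilde a)$–term I would instead use $\partial_{\tilde r}\tilde\nu=-\frac{3\Omega}{4\pi}\sqrt{\tilde r}$ to rewrite it as $\tilde r^{-1}\mathcal{H}(\tilde\nu)$ with $\mathcal{H}(\tilde\nu)=-\frac{3\Omega}{4\pi}\tilde c_2(\tilde a)\,(\theta'/\theta)'(\tilde\nu)$, so that it joins the terms of the form $\tilde r^{-1}\mathcal{H}_i(\tilde\nu)$ alongside the $\tilde c_{-2}$–, $\tilde c_{-1}$– and $\tilde c_1$–terms.

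For each term of the form $\tilde r^{-1}\mathcal{H}_i(\tilde\nu)$ I would check the hypotheses of Proposition \ref{prop: 1periodic log int} and apply it. The $1$-periodicity of $\mathcal{H}_i$ in $\tilde\nu$ follows from $\theta(z+1)=\theta(z)$ and from $\varphi_A^{-1}(\tilde\nu+1)=\varphi_A^{-1}(\tilde\nu)$, so that $\tilde a(\tilde\nu)$ is $1$-periodic; the symmetry $\mathcal{H}_i(-\tilde\nu)=\mathcal{H}_i(\tilde\nu)$ follows from $\varphi_A^{-1}(-\tilde\nu)=\iota(\tilde a(\tilde\nu))$, from the $\iota$-invariance of rational functions of the base coordinate, from $\sqrt{\mathcal R}\circ\iota=-\sqrt{\mathcal R}$, and from the parities $\theta(-z)=\theta(z)$, $(\theta'/\theta)(-z)=-(\theta'/\theta)(z)$; and integrability on $[0,1]$ holds because $\tilde a(\tilde\nu)$ oscillates over $[x_0,+\infty]$ (in particular stays bounded away from $x_2$), with any apparent singularity as $\tilde a\to x_0$ or $\tilde a\to\infty$ controlled by the vanishing, resp.\ explicit growth, of $\sqrt{\mathcal R(\tilde a)}$ and by $\theta'(0)=\theta'(\tfrac12)=0$. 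Proposition \ref{prop: 1periodic log int} then yields $\int_M^r\tilde r^{-1}\mathcal{H}_i(\tilde\nu)\,d\tilde r=(\log r)\int_0^1\mathcal{H}_i(s)\,ds+\mathrm{const}+\bigO(r^{-\frac32})$, with no oscillation of order $1$. It remains to evaluate the period integrals. On $(0,\tfrac12)$ one has $s=\varphi(\tilde a)$ with $\tilde a$ sweeping $(x_0,\infty)$ on the first sheet and $ds=\varphi'(\tilde a)\,d\tilde a$; by evenness, $\int_0^1\mathcal{H}_i(s)\,ds$ becomes twice an integral over $(x_0,\infty)$ on the upper sheet. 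For the $\tilde c_{-1}$– and $\tilde c_{-2}$–terms these are complete elliptic integrals of $\frac{dx}{(x-x_2)\sqrt{\mathcal R(x)}}$ and $\frac{dx}{(x-x_2)^2\sqrt{\mathcal R(x)}}$, which I would evaluate in closed form using the reduction formulas of \cite{GRtable}, the relation \eqref{x0 implicit eq}, and $\theta'(0)=\theta'(\tfrac12)=0$ together with \eqref{rationalFunc}; e.g.\ $2c_0\int_{x_0}^\infty\frac{dx}{(x-x_2)\sqrt{\mathcal R(x)}}=-\frac{\gamma}{\gamma x_2+\delta}=\frac{3x_0-x_1+x_2}{(x_1-x_2)(x_0+x_1+x_2)}$, the rational prefactor of $\tilde c_{-1}$ in \eqref{c2}. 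For the $\tilde c_1$– and $\tilde c_2$–terms the substitution produces multiples of $\int_{x_0}^\infty\tilde c_1(x)\frac{\theta'(\varphi(x))}{\theta(\varphi(x))}\varphi'(x)\,dx$ on the upper sheet; here one uses the elementary identity $\tilde c_2'(x)=\frac{5\pi}{9\Omega}\tilde c_1(x)\varphi'(x)$ (read off from the explicit formulas in Proposition \ref{prop: second term exact}), together with an integration by parts whose boundary terms vanish because $\theta'(0)=\theta'(\tfrac12)=0$, to feed both contributions into that same integral, which combines with total coefficient $-\frac76$.

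The main obstacle will be the bookkeeping of this last step: the parity, periodicity and integrability checks must be carried out for every $\mathcal{H}_i$, including those hidden inside $\tilde c_1,\tilde c_2$ through $\tilde a$ and $\sqrt{\mathcal R(\tilde a)}$; the several elliptic integrals must be reduced and the rational prefactors of $\tilde c_{-1},\tilde c_{-2}$ in \eqref{c2} matched exactly; and the $\theta'/\theta$-contributions of the $\tilde c_1$– and $\tilde c_2$–terms must be combined with the correct rational weight $-\frac76$. The analytic ingredients — absolute convergence of the $\tilde r^{-\frac52}$ tails and the applicability of Propositions \ref{prop: 1periodic log int} and \ref{theta'/theta derivative identity} — are routine by comparison.
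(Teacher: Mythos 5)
Your proposal is correct and tracks the paper's proof closely. Both proofs dispose of the $\tilde{c}_3$ term by integration by parts, recognize that the $\tilde{c}_{-2},\tilde{c}_{-1},\tilde{c}_1$ terms are of the form $\tilde{r}^{-1}\mathcal{H}(\tilde\nu)$ to which Proposition \ref{prop: 1periodic log int} applies, and evaluate the resulting period integrals by the substitution $s=\varphi(x)$, reducing the $\tilde{c}_1$- and $\tilde{c}_2$-contributions to the single integral $\int_{x_0}^\infty\tilde{c}_1(x)\frac{\theta'(\varphi(x))}{\theta(\varphi(x))}\varphi'(x)dx$ with total weight $-\frac{7}{6}$. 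The one genuine (but equivalent) variation is your handling of the $\tilde{c}_2$ term: you apply the chain rule $\frac{d}{d\tilde r}\big(\frac{\theta'}{\theta}\big)(\tilde\nu)=\big(\frac{\theta'}{\theta}\big)'(\tilde\nu)\,\partial_{\tilde r}\tilde\nu$ first, obtaining a $\tilde{r}^{-1}\mathcal{H}(\tilde\nu)$ term, then integrate by parts inside the period integral $\int_0^1\mathcal{H}(s)\,ds$ using the boundary conditions $\theta'(0)=\theta'(\tfrac12)=0$; the paper instead integrates by parts in $\tilde{r}$ first, rewriting the $\tilde{c}_2$ term as $-\frac{5}{12}$ times the $\tilde{c}_1$ term plus a negligible $\tilde{r}^{-5/2}$ tail, and only then applies Proposition \ref{prop: 1periodic log int}. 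Both routes lean on the same underlying fact $\tilde{c}_2'=\frac{5\pi}{9\Omega}\tilde{c}_1\varphi'$, and both are valid; yours is marginally cleaner in that it treats all four non-constant terms uniformly as $\tilde{r}^{-1}\mathcal{H}_i(\tilde\nu)$ before invoking the ergodic lemma. Your derivation of $2\int_{x_0}^\infty\frac{\varphi'(\xi)d\xi}{\xi-x_2}=-\frac{\gamma}{\gamma x_2+\delta}$ by integrating \eqref{rationalFunc} from $x_0$ to $\infty$ and using $\theta'(0)=\theta'(\tfrac12)=0$ is a nice alternative to the paper's direct citation of Gradshteyn--Ryzhik reduction formulas; be aware, though, that the analogous evaluation of $\int_{x_0}^\infty\frac{\varphi'(\xi)d\xi}{(\xi-x_2)^2}$ is not immediate from \eqref{rationalFunc} alone, so you would still need the standard elliptic-integral reductions there. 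Finally, as you yourself flag, the sign bookkeeping (the direction of the $\tilde\nu$-parametrization, the sheet on which $\sqrt{\mathcal{R}(\tilde{a})}$ is positive, and the orientation of the period integral) requires care and is where a careless implementation would most likely lose a factor of $-1$; the paper's own intermediate signs (e.g.\ the $-2$ in the $\tilde{c}_1$ period integral) reflect these conventions.
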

\begin{proof}
The first term on the right-hand side of \eqref{c2} follows trivially from (\ref{I2expression}). 
Since $u \mapsto \varphi_{A}^{-1}(u)$ is even and periodic of period $1$, we can apply Proposition \ref{prop: 1periodic log int} with $\mathcal{H}(\tilde{\nu})=\frac{\tilde{c}_{-2}}{(\varphi_{A}^{-1}(\tilde{\nu})-x_2)^2}+\frac{\tilde{c}_1}{(\varphi_{A}^{-1}(\tilde{\nu})-x_2)}$. Moreover, with the help of \cite[Eqs. 3.133.18, 3.134.18]{GRtable}, \eqref{x0 implicit eq}, and \eqref{def of u}, we compute
\begin{subequations}\label{dsintegrals}
\begin{align}
    \int_0^1\frac{ds}{\varphi_{A}^{-1}(s)-x_2}&=2\int_{x_0}^\infty\frac{\varphi'(\xi) d\xi}{\xi-x_2}=\frac{3x_0-x_1+x_2}{(x_1-x_2)(x_0+x_1+x_2)}, 
    	\\
    \int_0^1\frac{ds}{(\varphi_{A}^{-1}(s)-x_2)^2}&=2\int_{x_0}^\infty\frac{\varphi'(\xi) d\xi}{(\xi-x_2)^2}=\frac{2x_0^2+x_0(x_1-3x_2)-(x_1-x_2)^2}{(x_0-x_2)(x_1-x_2)^2(x_0+x_1+x_2)},
\end{align}
\end{subequations}
where the integration contours in the integrals with respect to $d\xi$ lie on the upper sheet.
Using these identities to evaluate $\int_0^1 \mathcal{H}(s)ds$ explicitly, we obtain the second and third terms on the right-hand side of \eqref{c2}.
Next, we use Proposition \ref{prop: 1periodic log int} with $\mathcal{H}(\tilde{\nu}) = \tilde{c}_1(\tilde{a}) \frac{\theta'(\tilde{\nu})}{\theta(\tilde{\nu})}$ to obtain
\begin{align}\label{term 4 first part}
    \int_M^r \frac{\tilde{c}_1(\tilde{a})}{\tilde{r}} \frac{\theta'(\tilde{\nu})}{\theta(\tilde{\nu})} d\tilde{r}=-2\log(r)\int_{x_0}^\infty\tilde{c}_1(x)\frac{\theta'(\varphi(x))}{\theta(\varphi(x))}\varphi'(x)dx+\tilde{C}_{5} + \bigO(r^{-\frac{3}{2}}),
\end{align}
where the contour in the $x$-integral lies on the upper sheet. Furthermore, integrating by parts and then using Proposition \ref{prop: 1periodic log int} again, we find
\begin{align}
    \int_M^r\frac{\tilde{c}_2(a)}{\tilde{r}^\frac{3}{2}}\frac{d}{d\tilde{r}}\left[\frac{\theta'(\tilde{\nu})}{\theta(\tilde{\nu})}\right]d\tilde{r}&=-\frac{5}{12}\int_M^r\frac{\tilde{c}_1(a)}{\tilde{r}} \frac{\theta'(\tilde{\nu})}{\theta(\tilde{\nu})} d\tilde{r}+\frac{3}{2}\int_M^r\frac{\tilde{c}_2(a)}{\tilde{r}^\frac{5}{2}}\frac{\theta'(\tilde{\nu})}{\theta(\tilde{\nu})}d\tilde{r} \nonumber \\
    &=\frac{5}{6}\log(r)\int_{x_0}^\infty\tilde{c}_1(x)\frac{\theta'(\varphi(x))}{\theta(\varphi(x))}\varphi'(x)dx+\tilde{C}_{6} + \bigO ( r^{-\frac{3}{2}} ), \label{term 4 second part}
\end{align}
where again the contour in the $x$-integral lies on the upper sheet.
We obtain the last term on the right-hand side of \eqref{c2} by adding \eqref{term 4 first part} and \eqref{term 4 second part}.  Lastly, an integration by parts shows that
\begin{align*}
    \int_M^r\frac{\tilde{c}_3}{\tilde{r}^\frac{3}{2}}\frac{d}{d\tilde{r}}\left[\frac{\theta'(\tilde{\nu})^3}{\theta(\tilde{\nu})^3}\right]d\tilde{r}=\tilde{C}_{7} + \bigO(r^{-\frac{3}{2}}),
\end{align*}
which completes the proof.
\end{proof}

We finally simplify the constant $c_2$.
\begin{proposition}\label{prop: c2 is -3/8}
We have $c_{2} = -\frac{3}{8}$.
\end{proposition}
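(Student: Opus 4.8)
The plan is to reduce every term in \eqref{c2} to an explicit rational function of $x_0,x_1,x_2$ and then verify the resulting algebraic identity, following the strategy used for the edge case in \cite{BCL19}. The only non-elementary ingredient in \eqref{c2} is the integral $\int_{x_0}^{\infty}\tilde{c}_1(x)\tfrac{\theta'(\varphi(x))}{\theta(\varphi(x))}\varphi'(x)\,dx$ (with the contour on the upper sheet, where $\sqrt{\mathcal{R}(x)}>0$ on $(x_0,\infty)$). First I would use $\varphi'(z)=c_0/\sqrt{\mathcal{R}(z)}$ together with the explicit form of $\tilde{c}_1$ from Proposition \ref{prop: second term exact} to write
\begin{align*}
\tilde{c}_1(x)\varphi'(x)=\frac{c_0^{2}\big(x_1\tilde{q}(x_1)-x_2\tilde{q}(x_2)\big)}{\tilde{q}(x_0)(x_0-x_1)}\,\frac{1}{(x-x_2)^{2}},
\end{align*}
so that the branch cut $\sqrt{\mathcal{R}}$ disappears from the integrand and the sheet convention enters only through this sign.

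Next I would integrate by parts with $dv=(x-x_2)^{-2}dx$, $v=-(x-x_2)^{-1}$, and $u=\theta'(\varphi(x))/\theta(\varphi(x))$, whose derivative is $\frac{\gamma x+\delta}{x-x_2}\varphi'(x)$ by \eqref{rationalFunc}. Both boundary terms vanish: at $x=x_0$ because $\varphi(x_0)=0$ and $\theta$ is even so $\theta'(0)=0$, and at $x=\infty$ because $(x-x_2)^{-1}\to 0$ while $\theta'(\varphi(\infty))/\theta(\varphi(\infty))=\theta'(\tfrac12)/\theta(\tfrac12)$ is finite. Hence the integral reduces to $\int_{x_0}^{\infty}\frac{\gamma x+\delta}{(x-x_2)^2}\varphi'(x)\,dx$, which splits via $\frac{x}{(x-x_2)^2}=\frac{1}{x-x_2}+\frac{x_2}{(x-x_2)^2}$ into $\gamma\int_{x_0}^{\infty}\frac{\varphi'(x)}{x-x_2}dx+(\gamma x_2+\delta)\int_{x_0}^{\infty}\frac{\varphi'(x)}{(x-x_2)^2}dx$; both of these are evaluated in closed form in \eqref{dsintegrals}. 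Since $\gamma$ and $\delta$ carry a factor $c_0^{-2}$, it cancels the $c_0^{2}$ produced in the first step, and the whole integral becomes an explicit rational function of $x_0,x_1,x_2$ (recall $q_0=\tfrac{x_0}{2}(x_0-x_1-x_2)$, so $q,\tilde q$ are polynomials in the $x_j$).

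Substituting this, along with the explicit $\tilde{c}_0,\tilde{c}_{-1},\tilde{c}_{-2}$ of Proposition \ref{prop: second term exact}, into \eqref{c2} turns $c_2$ into a single rational function of $x_0,x_1,x_2$, and it remains to check it equals $-\tfrac38$. This final step is a direct but lengthy simplification (most conveniently carried out with computer algebra): one clears the common denominators $\tilde q(x_0)$, $q(x_1)$, $\tilde q(x_2)$, $x_0-x_1$, $x_1-x_2$, $x_0-x_2$, $x_0+x_1+x_2$ and verifies the polynomial numerator identity, using where needed the rational-function identities already established (the forms of $\gamma,\delta$ and \eqref{theta''/theta}, which incorporate the relation $\mathcal{F}(x_0)=0$). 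The main obstacle is precisely this bookkeeping: the ingredients feeding $\tilde{c}_{-1}$ and $\tilde{c}_0$ — in particular the coefficients $(\mathcal{J}_{x_j})_{x_0}^{(-1)}$ mentioned but not displayed in the proof of Proposition \ref{prop: second term exact} — are bulky, and one must track the sheet conventions and the several $\theta$-identities of Propositions \ref{prop: Abel map and theta function}--\ref{theta'/theta derivative identity} carefully. Conceptually, however, no new idea is required beyond this reduction: identity \eqref{rationalFunc} does all the work by converting the $\theta$-integral into the elementary integrals \eqref{dsintegrals}.
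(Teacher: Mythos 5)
Your proposal is correct and follows essentially the same route as the paper: reduce $\tilde c_1(x)\varphi'(x)$ to the rational function $\frac{c_0^2(x_1\tilde q(x_1)-x_2\tilde q(x_2))}{\tilde q(x_0)(x_0-x_1)(x-x_2)^2}$, integrate by parts using \eqref{rationalFunc} (with the boundary terms vanishing because $\theta'(0)=0$ and $(x-x_2)^{-1}\to 0$), evaluate the resulting elementary integrals via \eqref{dsintegrals}, and then verify a purely rational identity in $x_0,x_1,x_2$. The paper's proof is terser but identical in substance; your added justification of the vanishing boundary terms is a welcome clarification rather than a divergence.
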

\begin{proof}
We start by evaluating the integral in \eqref{c2}, which can be written as
\begin{align*}
    \int_{x_0}^\infty\tilde{c}_1(x)\frac{\theta'(\varphi(x))}{\theta(\varphi(x))}\varphi'(x)dx=\frac{c_0^2(x_1\tilde{q}(x_1)-x_2\tilde{q}(x_2))}{\tilde{q}(x_0)(x_0-x_1)}\int_{x_0}^\infty\frac{\theta'(\varphi(x))}{\theta(\varphi(x))}\frac{dx}{(x-x_2)^2}.
\end{align*}
Using Proposition \ref{theta'/theta derivative identity}, an integration by parts gives
\begin{align*}
    \int_{x_0}^\infty\frac{\theta'(\varphi(x))}{\theta(\varphi(x))}\frac{dx}{(x-x_2)^2}&= \int_{x_0}^\infty\frac{\gamma x + \delta}{\sqrt{\mathcal{R}(x)}}\frac{dx}{(x-x_2)^2}.
\end{align*}
Computing the integral on the right-hand side as in (\ref{dsintegrals}), we infer that
\begin{align*}
    \int_{x_0}^\infty\tilde{c}_1(x)\frac{\theta'(\varphi(x))}{\theta(\varphi(x))}\varphi'(x)dx=\frac{x_0(7x_0-2x_1-2x_2)-(x_1-x_2)^2}{8(x_0+x_1+x_2)(3x_0-x_1-x_2)}
\end{align*}
and substituting this into \eqref{c2}, we obtain the result.
\end{proof}

Theorem \ref{thm: main result} follows by combining \eqref{partialrlogdet}, (\ref{I1final}), \eqref{third term simple sum and c3}, \eqref{secontermexpression 2}, and Proposition \ref{prop: c2 is -3/8}.

\appendix

\section{Airy model RH problem}\label{subsec:Airyparametrix}
The Airy model RH problem consists of finding a function $\Phi_{\mathrm{Ai}}$ satisfying the following properties.
\begin{itemize}
\item[(a)] $\Phi_{\mathrm{Ai}} : \mathbb{C} \setminus \Sigma_{\mathrm{Ai}} \rightarrow \mathbb{C}^{2 \times 2}$ is analytic, where $\Sigma_{\mathrm{Ai}}$ is shown in Figure \ref{figAiry}.
\item[(b)] $\Phi_{\mathrm{Ai}}$ has the jump relations
\begin{equation}\label{jumps P3}
\begin{array}{l l}
\Phi_{\mathrm{Ai},+}(z) = \Phi_{\mathrm{Ai},-}(z) \begin{pmatrix}
0 & 1 \\ -1 & 0
\end{pmatrix}, & \mbox{ on } \mathbb{R}^{-}, \\

\Phi_{\mathrm{Ai},+}(z) = \Phi_{\mathrm{Ai},-}(z) \begin{pmatrix}
 1 & 1 \\
 0 & 1
\end{pmatrix}, & \mbox{ on } \mathbb{R}^{+}, \\

\Phi_{\mathrm{Ai},+}(z) = \Phi_{\mathrm{Ai},-}(z) \begin{pmatrix}
 1 & 0  \\ 1 & 1
\end{pmatrix}, & \mbox{ on } e^{ \frac{2\pi i}{3} }  \mathbb{R}^{+} , \\

\Phi_{\mathrm{Ai},+}(z) = \Phi_{\mathrm{Ai},-}(z) \begin{pmatrix}
 1 & 0  \\ 1 & 1
\end{pmatrix}, & \mbox{ on }e^{ -\frac{2\pi i}{3} }\mathbb{R}^{+} . \\
\end{array}
\end{equation}
\item[(c)] As $z \to \infty$, $z \notin \Sigma_{\mathrm{Ai}}$, we have
\begin{equation}\label{Asymptotics Airy}
\Phi_{\mathrm{Ai}}(z) = z^{-\frac{\sigma_{3}}{4}}M \left( I + \sum_{k=1}^{\infty} \frac{\Phi_{\mathrm{Ai,k}}}{z^{3k/2}} \right) e^{-\frac{2}{3}z^{\frac{3}{2}}\sigma_{3}},
\end{equation}
where $M = \frac{1}{\sqrt{2}}\begin{pmatrix}
1 & i \\ i & 1
\end{pmatrix}$ and $\Phi_{\mathrm{Ai,1}} = \frac{1}{8}\begin{pmatrix}
\frac{1}{6} & i \\ i & -\frac{1}{6}
\end{pmatrix}$.

\item[(d)] As $z \to 0$, we have $\Phi_{\mathrm{Ai}}(z) = \bigO(1)$.
\end{itemize}
The unique solution $\Phi_{\mathrm{Ai}}$ can be written in terms of Airy functions \cite{DKMVZ1}, but its exact expression is unimportant for us. The quantity we need is the explicit expression for $\Phi_{\mathrm{Ai},1}$.

\begin{figure}[t]
    \begin{center}
    \setlength{\unitlength}{1truemm}
    \begin{picture}(100,55)(0,0)
        \put(50,40){\line(1,0){30}}
        \put(50,40){\line(-1,0){30}}
        \put(50,39.8){\thicklines\circle*{1.2}}
        \put(50,40){\line(-0.5,0.866){10}}
        \put(50,40){\line(-0.5,-0.866){10}}
        \qbezier(53,40)(52,43)(48.5,42.598)
        \put(53,43){$\frac{2\pi}{3}$}
        \put(50.3,36.8){$0$}
        \put(65,39.9){\thicklines\vector(1,0){.0001}}
        \put(35,39.9){\thicklines\vector(1,0){.0001}}
        \put(44.35,49.588){\thicklines\vector(0.5,-0.866){.0001}}
        \put(44.35,30.412){\thicklines\vector(0.5,0.866){.0001}}
        %\put(60,60){$\Sigma_{\widehat{\Psi}}$}
    \end{picture}
    \vspace{-2.5cm}\caption{\label{figAiry}The jump contour $\Sigma_{\mathrm{Ai}}$.}
\end{center}
\end{figure}

\section{Bessel model RH problem}\label{subsec:Besselparametrix}
The Bessel model RH problem consists of finding a function $\Phi_{\mathrm{Be}}$ satisfying the following properties.
\begin{itemize}
\item[(a)] $\Phi_{\mathrm{Be}} : \mathbb{C} \setminus \Sigma_{\mathrm{Be}} \to \mathbb{C}^{2\times 2}$ is analytic, where
$\Sigma_{\mathrm{Be}}$ is shown in Figure \ref{figBessel}.
\item[(b)] $\Phi_{\mathrm{Be}}$ satisfies the jump conditions
\begin{equation}\label{Jump for P_Be}
\begin{array}{l l} 
\Phi_{\mathrm{Be},+}(z) = \Phi_{\mathrm{Be},-}(z) \begin{pmatrix}
0 & 1 \\ -1 & 0
\end{pmatrix}, & z \in \mathbb{R}^{-}, \\

\Phi_{\mathrm{Be},+}(z) = \Phi_{\mathrm{Be},-}(z) \begin{pmatrix}
1 & 0 \\ 1 & 1
\end{pmatrix}, & z \in e^{ \frac{2\pi i}{3} }  \mathbb{R}^{+}, \\

\Phi_{\mathrm{Be},+}(z) = \Phi_{\mathrm{Be},-}(z) \begin{pmatrix}
1 & 0 \\ 1 & 1
\end{pmatrix}, & z \in e^{ -\frac{2\pi i}{3} }  \mathbb{R}^{+}. \\
\end{array}
\end{equation}
\item[(c)] As $z \to \infty$, $z \notin \Sigma_{\mathrm{Be}}$, we have
\begin{equation}\label{large z asymptotics Bessel}
\Phi_{\mathrm{Be}}(z) = ( 2\pi z^{\frac{1}{2}} )^{-\frac{\sigma_{3}}{2}}M
\left(I+ \frac{\Phi_{\mathrm{Be},1}}{z^{\frac{1}{2}}} + \bigO(z^{-1}) \right) e^{2 z^{\frac{1}{2}}\sigma_{3}},
\end{equation}
where $\Phi_{\mathrm{Be},1} = \frac{1}{16}\begin{pmatrix}
-1 & -2i \\ -2i & 1
\end{pmatrix}$ and $M = \frac{1}{\sqrt{2}}\begin{pmatrix}
1 & i \\ i & 1
\end{pmatrix}$.
\item[(d)] As $z \to 0$, we have
\begin{equation}\label{local behavior near 0 of P_Be}
\Phi_{\mathrm{Be}}(z) = \left\{ \begin{array}{l l}
\begin{pmatrix}
\bigO(1) & \bigO(|\log z|) \\
\bigO(z) & \bigO(|\log z|) 
\end{pmatrix}, & |\arg z| < \frac{2\pi}{3}, \\
\begin{pmatrix}
\bigO(|\log z|) & \bigO(|\log z|) \\
\bigO(|\log z|) & \bigO(|\log z|) 
\end{pmatrix}, & \frac{2\pi}{3}< |\arg z| < \pi.
\end{array}  \right.
\end{equation}
\end{itemize}
\begin{figure}[t]
    \begin{center}
    \setlength{\unitlength}{1truemm}
    \begin{picture}(100,55)(0,0)
        \put(50,40){\line(-1,0){30}}
        \put(50,39.8){\thicklines\circle*{1.2}}
        \put(50,40){\line(-0.5,0.866){10}}
        \put(50,40){\line(-0.5,-0.866){10}}
        \put(50.3,36.8){$0$}
        \put(35,39.9){\thicklines\vector(1,0){.0001}}
        \put(44.35,49.588){\thicklines\vector(0.5,-0.866){.0001}}
        \put(44.35,30.412){\thicklines\vector(0.5,0.866){.0001}}
        %\put(60,60){$\Sigma_{\widehat{\Psi}}$}
    \end{picture}
    \vspace{-2.5cm}\caption{\label{figBessel}The jump contour $\Sigma_{\mathrm{Be}}$.}
\end{center}
\end{figure}
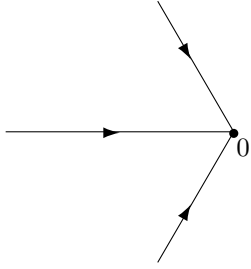
The unique solution $\Phi_{\mathrm{Be}}$ is given in terms of Bessel functions \cite{DIZ}, but its exact expression is unimportant for our purposes. Again, the quantity we need is the explicit expression for $\Phi_{\mathrm{Be},1}$.

%%%%%%%%%%%%%%     REFERENCES     %%%%%%%%%%%%%%

\end{document}